\documentclass{article}
\usepackage{geometry}
\usepackage{graphicx} 

\usepackage[colorlinks = true,
            linkcolor = blue,
            urlcolor  = blue,
            citecolor = blue,
            anchorcolor = blue]{hyperref}

\usepackage{amsmath,amssymb,amsthm}
\usepackage{nccmath,mathtools,bbm,mathrsfs}
\newcommand\numberthis{\addtocounter{equation}{1}\tag{\theequation}}

\usepackage[ruled,linesnumbered]{algorithm2e}

\usepackage[noblocks]{authblk}

\usepackage{comment}

\usepackage[noadjust]{cite}

\usepackage[capitalise]{cleveref}
\crefname{equation}{}{}

\newtheorem{lemma}{Lemma}
\newtheorem{corollary}{Corollary}
\newtheorem{theorem}{Theorem}
\newtheorem{proposition}{Proposition}

\theoremstyle{definition}
\newtheorem{assumption}{Assumption}

\theoremstyle{remark}
\newtheorem{remark}{Remark}

\crefname{assumption}{Assumption}{Assumptions}

\newenvironment{theorembis}[1]
  {\renewcommand{\thetheorem}{\ref*{#1}$'$}%
   \addtocounter{theorem}{-1}%
   \begin{theorem}}
  {\end{theorem}}

\usepackage{enumitem}
\setlist{topsep=0pt,itemsep=0pt,parsep=3pt,labelwidth=7pt,labelsep=8pt,leftmargin=\parindent,font=\upshape}

\DeclareMathOperator*{\argmin}{arg\,min}

\DeclarePairedDelimiterX\setv[2]{\{}{\}}{#1 \;\delimsize\vert\; #2}
\DeclarePairedDelimiterX\setc[2]{\{}{\}}{#1 : #2}
\DeclarePairedDelimiterX\Pcond[2]{(}{)}{#1 \mkern2mu\delimsize\vert\mkern2mu #2}
\DeclarePairedDelimiterX\Econd[2]{[}{]}{#1 \mkern2mu\delimsize\vert\mkern2mu #2}

\newcommand{\tran}{\mathsf{T}}
\newcommand{\ud}{\mathrm{d}}
\newcommand{\uj}{\mathrm{j}}
\newcommand{\sfG}{\mathsf{G}}

\title{\bf Zeroth-Order Nonsmooth Nonconvex Optimization with Convex Liftings and Its Application to State-Feedback $\mathcal{H}_\infty$ Policy Optimization%
\thanks{This work was supported by the National Natural Science Foundation of China under Grant 72431001 and Grant 72301008.}}
\author[1]{Xuhao Wang}
\author[1]{Yujie Tang}

\affil[1]{Department of Control Science and Systems Engineering, Peking University}

\date{}

\begin{document}

\maketitle

\begin{abstract}
Direct policy optimization is widely used in reinforcement learning and control, but generally leads to nonconvex optimization problems. For state-feedback $\mathcal{H}_\infty$ control, the policy objective is also nonsmooth, despite possessing a benign landscape whose hidden convexity can be revealed by the recently developed extended convex lifting framework. Motivated by recent advances in hidden convex optimization, we study zeroth-order optimization of nonsmooth, nonconvex problems admitting a convex lifting. We propose a zeroth-order proximal point algorithm: An inexact proximal-point outer loop constructs strongly convex subproblems, while an inner loop approximately solves each subproblem using only function evaluations. With probability at least $1-\delta$, our proposed algorithm returns an $\epsilon$-optimal solution using $\widetilde{O}\left(d\epsilon^{-3}\right)$ function evaluations, while all iterates remain feasible without explicit projection. Finally, we verify that the assumptions underlying our analysis hold for discrete-time state-feedback $\mathcal{H}_\infty$ policy optimization, yielding an oracle complexity of $\widetilde{O}\left(n_u n_x\epsilon^{-3}\right)$ for attaining a prescribed objective value gap, where $n_u\times n_x$ is the dimension of the feedback gain to be optimized over.
\end{abstract}

\tableofcontents

\section{Introduction}

In the past few years, reinforcement learning has achieved impressive performance across a wide range of applications, such as competitive games~\cite{silver2017mastering}, robotics~\cite{kaufmann2023drone, haarnoja2024soccer}, and large language model training~\cite{ouyang2022training, guo2025deepseek, jin2025search}. One of the key components of the success of reinforcement learning is the policy optimization approach, which parameterizes the set of candidate policies by finite-dimensional vectors and directly optimizes the performance over the parameterized policy. Due to the close connections between the theories of Markov decision processes and optimal control, policy optimization has also garnered significant interest in the control community. In fact, policy optimization for control has a long history, and some of the early works include~\cite{levine1970determination,hyland1984optimal,apkarian2006nonsmooth,apkarian2006nonsmooth2}. Modern investigations on policy optimization for control have started to focus on optimization landscape analysis, algorithm design, and theoretical performance guarantees. The work~\cite{fazel2018global} investigated the discrete-time linear-quadratic regulator (LQR) problem and established global convergence of the corresponding policy optimization algorithm. Subsequent studies revealed further landscape properties of both discrete-time and continuous-time LQR policy optimization~\cite{bu2019lqr,malik2019derivative,mohammadi2022convergence}. Recent works have also investigated policy optimization for other control problems, including risk-sensitive control~\cite{zhang2021h2hinf,zhang2021derivativefree}, Kalman filtering~\cite{umenberger2022globally}, linear quadratic Gaussian (LQG) control~\cite{tang2023analysis, zheng2022escaping, li2025policy}, distributed control~\cite{li2021distributed}, etc.; see \cite{hu2023foundation, talebi2024policy} for a survey.  

For control tasks subject to adversarial disturbances and/or model uncertainty, control theorists have established the robust control paradigm. One of the standard formulations for robust control is $\mathcal{H}_\infty$ control~\cite{zames1981feedback}, and existing literature has established roughly three broad approaches for $\mathcal{H}_\infty$ controller synthesis. One classical approach is the Riccati-equation-based approach~\cite{zhou1996robust}, which finds a (sub)optimal controller by solving certain Riccati equations. Another classical approach is the LMI-based approach, which recasts the controller synthesis problem into a convex semidefinite program (SDP) via the bounded-real lemma and suitable change of variables~\cite{boyd1994linear,gahinet1994linear,iwasaki1994all,scherer1997multiobjective}. These two approaches require explicit knowledge of the system models, which may be hard to obtain for a complex system. A third viable approach is just direct policy optimization~\cite{apkarian2006nonsmooth,apkarian2006nonsmooth2}, which can in general be more scalable and more flexible but has underdeveloped theoretical guarantees. The recent works \cite{guo2022global,guo2023complexity,watanabe2025policy} have started to focus on the convergence rates and oracle/sample complexities of state-feedback $\mathcal{H}_\infty$ policy optimization algorithms. However, so far such rate or complexity results are only with regard to certain stationarity measures (e.g., distance from the origin to the Goldstein subdifferential) rather than the objective value gap.

One of the major difficulties in establishing convergence rates of the objective value gap for state-feedback $\mathcal{H}_\infty$ policy optimization methods lies in the fact that the $\mathcal{H}_\infty$ cost is generally nonconvex and nonsmooth in controller parameters~\cite{apkarian2006nonsmooth, burke2005robust, goldstein1977optimization}. It is generally hard to get convergence guarantees on objective value gaps for nonconvex nonsmooth problems. Fortunately, the convex SDP reformulations from the LMI-based approach of controller synthesis provide tools to reveal the \emph{hidden convexity} of policy optimization, suggesting that for many benchmark control problems, their policy optimization formulations have more well-behaved landscape properties. Indeed, \cite{bu2019lqr,mohammadi2022convergence} exploited the LMI-based convex reformulations of LQR to establish important landscape properties of LQR policy optimization, including the celebrated Polyak–Łojasiewicz (PL) property. \cite{umenberger2022globally,tang2023analysis,hu2022connectivity,tang2023global} further investigated the optimization landscapes of Kalman filtering, LQG, and output-feedback $\mathcal{H}_\infty$ control. In \cite{zheng2026benign1,zheng2026benign2}, the authors formalized the Extended Convex Lifting (\texttt{ECL}) framework, a unified framework that bridges the convex reformulations of LQR, LQG, $\mathcal{H}_\infty$ control, etc. with their policy optimization problems. The work \cite{pai2026mixed} revealed hidden convexity in mixed $\mathcal{H}_2$/$\mathcal{H}_\infty$ policy optimization based on the \texttt{ECL} framework.
For state-feedback $\mathcal{H}_\infty$ control, \cite{guo2022global,watanabe2025policy} utilized similar ideas and established coercivity, weak convexity and weak PL property of the objective function, and showed that any Clarke stationary point is globally optimal. In light of these promising results, it is then natural to ask: \textbf{By exploiting its hidden convexity, can we establish rate/complexity results for the convergence of the objective value gap for state-feedback $\mathcal{H}_\infty$ policy optimization?}

Various forms of hidden convexity have been identified in optimization \cite{ben1996hidden,wu2007peeling} and have been employed for designing algorithms with strong convergence guarantees. Recently, the works \cite{fatkhullin2025stochastic,fatkhullin2025global} proposed algorithms for solving nonconvex problems with implicit hidden convexity and provided complexity guarantees in terms of the objective value gap.
\cite{bhaskara2025descent} studied convex optimization with misaligned stochastic gradients and applied it to solving hidden convex problems. However, it is not yet clear how to apply the above works to policy optimization in control. For one thing, these works mostly focused on a simpler form of hidden convexity where the nonconvex functions can be convexified via a direct transformation on the decision variable. On the other hand, the convex reformulations of optimal or robust control problems usually involve additional Lyapunov variables. Besides, the original and transformed feasible regions in these works are both assumed to be convex and closed, while the feasible region of state-feedback $\mathcal{H}_\infty$ policy optimization (and also LQR, LQG, etc.) is open and nonconvex, making simple projection operations inadmissible. In addition, it is usually tricky to obtain gradient information for $\mathcal{H}_\infty$ policy optimization~\cite{apkarian2006nonsmooth}.

\subsection{Our Contributions}

In this article, we investigate how to exploit the hidden convexity of state-feedback $\mathcal{H}_\infty$ policy optimization to design a zeroth-order algorithm with rate and complexity guarantees on the objective value gap. We formulate the notion of \emph{convex liftings}, which incorporates extra lifting variables in the convexification procedure and is tailored for state-feedback optimal/robust control problems (see \cref{assumption:convex_lifting}). This convex lifting notion is inspired by the recently developed \texttt{ECL} framework~\cite{zheng2026benign2} for revealing hidden convexity of a wide range of optimal and robust control problems. Specifically, our contributions can be summarized as follows:
\begin{enumerate}
\item \textbf{Algorithm:} We propose the Zeroth-Order Proximal Point Algorithm for nonconvex nonsmooth problems with convex liftings, following the structure of the inexact proximal point method~\cite{rockafellar1976monotone}: The outer loop builds a quadratically penalized strongly convex subproblem, while the inner loop employs zeroth-order gradient descent to produce an inexact solution to the subproblem. The inner loop does not involve any projection step but uses a specially tailored step size rule to ensure that all intermediate iterates will stay in the feasible region.

\item \textbf{High-probability rate and complexity guarantees:}
We prove high-probability rate and complexity bounds on the \emph{global objective value gap} for our proposed algorithm. Specifically, given any sufficiently small $\epsilon$ and $\delta$, the number of zeroth-order oracle queries needed to achieve $f(x(N))-f^\ast\leq\epsilon$ with probability at least $1-\delta$ is bounded by $\widetilde{O}\left(d\epsilon^{-3}\right)$, where $f^\ast$ is the globally optimal value and $d$ is the problem dimension (see \cref{theorem:main} for more details). Here we establish high-probability-type guarantees because a successful run of our algorithm requires all intermediate iterates of the inner loop to stay within the open and nonconvex feasible region in the presence of randomness from zeroth-order gradient estimators. Apart from the $O(d)$ factor, this oracle complexity bound matches the best-known first-order complexity bound for nonsmooth hidden convex optimization.

\item \textbf{Application to state-feedback $\mathcal{H}_{\infty}$ policy optimization:} We discuss in detail how to construct a convex lifting for the state-feedback $\mathcal{H}_\infty$ policy optimization problem, leveraging results from the LMI-based convex reformulation. After verifying all technical assumptions, we apply our proposed algorithm and obtain a state-feedback $\mathcal{H}_\infty$ policy optimization algorithm with strong guarantees on the convergence of the global objective value gap.
\end{enumerate}
In \cref{subsec:theoretical_results,subsec:H-infty_policy_optimization_result}, we shall provide detailed comparisons of our results with closely related existing works on hidden convexity optimization and state-feedback $\mathcal{H}_\infty$ policy optimization.

\subsection{Related Work}
\label{sec:related-work}

This subsection provides a non-exhaustive review of some closely related work.

\vspace{-9pt}
\paragraph{Policy optimization for control.}
As mentioned before, policy optimization for control has a long history \cite{levine1970determination,hyland1984optimal}. The survey~\cite{hu2023foundation} focused on theoretical foundations of policy optimization for continuous control. The survey \cite{talebi2024policy} explored a geometric perspective and its implications for algorithmic analysis.  

For LQR, \cite{fazel2018global} proved global convergence of policy-gradient methods by establishing the PL property and regularity on sublevel sets.  \cite{bu2019lqr} further studied first-order methods for discrete-time LQR and quantified the behavior of policy gradient, natural policy gradient, and quasi-Newton iterations. \cite{malik2019derivative} gave finite-sample guarantees for a stochastic derivative-free policy gradient method.  \cite{mohammadi2022convergence} analyzed convergence and sample complexity for model-free policy gradient methods for continuous-time LQR. For other types of control problems, \cite{sun2021convex} examined the relationship between classical convex parameterizations and policy optimization, and proposed a unified framework for explaining the PL property for a broad class of control problems. \cite{li2021distributed} proposed a zeroth-order distributed policy optimization method for decentralized LQR and established convergence and sample-complexity probabilistic guarantees. \cite{tang2023analysis} analyzed connectivity of the set of stabilizing dynamic controllers and structures of stationary points of the LQG landscape, and showed that stationary points corresponding to controllable and observable controllers are globally optimal.  \cite{zheng2022escaping} designed a perturbation-based method to escape the high-order saddles that can occur in LQG policy optimization.

For $\mathcal{H}_\infty$ control,  \cite{apkarian2006nonsmooth,apkarian2006nonsmooth2} studied direct policy optimization for $\mathcal{H}_\infty$, but their works lack solid optimality guarantees. \cite{guo2022global} established global convergence of Goldstein's subgradient method and its variants for state-feedback $\mathcal{H}_\infty$ policy optimization. \cite{guo2023complexity} proposed a derivative-free policy optimization framework for structured $\mathcal{H}_\infty$ synthesis, and analyzed its high-probability complexity bounds for finding a $(\delta,\epsilon)$-Goldstein stationary point.  \cite{watanabe2025policy} investigated the vanilla subgradient method for state-feedback $\mathcal{H}_\infty$ policy optimization and proved an $O(\epsilon^{-4})$ complexity bound for finding an $\epsilon$-inexact stationary point. 

\vspace{-9pt}
\paragraph{Hidden convexity for nonconvex problems.}
Broadly speaking, when a nonconvex optimization problem can be equivalently reformulated into a convex one, we say that it admits hidden convexity. \cite{ben1996hidden} exhibited hidden convexity for a special type of nonconvex quadratically constrained quadratic program. More recently, \cite{chen2025efficient} proposed the mirror stochastic gradient method that operates only in the original decision variable space for certain hidden convex problems; they established $\widetilde{O}(\epsilon^{-2})$ complexities and discussed its application in a network revenue management problem. \cite{fatkhullin2025stochastic} studied stochastic optimization when an implicit invertible map converts the problem to a convex form, proving global convergence and complexity bounds for projected stochastic (sub)gradient methods.  \cite{fatkhullin2025global} proposed an inexact proximal-point framework for functionally constrained hidden convex problems, obtaining global last-iterate guarantees for both smooth and nonsmooth setups. \cite{bhaskara2025descent} investigated gradient descent with misaligned stochastic gradients, and then applied it to hidden convex problems to obtain an $O(\epsilon^{-3})$ complexity bound. We remark that the works~\cite{chen2025efficient,fatkhullin2025stochastic,fatkhullin2025global,bhaskara2025descent} mostly focused on the setting where the nonconvex functions are convexified via a direct transformation on the decision variable, while for policy optimization in control, the hidden convexity appears more complicated as extra Lyapunov variables are usually involved in the convexification procedure.

Regarding policy optimization for control, existing works have already extensively exploited hidden convexity for landscape analysis on a case-by-case basis~\cite{bu2019lqr,bu2021topological,mohammadi2022convergence,umenberger2022globally,guo2022global,hu2022connectivity}. \cite{zheng2026benign2} formalized \texttt{ECL} as a bridge connecting nonconvex policy optimization problems with the convex reformulations, covering a wide range of classical control problems including LQR, LQG, and state- and output-feedback $\mathcal{H}_\infty$ control. \cite{zheng2026benign1} used \texttt{ECL} to show global optimality of nondegenerate Clarke stationary points for LQG and output-feedback $\mathcal{H}_\infty$ policy optimization.  \cite{watanabe2025policy} proved that the discrete-time $\mathcal{H}_\infty$ cost is weakly convex on convex subsets of its sublevel sets and established a weak Polyak--\L{}ojasiewicz inequality for state feedback, based on a similar idea to \texttt{ECL}.



\vspace{-9pt}
\paragraph{Zeroth-order optimization.}
Zeroth-order optimization methods only query function values but not derivatives during the optimization procedure. One popular class of zeroth-order methods is to construct gradient estimators using function values at finitely many points. One of the earliest such works is \cite{nemirovsky1983problem}, which proposed a single-point gradient estimator. \cite{flaxman2005online} applied the single-point gradient estimator to online bandit convex optimization and derived the corresponding regret bound. \cite{duchi2015optimal} focused on theoretical properties of two-point gradient estimators, and also derived information-theoretic lower bounds on the minimax convergence rate in stochastic convex optimization. \cite{nesterov2017random} analyzed two-point estimators with Gaussian perturbations and obtained rate and complexity results for various problem setups. \cite{shamir2017optimal} gave an optimal algorithm for bandit and zeroth-order nonsmooth convex optimization, using a central-difference version of two-point gradient estimators. For nonsmooth nonconvex problems, \cite{kornowski2024optimal} designed a zeroth-order algorithm that achieves the optimal complexity of producing a $(\delta,\epsilon)$-Goldstein stationary point. In this work, we employ the central-difference version of two-point gradient estimators in the inner-loop iterations.


\vspace{-9pt}
\paragraph{High-probability guarantees.}
Randomness of convergence behavior naturally arises for randomized algorithms or when stochastic errors occur because of random noise, small sample sizes, etc. It is popular to establish rate and complexity results with respect to the expectation of optimality/stationarity measures. On the other hand, high-probability rate and complexity guarantees are usually considered to be stronger and have been a topic of significant interest. 
In~\cite{rakhlin2012making}, the authors studied stochastic (sub)gradient descent for stochastic strongly convex optimization and derived a near-optimal high-probability convergence rate. \cite{hazankale2014beyond} introduced an epoch-based variant of stochastic subgradient descent and showed that it achieves a better high-probability convergence rate for stochastic strongly convex optimization. \cite{harvey2019tight} established a generalized version of Freedman's inequality in martingale concentration, and gave tight high-probability analysis for suffix-averaging and last iterate of stochastic subgradient descent for nonsmooth strongly convex problems. Then \cite{harvey2019simple} analyzed a non-uniform averaging scheme of stochastic subgradient descent for nonsmooth strongly convex problems, showing that it also achieves the optimal rate, and discussed an extension to sub-Gaussian noise. 
%
%
In this work, we employ the generalized Freedman inequality in~\cite{harvey2019tight} for convergence analysis of the inner loop. On the other hand, we also provide a high-probability guarantee for the intermediate iterates to stay within the feasible region, which we believe brings some new techniques and perspectives.


\paragraph{Notation.} For each positive integer $n$, the set $\{1,2,\ldots,n\}$ will be denoted by $[n]$. We let $\langle x,y\rangle=x^\tran y$ denote the standard inner product on Euclidean spaces, and let $\|x\|=\sqrt{\langle x,x\rangle}$ denote the standard $\ell_2$ norm. For a matrix $A\in\mathbb{R}^{m\times n}$, we let $\|A\|_F$ denote its Frobenius norm. The set of positive reals is denoted by $\mathbb{R}_{++}$. The sets of $m\times m$ real symmetric matrices, real positive semidefinite matrices, and real positive definite matrices are denoted by $\mathbbm{S}^m$, $\mathbbm{S}_+^m$, and $\mathbbm{S}_{++}^m$, respectively. When $B-A$ is positive semidefinite, we denote $A\preceq B$ or $B\succeq A$; when $B-A$ is positive definite, we denote $A\prec B$ or $B\succ A$. 
For a function $f:\mathcal{D}\rightarrow\mathbb{R}$, its epigraph is denoted by $\operatorname{epi}(f)\coloneqq\setc*{(x,\gamma)}{x\in\mathcal{D},\gamma\geq f(x)}$.
$\mathbb{S}_{d-1}\coloneqq \setc*{x\in\mathbb{R}^d}{\|x\|=1}$ denotes the unit sphere, and $\mathbb{B}_d\coloneqq \setc*{x\in\mathbb{R}^d}{\|x\|\leq 1}$ denotes the closed unit ball. The uniform distributions on $\mathbb{S}_{d-1}$ and $\mathbb{B}_d$ will be denoted by $\mathcal{U}(\mathbb{S}_{d-1})$ and $\mathcal{U}(\mathbb{B}_d)$, respectively.  
The Minkowski sum of two subsets $A,B$ of a Euclidean space will be denoted by $A+B=\setc*{x+y}{x\in A,y\in B}$, and we shall just denote $x+B=\{x\}+B$. 
We use $O(\cdot)$ and $\Theta(\cdot)$ as the big-O and big-Theta notations, and use $\widetilde{O}(\cdot)$ to hide polylogarithmic terms. 

\section{Problem Formulation and Preliminaries}

\subsection{A Motivating Example: State-Feedback $\mathcal{H}_\infty$ Policy Optimization}

We first introduce the state-feedback $\mathcal{H}_\infty$ policy optimization problem as a major motivating example. Consider a discrete-time LTI system
\begin{equation}\label{eq:Hinf_plant}
\begin{aligned}
x(t+1) &= Ax(t)+Bu(t)+ B_w w(t), 
\end{aligned}
\end{equation}
where $x(t)\in\mathbb{R}^{n_x}$ is the state variable, $u(t)\in\mathbb{R}^{n_u}$ is the control input, and $w(t)\in\mathbb{R}^{n_w}$ is the disturbance. The initial state $x(0)$ is set to be zero.

In state-feedback $\mathcal{H}_\infty$ control, we regard $w(t)$ as an adversarial disturbance with bounded energy, and seek to minimize a quadratic performance in the worst case. Specifically, we define the $\ell_2$ norm of a disturbance signal $w=(w(t))_{t=0}^\infty$ by $\|w\|_{\ell_2} = \left(\sum_{t=0}^\infty w(t)^\tran w(t)\right)^{\!1/2}$,
and say that $w$ has bounded energy whenever $\|w\|_{\ell_2}<+\infty$. We employ the quadratic performance metric
$\sum_{t=0}^\infty \left(
x(t)^\tran Q x(t)+u(t)^\tran R u(t)\right)$
to evaluate the system's performance of a given trajectory $(x(t),u(t))$. The state-feedback $\mathcal{H}_\infty$ control problem is then formulated as
\begin{equation}
\label{eq:Hinf_control_formulation}
\begin{aligned}
\min_{(u(t))_{t=0}^\infty}\  & \sup_{w:\|w\|_{\ell_2}\leq 1}
\sum_{t=0}^\infty\left(
x(t)^\tran Q x(t)+u(t)^\tran R u(t)\right), \\
\text{s.t.}\ & \cref{eq:Hinf_plant}\text{ and }x(0)=0,
\end{aligned}
\end{equation}
where each $u(t)$ should only depend on the historical state observations $x(1),\ldots,x(t)$. We impose the assumptions that $(A, B)$ is controllable, and that $Q$, $R$ and $W\coloneqq B_wB_w^\tran$ are positive definite matrices.

Classical robust control theory has established that there exists a static linear feedback controller $u(t)=K_\star x(t)$ that solves~\cref{eq:Hinf_control_formulation}, where $K_\star$ is a constant gain matrix. Therefore, without loss of generality, we can narrow our focus to static linear feedback controllers of the form
\begin{equation}\label{eq:static_linear}
u(t)=Kx(t)
\end{equation}
where $K\in\mathbb{R}^{n_u\times n_x}$ is a gain matrix to be determined. We shall just use $K$ to parameterize the controller~\cref{eq:static_linear} and refer to $K$ as a \emph{policy}. The closed-loop system with policy $K$ then becomes
\begin{equation}
\label{eq:Hinf_closed-loop}
x(t+1) = (A+BK)x(t)+B_w w(t),
\end{equation}
and (the square root of) the worst-case quadratic performance now becomes a function of $K$:
\[
\begin{aligned}
J_\infty(K)\coloneqq{} &  \left[\sup_{w:\|w\|_{\ell_2}\leq 1}
\sum_{t=0}^\infty x(t)^\tran (Q+K^\tran RK) x(t)\right]^{\!1/2} \\
\text{s.t.}\ & \cref{eq:Hinf_closed-loop} \text{ and }x(0)=0.
\end{aligned}
\]
It is straightforward to check that $J_\infty(K)\in\mathbb{R}_{++}\cup\{+\infty\}$ for all $K\in\mathbb{R}^{n_u\times n_x}$. Classical control theory has established that, under our assumptions on the system and performance matrices, $J_\infty(K)<+\infty$ if and only if $A+BK$ is Schur stable, i.e., all eigenvalues of $A+BK$ have magnitudes strictly less than~$1$. We therefore define the set
\[
\mathcal{K}\coloneqq
\setc*{K}{A+BK\text{ is Schur stable}},
\]
which naturally serves as the domain of the function $J_\infty$. We can then reformulate the original $\mathcal{H}_\infty$ control problem~\cref{eq:Hinf_control_formulation} as the following policy optimization problem:
\begin{equation} \label{eq:Hinf_policy_optimization}
\begin{aligned}
\inf_{K}\ \ & J_{\infty}(K)\\
\text{s.t.}\ \ &
K\in\mathcal{K}.
\end{aligned}
\end{equation}

It is known that~\cref{eq:Hinf_policy_optimization} is generally a nonconvex and nonsmooth optimization problem. Existing literature (see, e.g., \cite{guo2022global,guo2023complexity,watanabe2025policy}) has established some fundamental analytical and geometrical properties of $J_\infty$ and $\mathcal{K}$, which are summarized in the following proposition.

\begin{proposition}
Under the assumptions that $(A,B)$ is controllable and that $Q,R$ and $B_wB_w^\tran$ are positive definite, we have
\begin{enumerate}
\item $\mathcal{K}$ is an open subset of $\mathbb{R}^{n_u\times n_x}$ that can be nonconvex and unbounded.
\item $J_\infty$ is continuous but possibly not everywhere differentiable on $\mathcal{K}$. 
\item $J_\infty$ is coercive, i.e., $J_\infty(K)\rightarrow+\infty$ whenever $\|K\|_F\rightarrow+\infty$ or $\operatorname{dist}(K,\partial\mathcal{K})\rightarrow 0$. Here $\operatorname{dist}(K,\partial\mathcal{K})$ denotes the minimum distance from $K$ to any point on $\partial\mathcal{K}$, the boundary of $\mathcal{K}$.
\end{enumerate}
\end{proposition}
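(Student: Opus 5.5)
We treat the three items separately and rely on standard facts about the closed-loop transfer function. First we express $J_\infty$ in the frequency domain. For $K\in\mathcal{K}$, the map from $w$ to $z(t)\coloneqq(Q+K^\tran RK)^{1/2}x(t)$ has transfer function
\[
T_K(z)=(Q+K^\tran RK)^{1/2}(zI-A-BK)^{-1}B_w .
\]
By Parseval's theorem (the $\ell_2$-induced norm of a stable LTI system equals its $\mathcal{H}_\infty$ norm), we get
\[
J_\infty(K)=\sup_{\theta\in[0,2\pi]}\sigma_{\max}\bigl(T_K(e^{\uj\theta})\bigr).
\]

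For item 1, openness follows because the eigenvalues of $A+BK$ depend continuously on $K$, so the spectral radius is continuous and $\mathcal{K}$ is the preimage of $[0,1)$. For possible nonconvexity, we exhibit a standard example with $n_x=2$, $n_u=1$ and $(A,B)$ controllable (e.g. the companion form). There, two stabilizing gains $K_1$ and $K_2$ with $-K_1$ stable can be chosen so that their midpoint has a closed-loop eigenvalue outside the unit disk. A concrete $2\times2$ instance with $A=0$, $B=I$ (so $A+BK=K$) also works: take $K_1=\begin{bmatrix}0&2\\0&0\end{bmatrix}$ and $K_2=K_1^\tran$. Both are nilpotent, hence Schur stable, but their average $\begin{bmatrix}0&1\\1&0\end{bmatrix}$ has eigenvalue $1$. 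For unboundedness, take $n_u=n_x$, $B=I$, $A=0$: then $\mathcal{K}$ contains the unbounded family $\begin{bmatrix}0&t\\0&0\end{bmatrix}$, $t\in\mathbb{R}$. Since the claim is only ``can be'', such examples suffice.

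For item 2, $T_K(e^{\uj\theta})$ is jointly continuous in $(K,\theta)$ on $\mathcal{K}\times[0,2\pi]$, because $e^{\uj\theta}I-A-BK$ is invertible there. The maximum singular value is continuous, and a supremum over a compact parameter set of a jointly continuous function is continuous, so $J_\infty$ is continuous on $\mathcal{K}$. For nondifferentiability, we note that $J_\infty$ is a pointwise max. At a $K$ where the maximum is attained at two frequencies, or where $\sigma_{\max}$ is repeated, there are distinct active branches. A simple diagonal example with two decoupled scalar subsystems then has the cost equal to $\max\{g_1(K_{11}),g_2(K_{22})\}$ with equal values at some point, which gives a kink.

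Item 3 is the main obstacle, and we handle it with two estimates. For $\|K\|_F\to\infty$, we use $\theta=0$ and the uniform lower bound $\sigma_{\max}\bigl((zI-A-BK)^{-1}\bigr)\ge 1/(1+\|A+BK\|)$ on the unit circle. Combined with $Q+K^\tran RK\succeq\lambda_{\min}(R)K^\tran K$ and $W\succ0$, this shows that a full-row-rank $B_w$ makes $J_\infty\gtrsim \|K\|\,\sigma_{\min}(B_w)/(1+\|A\|+\|B\|\|K\|)$. That bound is only bounded below, not growing, so we need a sharper argument. Instead we use the Lyapunov/bounded-real characterization: $J_\infty(K)^2\ge\operatorname{tr}$-type quantities, and in particular $J_\infty(K)^2\ge\lambda_{\max}(P)$-type bounds where $P\succeq (A+BK)P(A+BK)^\tran+W$. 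This gives $J_\infty(K)^2\ge \lambda_{\min}(R)\|K W^{1/2}\|^2/\|W\|$-like growth via the $\mathcal{H}_2$ norm bound $\|T\|_{\mathcal{H}_\infty}^2\ge \|T\|_{\mathcal{H}_2}^2/n_w$ and $\|T\|_{\mathcal{H}_2}^2=\operatorname{tr}((Q+K^\tran RK)X_K)$ with $X_K\succeq W$. Hence $J_\infty^2\ge \lambda_{\min}(W)\lambda_{\min}(R)\|K\|_F^2/n_w\to\infty$. For $\operatorname{dist}(K,\partial\mathcal{K})\to0$ with $K$ bounded, we have $\rho(A+BK)\to1$. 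Using $\operatorname{tr}(QX_K)\ge\lambda_{\min}(Q)\operatorname{tr}(X_K)$ with $X_K=\sum_t (A+BK)^tW(A+BK)^{\tran t}\succeq\lambda_{\min}(W)\sum_t\|(A+BK)^t\|_F^2$, which diverges as $\rho\to1$, the same $\mathcal{H}_2$ lower bound gives $J_\infty\to\infty$. If $K$ is unbounded along the sequence, the first case applies. Thus both coercivity claims reduce to the $\mathcal{H}_2\le\sqrt{n_w}\,\mathcal{H}_\infty$ comparison together with the Gramian lower bound, which is where the positive definiteness of $Q,R,W$ is used.
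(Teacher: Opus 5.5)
The paper gives no proof of this proposition; it presents it as a summary of results from \cite{guo2022global,guo2023complexity,watanabe2025policy}. Your argument is a correct, self-contained proof, modulo some write-up fixes listed at the end.

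The examples in items~1 and~2 satisfy all the hypotheses: take $A=0$ and $B=Q=R=B_w=I_2$. The kink argument goes through once you note that differentiability of $J_\infty$ at $\operatorname{diag}(1/2,1/2)$ would force differentiability of its restriction to diagonal gains. That restriction is $\max\{g(k_1),g(k_2)\}$ with $g(k)=\sqrt{1+k^2}/(1-|k|)$, which is not differentiable there because $g'(1/2)\neq 0$.

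For item~3, your $\mathcal{H}_2$ route gives explicit bounds:
\[
J_\infty(K)^2\ \ge\ \frac{\lambda_{\min}(R)\lambda_{\min}(W)}{n_w}\|K\|_F^2,
\qquad
J_\infty(K)^2\ \ge\ \frac{\lambda_{\min}(Q)\lambda_{\min}(W)}{n_w\bigl(1-\rho(A+BK)^2\bigr)}.
\]
These follow from $X_K\succeq W$, $\operatorname{tr}(X_K)\ge\lambda_{\min}(W)\sum_t\|(A+BK)^t\|_F^2$, and $\|(A+BK)^t\|_F\ge\rho(A+BK)^t$.

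The nearest argument in the paper is the appendix proof of Part~1 of \cref{proposition:Hinf_analytical_properties}. There, boundedness of $K=YX^{-1}$ comes from the LMI consequences $\lambda_{\min}(Q)\|X\|_F^2+\lambda_{\min}(R)\|Y\|_F^2\le\Delta\sqrt{n_x}\|X\|_F$ and $X\succeq\gamma^{-1}W$. That route relies on the non-strict bounded real lemma (Proposition~\ref{proposition:convex_lifting_Hinf}) but delivers more. It gives compactness of $\{(K,\gamma,P)\in\mathfrak{L}_{\mathrm{lft}}:\gamma\le\Delta\}$, including the Lyapunov variable $P$, which is what \cref{assumption:technical_objective} and the constants $D_\Delta,\chi_\Delta$ require. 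Projecting then yields compact $\mathcal{S}(\Delta)\subset\mathcal{K}$ and hence coercivity. Your route is more elementary (Parseval plus a Gramian bound, no bounded real lemma) and quantitative, but it controls only $K$, not $P$.

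When writing it up, fix the following:
\begin{enumerate}
\item Delete the false start in item~3. The ``$\lambda_{\max}(P)$-type'' and ``$\lambda_{\min}(R)\|KW^{1/2}\|^2/\|W\|$-like'' sentences are not correct as stated.
\item Turn ``$X_K\succeq\lambda_{\min}(W)\sum_t\|(A+BK)^t\|_F^2$'' into a trace inequality.
\item Justify $\rho(A+BK)\to 1$ for bounded sequences with vanishing boundary distance by a compactness argument: any limit point $\bar K$ lies in $\partial\mathcal{K}$, where $\rho(A+B\bar K)=1$.
\item Replace ``if $K$ is unbounded the first case applies'' with a subsequence argument.
\item Drop the incomplete companion-form sentence and the ``two active branches'' heuristic; the concrete examples already do the work.
\end{enumerate}
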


Because of the non-convexity and non-smoothness of $J_\infty$, even though policy optimization methods for $\mathcal{H}_\infty$ robust control were proposed in the 2000s (see, e.g., \cite{apkarian2006nonsmooth,apkarian2006nonsmooth2}), with software packages having been developed~\cite{gumussoy2009multiobjective,apkarian2014multi}, their theoretical guarantees had long been lacking.

With the recent surge in the study of policy optimization for control, several works \cite{guo2022global,watanabe2025policy,zheng2026benign2} have revealed hidden convexity underlying the nonconvex function $J_\infty$, which is closely related to the classical LMI-based convex reformulation of the state-feedback $\mathcal{H}_\infty$ control problem~\cite{gahinet1994linear,boyd1994linear,dullerud2000course}. Specifically, after lifting the epigraph of $J_\infty$ to a higher dimension by appending certain Lyapunov variable, one can construct a bijection that transforms the lifted set to a convex set. 
An important consequence of this ``convex lifting'' for $J_\infty$ is that all Clarke stationary points of $J_\infty$ are globally optimal, suggesting that first-order or zeroth-order optimization methods should be able to find globally optimal points despite the non-convexity of $J_\infty$. As reviewed in the introduction, the works \cite{guo2022global,guo2023complexity,watanabe2025policy} have investigated first-order and zeroth-order methods applied to the minimization of $J_\infty$ and have obtained promising results. However, these results all have certain limitations (see \cref{subsec:H-infty_policy_optimization_result} for detailed comparison). In particular, explicit convergence rate and oracle complexity results in terms of the objective value gap $J_\infty(K)-\inf_{K'}J_\infty(K')$ have remained unknown.

In this article, we consider nonsmooth, nonconvex optimization problems which possess certain ``convex lifting'' structures similar to the state-feedback $\mathcal{H}_\infty$ policy optimization problem. We shall develop an algorithm that has convergence rate and oracle complexity guarantees in terms of the objective value gap $f(x)-\inf_{x'} f(x')$.

\subsection{Problem Formulation}

Let $\mathcal{D}$ be an open and possibly nonconvex subset of $\mathbb{R}^d$, where we shall assume throughout the article that $d\geq 3$. Let $f:\mathcal{D}\rightarrow\mathbb{R}$ be a continuous function that we wish to minimize. Without loss of generality, we assume $\inf_{x\in\mathcal{D}}f(x)>0$, and let $f^\ast=\inf_{x\in\mathcal{D}}f(x)$ denote the optimal value of $f$ over $\mathcal{D}$. The function $f$ may be nonconvex or nonsmooth, but we impose the critical assumption of this article that the function $f$ admits a \emph{convex lifting}:
\begin{assumption}
\label{assumption:convex_lifting}
The function $f$ admits a \emph{convex lifting} $(\mathfrak{L}_{\mathrm{lft}},\mathfrak{F}_{\mathrm{cvx}},\Phi)$ in the following sense: $\mathfrak{L}_{\mathrm{lft}}$ is a subset of $\mathbb{R}^{d}\times\mathbb{R}\times\mathbb{R}^{d_\xi}$, $\mathfrak{F}_{\mathrm{cvx}}$ is a convex subset of $\mathbb{R}\times\mathbb{R}^{d+d_\xi}$, and $\Phi:\mathfrak{L}_{\mathrm{lft}}\rightarrow\mathfrak{F}_{\mathrm{cvx}}$ is a bijection. Furthermore, they satisfy the following conditions:
\begin{enumerate}
\item For every $x\in\mathbb{R}^d$ and $\lambda\in\mathbb{R}$, we have $(x,\lambda)\in\operatorname{epi}(f)$ if and only if there exists some $\xi\in\mathbb{R}^{d_\xi}$ such that $(x,\lambda,\xi)\in\mathfrak{L}_{\mathrm{lft}}$.
\item For every $(x,\lambda,\xi)\in\mathfrak{L}_{\mathrm{lft}}$, we have $\Phi(x,\lambda,\xi)=(\lambda,\zeta)$ for some $\zeta\in\mathbb{R}^{d+d_\xi}$.
\item Both $\Phi$ and $\Phi^{-1}$ are locally Lipschitz continuous.
\end{enumerate}
\end{assumption}

\Cref{assumption:convex_lifting} is adapted from the recent works \cite{zheng2026benign1,zheng2026benign2} on \emph{extended convex lifting} (\texttt{ECL}), a unified framework that allows convex analysis for nonconvex policy optimization formulations of benchmark control problems, including LQR, LQG, state-feedback $\mathcal{H}_\infty$ control, output-feedback $\mathcal{H}_\infty$ control, etc. Compared to the full version of \texttt{ECL} in \cite[Definition 1]{zheng2026benign2}, we have made the following simplifications: i) There is no auxiliary set that typically accounts for similarity transformation on dynamic feedback controllers; ii) all points in the graph $(x,f(x))$ have their associated lifting variable $\xi$, i.e., all points in the domain are \emph{nondegenerate} in the sense of \cite[Definition 2]{zheng2026benign2}. As shown in \cref{subsec:H-infty_policy_optimization_result} and in \cite[Section IV.B]{zheng2026benign2}, the simplified version presented in \cref{assumption:convex_lifting} still covers the state-feedback $\mathcal{H}_\infty$ policy optimization problem. We shall frequently refer to $\mathfrak{L}_{\mathrm{lft}}$ as the lifted set, as it lifts the epigraph of $f$ to a set of a higher dimension. The extra variable $\xi\in\mathbb{R}^{d_\xi}$ introduced in this lifting often corresponds to Lyapunov variables in the convex reformulation of state-feedback policy optimization problems. This extra variable $\xi$, together with the lifted set $\mathfrak{L}_{\mathrm{lft}}$, also differentiates our setup from~\cite{fatkhullin2025stochastic,chen2025efficient,fatkhullin2025global,bhaskara2025descent}, which mainly focus on a simpler situation where the objective function is convexified by a direct transformation on the decision variable without liftings.

Given the convex lifting $(\mathfrak{L}_{\mathrm{lft}},\mathfrak{F}_{\mathrm{cvx}},\Phi)$, we let
\[
\pi_{x,\lambda}:(x,\lambda,\xi)\mapsto (x,\lambda),
\qquad
\pi_{x}:(x,\lambda,\xi)\mapsto x,
\qquad
\pi_\lambda:(x,\lambda,\xi)\mapsto\lambda
\]
denote the respective canonical projection operators on $\mathbb{R}^d\times\mathbb{R}\times\mathbb{R}^{d_\xi}$. The first condition in \cref{assumption:convex_lifting} can then be equivalently written as
\[
\pi_{x,\lambda}(\mathfrak{L}_{\mathrm{lft}}) = \operatorname{epi}(f).
\]

Our goal is to solve the following nonconvex nonsmooth optimization problem:
\[
\min_{x\in\mathcal{D}} f(x).
\]
We shall further impose the restriction that only \emph{zeroth-order} information on the function $f$ is available, i.e., we can only query the function values of $f$ at finitely many points through a given zeroth-order oracle. For clarity and simplicity, we assume that the zeroth-order oracle is exact: No error or noise corrupts the function value returned by the oracle. The noisy situation is certainly important but is outside the scope of this article, and we leave it for future work.

With the zeroth-order restriction, our algorithm and theory can provide a basis for model-free extensions, where a good mathematical model of the system is lacking. For the state-feedback $\mathcal{H}_\infty$ policy optimization problem, \cite{guo2023complexity} has proposed a sample-based method for $\mathcal{H}_\infty$ cost estimation with guaranteed sample complexity. We also remark that, even when a good mathematical model is available, obtaining first-order information (such as Clarke subdifferential or Goldstein subdifferential) of a nonsmooth nonconvex function like $J_\infty$ can be difficult or tricky; see, e.g., the results on the subgradients of the $\mathcal{H}_\infty$ cost in~\cite{apkarian2006nonsmooth,apkarian2006nonsmooth2,zheng2026benign1}.

\subsection{Technical Assumptions}

We now state further technical assumptions that will be employed for convergence analysis.

\begin{assumption}
\label{assumption:technical_objective}
Given $f$ and its convex lifting $(\mathfrak{L}_{\mathrm{lft}},\mathfrak{F}_{\mathrm{cvx}},\Phi)$, for each $\Delta>0$, the following conditions hold:
\begin{enumerate}
\item The set
\[
\setc*{(x,\lambda,\xi)\in\mathfrak{L}_{\mathrm{lft}}}{\lambda\leq\Delta}
=\pi_{\lambda}^{-1}((-\infty,\Delta])\cap \mathfrak{L}_{\mathrm{lft}}
\]
is compact. Consequently, the sublevel set
\[
\mathcal{S}(\Delta)\coloneqq
\setc*{x\in\mathcal{D}}{f(x)\leq \Delta}
=\pi_{x}\!\left(\setc*{(x,\lambda,\xi)\in\mathfrak{L}_{\mathrm{lft}}}{\lambda\leq\Delta}\right)
\]
is compact.

\item There exists some $G_\Delta>0$ such that $f$ is $G_\Delta$-Lipschitz continuous on $\mathcal{S}(\Delta)$, i.e., $
|f(x)-f(y)|\leq G_\Delta\|x-y\|
$
for all $x,y\in\mathcal{S}(\Delta)$;
\item There exists some $\mu_\Delta>0$ such that $f$ is $\mu_\Delta$-weakly convex on any convex subset of $\mathcal{S}(\Delta)$. In other words, for any convex $C\subseteq\mathcal{S}(\Delta)$, the function $x\mapsto f(x)+\frac{\mu_\Delta}{2}\|x\|^2$ is convex on $C$.
\end{enumerate}
\end{assumption}

The second and third conditions in \cref{assumption:technical_objective} are common in nonsmooth nonconvex optimization~\cite{davis2019stochastic} except for the following subtlety: Since $J_\infty$ is coercive and its domain $\mathcal{D}$ is nonconvex and open, we can only impose ``semi-global'' Lipschitz continuity and weak convexity of the objective function. The first condition imposes certain boundedness that will be needed for convergence analysis.
In \cref{subsec:H-infty_policy_optimization_result}, we shall show that the objective function $J_\infty$ of state-feedback $\mathcal{H}_\infty$ policy optimization satisfies \cref{assumption:convex_lifting,assumption:technical_objective}.

\subsection{Preliminaries on Zeroth-Order Gradient Estimation}
\label{subsec:prelim_zeroth-order}

This part presents a brief introduction to the zeroth-order gradient estimation technique employed in our proposed algorithm.

For each $x\in\mathcal{D}$, $z\in\mathbb{S}_{d-1}$ and $\nu>0$ such that $x\pm\nu z\in\mathcal{D}$, we define
\begin{equation}
\label{eq:grad_est_definition}
\sfG(x;z,\nu)\coloneqq d\cdot\frac{f(x+\nu z)-f(x-\nu z)}{2\nu}\cdot z.
\end{equation}
One may perceive $\nu z$ as a small perturbation on $x$, in which $z$ represents the perturbation direction,  and $\nu$ controls the magnitude of the perturbation. We shall refer to $\nu$ as the \emph{smoothing radius}.

Since the domain $\mathcal{D}$ can be a proper subset of $\mathbb{R}^d$, given a fixed value of the smoothing radius~$\nu$, $\sfG(x;z,\nu)$ may not be defined if $x$ is too close to the boundary of $\mathcal{D}$. We therefore denote
\[
\mathcal{D}_\nu\coloneqq
\setc*{x\in\mathcal{D}}{x+\nu\mathbb{B}_d\subseteq\mathcal{D}}.
\]
It is straightforward to see that $\sfG(x;z,\nu)$ is well-defined over $x\in\mathcal{D}_\nu$ for any $z\in\mathbb{S}_{d-1}$. Since $\mathcal{D}$ is open, $\mathcal{D}_\nu$ will be open and nonempty for all sufficiently small $\nu>0$.

We have the following lemma that characterizes the expectation of $\sfG(x;z,\nu)$ when $z$ is uniformly sampled from the unit sphere.
\begin{lemma}
\label{lemma:expectation_grad_est}
Given $\nu>0$ such that $\mathcal{D}_\nu$ is nonempty, define $f_\nu:\mathcal{D}_\nu\rightarrow\mathbb{R}$ by
\begin{equation}
\label{eq:smoothed_version_func}
f_\nu(x) = \mathbb{E}_{y\sim\mathcal{U}(\mathbb{B}_d)}\!\left[f(x+\nu y)\right],\qquad \forall x\in\mathcal{D}_\nu.
\end{equation}
Then $f_\nu$ is differentiable over $\mathcal{D}_\nu$, and we have
\[
\mathbb{E}_{z\sim\mathcal{U}(\mathbb{S}_{d-1})}
\!\left[\sfG(x;z,\nu)\right]
=\nabla f_\nu(x),
\qquad\forall x\in\mathcal{D}_\nu.
\]
\end{lemma}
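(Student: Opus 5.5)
This is the classical sphere-smoothing identity (Flaxman et al.), adapted to the open domain $\mathcal{D}$. The plan is to prove differentiability of $f_\nu$ by a divergence-theorem (Stokes) argument applied to a local integral representation, and then to symmetrize.

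\emph{Step 1: integral formula for $\nabla f_\nu$.} Fix $x\in\mathcal{D}_\nu$. Since $\mathcal{D}$ is open and $x+\nu\mathbb{B}_d$ is compact and contained in $\mathcal{D}$, there is $r>0$ with $x'+\nu\mathbb{B}_d\subseteq\mathcal{D}$ for all $\|x'-x\|<r$. For such $x'$, write
\[
f_\nu(x')=\frac{1}{\operatorname{vol}(\nu\mathbb{B}_d)}\int_{\nu\mathbb{B}_d}f(x'+u)\,\ud u .
\]
On the compact neighborhood $x+(\nu+r)\mathbb{B}_d\subseteq\mathcal{D}$ the function $f$ is continuous, hence bounded and uniformly continuous. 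I will show
\[
\nabla f_\nu(x)=\frac{1}{\operatorname{vol}(\nu\mathbb{B}_d)}\int_{\nu\mathbb{S}_{d-1}}f(x+u)\frac{u}{\|u\|}\,\ud S(u).
\]
A direct route is to compute the difference quotient $f_\nu(x+h)-f_\nu(x)$ as an integral over the symmetric difference of the balls $x+h+\nu\mathbb{B}_d$ and $x+\nu\mathbb{B}_d$. To first order in $h$, this region is a shell whose signed thickness at boundary point $x+\nu z$ is $\langle h,z\rangle$. Uniform continuity of $f$ then makes the error $o(\|h\|)$.

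\emph{Step 1, alternative route (the main obstacle).} Alternatively, mollify: approximate $f$ uniformly on the compact neighborhood by smooth $f_k$, apply the divergence theorem to each $f_k$, and pass to the limit. Uniform convergence of both sides gives the formula, and uniform convergence of $\nabla (f_k)_\nu$ gives differentiability of $f_\nu$. The same estimates show the right-hand side is continuous in $x$, so $f_\nu$ is in fact $C^1$. This step is where the real work lies: justifying the limit and getting differentiability, not merely the formula. I expect the mollifier route to be cleanest.

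\emph{Step 2: match with $\mathbb{E}[\sfG]$.} Use $\operatorname{vol}(\nu\mathbb{B}_d)=\nu^d\operatorname{vol}(\mathbb{B}_d)$ and $\operatorname{area}(\nu\mathbb{S}_{d-1})=\nu^{d-1}d\operatorname{vol}(\mathbb{B}_d)$. Changing variables $u=\nu z$ gives
\[
\nabla f_\nu(x)=\frac d\nu\,\mathbb{E}_{z\sim\mathcal{U}(\mathbb{S}_{d-1})}\bigl[f(x+\nu z)z\bigr].
\]
By symmetry of $\mathcal{U}(\mathbb{S}_{d-1})$ under $z\mapsto -z$,
\[
\mathbb{E}[f(x+\nu z)z]=-\mathbb{E}[f(x-\nu z)z].
\]
Averaging the two expressions yields $\mathbb{E}[\sfG(x;z,\nu)]$, exactly as defined in \cref{eq:grad_est_definition}. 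All expectations are finite because $f$ is bounded on $x+\nu\mathbb{B}_d$.
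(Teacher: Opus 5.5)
Your proposal is correct. The paper gives no proof of this lemma and defers to the classical sources \cite{nemirovsky1983problem,shamir2017optimal}, whose argument is the same as yours: the divergence-theorem identity $\nabla f_\nu(x)=\frac{d}{\nu}\mathbb{E}_{z\sim\mathcal{U}(\mathbb{S}_{d-1})}[f(x+\nu z)z]$, followed by symmetrization under $z\mapsto -z$; what your write-up adds is the detail the paper leaves implicit, namely localizing to a compact neighborhood $x+(\nu+r)\mathbb{B}_d\subseteq\mathcal{D}$ (so $\mathcal{D}_\nu$ is open) and using the mollification (or shell) limit to show $f_\nu$ is differentiable, indeed $C^1$, when $f$ is merely continuous on the open domain $\mathcal{D}$.
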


\Cref{lemma:expectation_grad_est} justifies that $\sfG(x;z,\nu)$ with $z\sim\mathcal{U}(\mathbb{S}_{d-1})$ can serve as a stochastic gradient of the function $f_\nu$ defined by~\cref{eq:smoothed_version_func}. By plugging $\sfG(x;z,\nu)$ into, say, the stochastic gradient descent method, one will obtain a zeroth-order optimization algorithm that tries to minimize $f_\nu$. The function $f_\nu$ is sometimes referred to as the ``smoothed version'' of the function~$f$. The result of \cref{lemma:expectation_grad_est} dates back to \cite{nemirovsky1983problem} (see Section 9.3.2 therein); we refer to \cite{shamir2017optimal} for a more recent study on this zeroth-order gradient estimator for nonsmooth convex problems.

The following lemma lists some standard properties of $f_\nu$; we provide a proof of these properties in \cref{appendix:proof_properties_smoothed_func} for completeness. We shall see that the smoothed version of $f$ serves as an important bridge in establishing the convergence of our algorithm and in guaranteeing high-probability descent of the function value by zeroth-order gradient descent.

\begin{lemma}
\label{lemma:properties_smoothed_func}
Given $\Delta>0$ and $\nu>0$, suppose $S$ is a subset of $\mathcal{S}(\Delta)$ such that $S+\nu\mathbb{B}_d\subseteq\mathcal{S}(\Delta)$. Then the following properties hold:
\begin{enumerate}
\item $|f_\nu(x)-f(x)|\leq G_\Delta\nu$ for all $x\in S$.
\item $|f_\nu(x)-f_\nu(y)|\leq G_\Delta\|x-y\|$ for all $x,y\in S$.
\item $f_\nu$ is $G_\Delta d/\nu$-smooth over $S$.
Consequently, for $x,y\in S$ such that $\lambda x+(1-\lambda)y\in S$ for all $\lambda\in[0,1]$, we have
\[
f_\nu(y)\leq f_\nu(x)+\langle \nabla f_\nu(x),y-x\rangle
+\frac{G_\Delta d}{2\nu}\|y-x\|^2.
\]
\end{enumerate}
\end{lemma}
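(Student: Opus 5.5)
The plan is to prove the three properties in order, each directly from the definition \cref{eq:smoothed_version_func} together with the Lipschitz condition in \cref{assumption:technical_objective}. The one subtlety throughout is the domain. The Lipschitz bound is only available on $\mathcal{S}(\Delta)$, so every point at which $f$ is evaluated must lie there. For $x\in S$ and $y\in\mathbb{B}_d$ this holds because $x+\nu y\in S+\nu\mathbb{B}_d\subseteq\mathcal{S}(\Delta)$. In particular $S\subseteq\mathcal{D}_\nu$, so $f_\nu$ is defined and differentiable on $S$ by \cref{lemma:expectation_grad_est}.

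\textbf{Property 1.} For $x\in S$ I write
\[
|f_\nu(x)-f(x)|\le\mathbb{E}_{y\sim\mathcal{U}(\mathbb{B}_d)}|f(x+\nu y)-f(x)|\le G_\Delta\nu\,\mathbb{E}\|y\|\le G_\Delta\nu .
\]
Both $x$ and $x+\nu y$ lie in $\mathcal{S}(\Delta)$, so the Lipschitz bound applies.

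\textbf{Property 2.} For $x,y\in S$ I couple the two expectations through the same sample $u\sim\mathcal{U}(\mathbb{B}_d)$. Both $x+\nu u$ and $y+\nu u$ lie in $\mathcal{S}(\Delta)$, so
\[
|f(x+\nu u)-f(y+\nu u)|\le G_\Delta\|x-y\|,
\]
and taking expectations gives the claim.

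\textbf{Property 3.} This is the main step. I will use the sphere representation
\[
\nabla f_\nu(x)=\frac{d}{\nu}\,\mathbb{E}_{z\sim\mathcal{U}(\mathbb{S}_{d-1})}\bigl[f(x+\nu z)z\bigr],
\]
which follows from \cref{lemma:expectation_grad_est} (equivalently, from the divergence theorem); the symmetric form in $\sfG$ reduces to this by the symmetry $z\mapsto -z$. Then for $x,y\in S$,
\[
\|\nabla f_\nu(x)-\nabla f_\nu(y)\|=\frac{d}{\nu}\sup_{\|v\|=1}\mathbb{E}\bigl[(f(x+\nu z)-f(y+\nu z))\langle z,v\rangle\bigr]\le\frac{d}{\nu}G_\Delta\|x-y\|\,\mathbb{E}|\langle z,v\rangle|\le\frac{G_\Delta d}{\nu}\|x-y\|,
\]
using $|\langle z,v\rangle|\le1$. (The sharper bound $\mathbb{E}|\langle z,v\rangle|\lesssim 1/\sqrt d$ would improve the constant, but it is not needed.) Again $x+\nu z$ and $y+\nu z$ lie in $\mathcal{S}(\Delta)$, so the Lipschitz bound on $f$ is valid.

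For the quadratic upper bound, I take $x,y$ with the whole segment between them contained in $S$. Along that segment I apply the fundamental theorem of calculus to $t\mapsto f_\nu(x+t(y-x))$; this is differentiable because the segment lies in the open set $\mathcal{D}_\nu$. This gives
\[
f_\nu(y)-f_\nu(x)-\langle\nabla f_\nu(x),y-x\rangle=\int_0^1\langle\nabla f_\nu(x+t(y-x))-\nabla f_\nu(x),y-x\rangle\,\ud t,
\]
and the gradient-Lipschitz bound controls the integrand by $\frac{G_\Delta d}{\nu}t\|y-x\|^2$. Integrating yields the factor $\tfrac12$. Continuity of $\nabla f_\nu$ comes from the Lipschitz estimate just proved, so the fundamental theorem of calculus applies.
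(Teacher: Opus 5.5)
Your proof is correct and follows the paper's argument essentially step for step. Parts 1 and 2 use the same Lipschitz estimates on $\mathcal{S}(\Delta)$, with the same-sample coupling in Part 2, and Part 3 uses the sphere representation of $\nabla f_\nu$ from \cref{lemma:expectation_grad_est} followed by the fundamental theorem of calculus along the segment. The only cosmetic difference is that the paper keeps the symmetric form $\frac{d}{2\nu}\mathbb{E}[(f(x+\nu z)-f(x-\nu z))z]$ and applies the triangle inequality to the two differences, whereas you first reduce to the one-sided form via $z\mapsto -z$; both give the constant $G_\Delta d/\nu$.
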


The following lemma shows that the random variables $\|\sfG(x;z,\nu)\|$ and $\|\sfG(x;z,\nu)-\nabla f_\nu(x)\|$ are sub-Gaussian, which play a fundamental role in establishing high-probability convergence guarantees of the proposed algorithm. The proof is given in \cref{appendix:proof_lemma_sub-Gaussian_grad_est}.
\begin{lemma}
\label{lemma:sub-Gaussian_grad_est}
Let $\Delta>0$ be arbitrary. Suppose $\nu>0$ and $x\in\mathcal{S}(\Delta)$ satisfy $x+\nu\mathbb{B}_d\subseteq\mathcal{S}(\Delta)$. Then, for every $\epsilon>0$, we have
\begin{equation}
\label{eq:high_prob_norm_grad_est}
\mathbb{P}_z(\|\sfG(x;z,\nu)\|\geq \epsilon)\leq
\exp\!\left(
-\frac{\epsilon^2}{2G_\Delta^2 d}
\right),
\end{equation}
where $z\sim\mathcal{U}(\mathbb{S}_{d-1})$. Consequently,
\begin{equation}
\label{eq:sub-gaussiaon_grad_est_norm}
\mathbb{E}_z\!\left[\exp\!\left(\frac{\|\sfG(x;z,\nu)\|^2}{4G_\Delta^2 d}\right)\right]\leq 2,
\end{equation}
and
\begin{equation}
\label{eq:sub-Gaussian_norm_centered_grad}
\mathbb{E}_z\!\left[
\exp\!\left(\frac{\left\|\sfG(x;z,\nu)-\nabla f_\nu(x)\right\|^2}{8G_\Delta^2 d}\right)
\right]\leq 2.
\end{equation}
\end{lemma}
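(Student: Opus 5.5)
The natural route is concentration of measure on the sphere. Fix $x$ and $\nu$ with $x+\nu\mathbb{B}_d\subseteq\mathcal{S}(\Delta)$, and set
\[
h(z)\coloneqq \frac{f(x+\nu z)-f(x-\nu z)}{2\nu},\qquad z\in\mathbb{S}_{d-1},
\]
so that $\|\sfG(x;z,\nu)\| = d\,|h(z)|$ because $\|z\|=1$.

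\textbf{Step 1: Lipschitz constant and symmetry of $h$.} For $z,z'\in\mathbb{S}_{d-1}$, the points $x\pm\nu z$ and $x\pm\nu z'$ lie in $\mathcal{S}(\Delta)$. Hence \cref{assumption:technical_objective} (Lipschitz continuity on $\mathcal{S}(\Delta)$) gives
\[
|h(z)-h(z')|\le \frac{1}{2\nu}\bigl(G_\Delta\nu\|z-z'\|+G_\Delta\nu\|z-z'\|\bigr)=G_\Delta\|z-z'\|.
\]
So $h$ is $G_\Delta$-Lipschitz on the sphere. Moreover $h(-z)=-h(z)$, and $z\sim\mathcal{U}(\mathbb{S}_{d-1})$ is symmetric, so $\mathbb{E}[h(z)]=0$. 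The median of $h$ is also $0$, since the law of $h(z)$ is symmetric.

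\textbf{Step 2: Concentration and the tail bound \cref{eq:high_prob_norm_grad_est}.} Apply Lévy's concentration inequality for $L$-Lipschitz functions on $\mathbb{S}_{d-1}$:
\[
\mathbb{P}(|h(z)-\mathbb{E}h|\ge t)\le 2\exp\!\left(-\frac{c\,d\,t^2}{L^2}\right).
\]
We need the form $\exp(-d t^2/(2L^2))$ with no prefactor. Set $t=\epsilon/d$ and $L=G_\Delta$. Then
\[
\exp\!\left(-\frac{d(\epsilon/d)^2}{2G_\Delta^2}\right)=\exp\!\left(-\frac{\epsilon^2}{2G_\Delta^2 d}\right),
\]
which is exactly the claim.

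This constant-tracking is the main obstacle. The known sharp statement is $\mathbb{P}(h-\mathrm{med}\,h\ge t)\le \sqrt{\pi/8}\,e^{-(d-2)t^2/(2L^2)}$ (one-sided, about the median), or a Gaussian-isoperimetric version with $e^{-(d-1)t^2/(2L^2)}$. Since the median is $0$, the two-sided bound carries a factor $2\sqrt{\pi/8}<1$, but the exponent has $d-2$ or $d-1$ instead of $d$.

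I would try to handle this in one of two ways. The first is to use the sharp spherical isoperimetric bound
\[
\mathbb{P}(|h|\ge t)\le 2\,\sigma(\text{cap of angle}\ge \pi/2 + t/L),
\]
then bound the cap measure directly through the density $\propto(1-s^2)^{(d-3)/2}$. The second is to fall back on a Gaussian-comparison argument (the standing assumption $d\ge3$ suggests the authors use such a computation). If the exact constant resists, I would accept a slightly different constant in the exponent, since only the sub-Gaussian structure is used downstream.

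\textbf{Step 3: The MGF bound \cref{eq:sub-gaussiaon_grad_est_norm}.} Let $X=\|\sfG\|^2$ and $\sigma^2=2G_\Delta^2 d$. Integrate the tail:
\[
\mathbb{E}\, e^{X/(2\sigma^2)}=1+\int_0^\infty \frac{1}{2\sigma^2}e^{u/(2\sigma^2)}\,\mathbb{P}(X\ge u)\,du\le 1+\int_0^\infty \frac{1}{2\sigma^2}e^{-u/(2\sigma^2)}\,du=2.
\]
Here $\mathbb{P}(X\ge u)\le e^{-u/\sigma^2}$ by Step 2, and $2\sigma^2=4G_\Delta^2 d$, which gives \cref{eq:sub-gaussiaon_grad_est_norm}.

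\textbf{Step 4: The centered bound \cref{eq:sub-Gaussian_norm_centered_grad}.} By \cref{lemma:expectation_grad_est}, $\nabla f_\nu(x)=\mathbb{E}\,\sfG$. Use $\|a-b\|^2\le 2\|a\|^2+2\|b\|^2$, convexity of $\exp$, and Jensen's inequality $\|\mathbb{E}\sfG\|^2\le\mathbb{E}\|\sfG\|^2$:
\[
\exp\!\left(\frac{\|\sfG-\mathbb{E}\sfG\|^2}{8G_\Delta^2d}\right)\le \frac12 e^{\|\sfG\|^2/(4G_\Delta^2 d)}+\frac12 e^{\|\mathbb{E}\sfG\|^2/(4G_\Delta^2 d)}.
\]
The second exponential satisfies $e^{\|\mathbb{E}\sfG\|^2/(4G_\Delta^2d)}\le \mathbb{E}\,e^{\|\sfG\|^2/(4G_\Delta^2d)}\le 2$, again by Jensen. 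Taking expectations bounds the whole expression by $\frac12\cdot2+\frac12\cdot2=2$, which is \cref{eq:sub-Gaussian_norm_centered_grad}.
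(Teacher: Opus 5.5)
Your Step 4 has a genuine error. From $\|a-b\|^2\leq 2\|a\|^2+2\|b\|^2$ you only get $\frac{\|a-b\|^2}{8G_\Delta^2 d}\leq X+Y$ with $X=\frac{\|a\|^2}{4G_\Delta^2 d}$ and $Y=\frac{\|b\|^2}{4G_\Delta^2 d}$. That is a sum, not an average, so convexity of $\exp$ gives $\frac12 e^{2X}+\frac12 e^{2Y}$, with exponents twice what you wrote. Your displayed inequality is in fact false pointwise: for $a=-b\neq 0$ the left side is $e^{\|a\|^2/(2G_\Delta^2 d)}$ and the right side is $e^{\|a\|^2/(4G_\Delta^2 d)}$. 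Moreover, $\mathbb{E}\,e^{\|\sfG\|^2/(2G_\Delta^2 d)}$ is not controlled by the tail bound, since the tail integral diverges at that rate. Writing $e^{X+Y}=e^Xe^Y$ instead only gives $2e^{Y}$, which exceeds $2$.

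What is missing is a source of slack. The paper uses the asymmetric split $\|a-b\|^2\leq\frac43\|a\|^2+4\|b\|^2$, so that
\[
\exp\!\Big(\frac{\|\sfG-\nabla f_\nu(x)\|^2}{8G_\Delta^2 d}\Big)\leq\Big(e^{\|\sfG\|^2/(4G_\Delta^2 d)}\Big)^{2/3}e^{\|\nabla f_\nu(x)\|^2/(2G_\Delta^2 d)}.
\]
Jensen for the concave map $s\mapsto s^{2/3}$ then bounds the expectation of the first factor by $2^{2/3}$. The deterministic bound $\|\nabla f_\nu(x)\|\leq G_\Delta$ (Lipschitzness of $f_\nu$), together with $d\geq 3$, bounds the second factor by $e^{1/6}$. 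Since $2^{2/3}e^{1/6}<2$, the claim follows.

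Your Jensen estimate $\|\mathbb{E}\sfG\|^2\leq\mathbb{E}\|\sfG\|^2$ cannot replace $\|\nabla f_\nu(x)\|\leq G_\Delta$. The tail bound only gives $\mathbb{E}\|\sfG\|^2\leq 2G_\Delta^2 d$, which lets the second factor be as large as $e$, and then no Young-type split brings the product below $2$. A $d$-independent bound on the mean is essential.

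Steps 1--3 are otherwise correct and follow the paper. For Step 2, the paper does not rederive anything. It cites a concentration inequality on $\mathbb{S}_{d-1}$ with exactly the constant you need, $\mathbb{P}(|h-\mathbb{E}h|\geq\epsilon)\leq\exp(-d\epsilon^2/2)$ for $1$-Lipschitz $h$ (\cref{lemma:Levy_concentration_sphere}). You should invoke that form, because your fallback of weakening the constant would not prove the inequalities as stated. Your tail integration in Step 3 is a correct and slightly more direct alternative to the paper's moment-series computation.
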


\section{Algorithm and Main Results}

\subsection{Algorithm Design}

\begin{algorithm}[th]
\linespread{1.15}\selectfont
\SetAlgoLined
\DontPrintSemicolon
\caption{Zeroth-Order Proximal Point Algorithm}\label{algorithm:main}
Initialize: $x(0)\in\mathcal{D}$, $m>0$, step size offsets $(t_0(n))_{n=1}^N$, smoothing radii $\left(\nu(n)\right)_{n=1}^N$, epoch lengths $\left(T(n)\right)_{n=1}^N$\;
\For{$n\in[N]$}{
$x(n)\leftarrow\mathtt{ZOGD}\!\left(x(n-1),m,\nu(n),T(n),t_0(n)\right)$.\;
}
\KwRet $x(N)$.\;
\vspace{6pt}
\SetKwProg{mysubroutine}{Subroutine}{}{}
  \mysubroutine{$\mathtt{ZOGD}(x_0,m,\nu,T,t_0)$}{
  Initialize step sizes by $\eta_t=\mfrac{3}{m(t+t_0)},\,\forall t\in[T]$. \;
  \For{$t\in[T]$}{
  Sample $z_{t}\sim\mathcal{U}(\mathbb{S}_{d-1})$ independently of $z_1,\ldots,z_{t-1}$.\;
  $\displaystyle g_t=\sfG(x_{t-1};z_{t},\nu)$.\;
  $x_t=x_{t-1}-\eta_t(g_t+2m(x_{t-1}-x_0))$.\;
  }
  \KwRet $\displaystyle\overline{x}_T = \mfrac{2}{T(T+2t_0-1)}\sum\nolimits_{t=1}^T (t+t_0-1) x_{t-1}$.\;
  }
\end{algorithm}

Our proposed algorithm, the \emph{Zeroth-Order Proximal Point Algorithm}, is detailed in \cref{algorithm:main}. Basically, \Cref{algorithm:main} can be viewed as performing the following type of proximal point iterations:
\begin{equation}
\label{eq:approxiate_proximal_point}
x(n)\approx \argmin_{x\in\mathcal{D}}
\!\left\{f(x)+m\|x-x(n-1)\|^2\right\}.
\end{equation}
Here $m>0$ is an algorithmic parameter; by the weak convexity of $f$ in \cref{assumption:technical_objective}, one can see that when $m$ is sufficiently large, $x\mapsto f(x)+m\|x-x(n-1)\|^2$ will be strongly convex (though still nonsmooth) over any convex subset of $\mathcal{D}$, making the subproblem in~\cref{eq:approxiate_proximal_point} relatively well-behaved. The adoption of the proximal point framework in the algorithm design was inspired by the recent work~\cite{fatkhullin2025global} on nonconvex optimization with hidden convexity. While \cite{fatkhullin2025global} mainly focuses on a simpler type of hidden convexity where the nonconvex functions are convexified by a direct transformation on the decision variable, our theoretical analysis demonstrates that the proximal point framework can also handle the convex lifting setup formulated in this article.

Then, to find the corresponding approximate minimizer in each proximal point iteration~\cref{eq:approxiate_proximal_point}, we design the subroutine \texttt{ZOGD}, a zeroth-order gradient descent procedure. Given a base point $x_0\in\mathbb{R}^d$ (which will be $x(n-1)$ for the $n$th outer-loop iteration), the \texttt{ZOGD} subroutine carries out the following inner-loop iterations:
\begin{equation}
\label{eq:zeroth-order_subgradient_descent}
x_t=x_{t-1} - \eta_t(\sfG(x_{t-1};z_t,\nu)+2m(x_{t-1}-x_0)),\qquad\forall t\in[T].
\end{equation}
Here each $z_t$ is sampled from the uniform distribution on the unit sphere $\mathbb{S}_{d-1}$, and $\sfG(x_{t-1};z_t,\nu)$ is the zeroth-order gradient estimator introduced in \cref{subsec:prelim_zeroth-order}. By \cref{lemma:expectation_grad_est}, we see that $\sfG(x_{t-1};z_t,\nu)$ provides a stochastic gradient of the smoothed version $f_\nu$, whose construction only requires zeroth-order information. The term $2m(x_{t-1}-x_0)$ in~\cref{eq:zeroth-order_subgradient_descent} corresponds to the gradient of the quadratic term $m\|x-x_0\|^2$ evaluated at $x_{t-1}$. Adding $\sfG(x_{t-1};z_t,\nu)$ and $2m(x_{t-1}-x_0)$ together, we get a stochastic gradient of the function $f_\nu(x)+m\|x-x_0\|^2$, i.e.,
\[
\mathbb{E}\Econd*{
\sfG(x_{t-1};z_t,\nu)+2m(x_{t-1}-x_0)}{x_{t-1}} = \left.\nabla_x\!\left(f_\nu(x)+m\|x-x_0\|^2\right)\right|_{x=x_{t-1}}.
\]
Thus, by properly choosing the step sizes $\eta_t$ and running the iterations~\cref{eq:zeroth-order_subgradient_descent} for a sufficiently large number of steps, one would expect that a sufficiently accurate minimizer of $f_\nu(x)+m\|x-x_0\|^2$ could be found. Then, by properly controlling the smoothing radius $\nu$, Part 1 of \cref{lemma:properties_smoothed_func} guarantees that an approximate minimizer of $f_\nu(x)+m\|x-x_0\|^2$ will also be an approximate minimizer of $f(x)+m\|x-x_0\|^2$. Consequently, we would expect that
\[
\texttt{ZOGD}(x_0,m,\nu,T,t_0)
\approx\argmin_{x\in\mathcal{D}}\!\left\{
f(x)+m\|x-x_0\|^2
\right\},
\]
assuming all the algorithmic parameters are chosen properly.

Note that we employ a specific form of step sizes in the subroutine \texttt{ZOGD}:
\[
\eta_t=\frac{3}{m(t+t_0)},\qquad\forall t\in[T],
\]
and use the weighted average
\[
\overline{x}_T=\frac{2}{T(T+2t_0-1)}\sum_{t=1}^T (t+t_0-1)x_{t-1}
\]
as the subroutine's output, where $t_0$ depends on the outer-loop iteration number $n$. There are two major technical reasons underlying this choice:
\begin{enumerate}
\item We note that the function $f(x)+m\|x-x_0\|^2$ is a \emph{strongly convex} and \emph{nonsmooth} function. For $\mu$-strongly convex and nonsmooth problems, it is standard to set $\eta_t=\alpha/t$ with $\alpha\geq 2/\mu$ and employ certain weighted averaging of the intermediate iterates to achieve the optimal $O(1/T)$ convergence rate for the subgradient descent method; see, e.g., \cite{lacoste2012simpler,harvey2019simple,harvey2019tight}.

We also remark that, while the smoothed version $f_\nu$ is $L$-smooth (see~\cite[Lemma 8]{yousefian2012stochastic}), making it tempting to employ the step size rule $\eta_t=\eta\propto 1/L$ for smooth optimization, we do not employ this choice as the constant $L$ is inversely proportional to $\nu$. Since the smoothing radius $\nu$ needs to be controlled to be sufficiently small, the standard choice $\eta_t\propto 1/L$ for smooth optimization will lead to excessively small step sizes and may result in inferior convergence rates. We therefore still adopt the step size rules for strongly convex and nonsmooth optimization even though $f_\nu+m\|x-x_0\|^2$ is smooth.

\item We further add an offset $t_0$ in the step sizes $\eta_t=3/[m(t+t_0)]$, to guarantee that each intermediate iterate $x_t$ in the subroutine \texttt{ZOGD} will remain in $\mathcal{D}_\nu$ with high probability; otherwise the gradient estimator $\sfG(x_{t-1};z_t,\nu)$ may not be well-defined, in which case \cref{algorithm:main} will fail. In fact, for many benchmark optimal or control problems (including LQG, state-feedback $\mathcal{H}_\infty$ control, output-feedback $\mathcal{H}_\infty$ control, etc.), one of the challenges in establishing theoretical guarantees for the associated policy optimization algorithms is to ensure all intermediate iterates will remain in the domain of the objective function. As shown in \cite{bu2021topological,guo2022global,tang2023analysis,zheng2026benign1}, their objective functions have \emph{open} and \emph{nonconvex} domains, making projection onto the domain inadmissible or impractical. We shall see that the incorporation of an appropriate offset $t_0$ will address this issue effectively.
\end{enumerate}

\subsection{Convergence and Complexity Guarantees}
\label{subsec:theoretical_results}

We now present the main result of this article, whose proof will be deferred to \cref{sec:proof_outline,sec:analysis_subroutine}.

\begin{theorem}
\label{theorem:main}
Given any initial point $x(0)\in\mathcal{D}$, let $\Delta_0\geq f(x(0))$ and $\mu=\mu_{24\Delta_0}$, $G=G_{24\Delta_0}$. Then, for any $\delta\in\left(0,\min\{1/5,1/{\ln d}\}\right)$ and sufficiently large $N$, by choosing the algorithmic parameters of \cref{algorithm:main} to satisfy
\[
m=\max\!\left\{\mu,\frac{G^2}{2\Delta_0}\right\},
\quad t_0(n)
=\Theta\!\left(d\ln\frac{n}{\delta}\right),
\quad
\nu(n)\leq \frac{G/m}{6n(n+1)},
\quad 
T(n)=\Theta\!\left(n^2d\ln\frac{n}{\delta}\right),
\]
we have
\[
f(x(N))-f^\ast\leq O\!\left(\frac{1}{N}\right)
\]
with probability at least $1-\delta$. Consequently, for any sufficiently small $\epsilon>0$, the number of zeroth-order queries needed to achieve $f(x(N))-f^\ast\leq\epsilon$ with probability at least $1-\delta$ is bounded by
\[
O\!\left(\frac{d}{\epsilon^3}\ln\frac{1}{\epsilon\delta}\right).
\]
\end{theorem}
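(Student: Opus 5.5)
The proof has two layers. The outer layer is an inexact proximal-point analysis that exploits the convex lifting. The inner layer is a high-probability analysis of \texttt{ZOGD} on the strongly convex subproblem, which must also guarantee feasibility.

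\textbf{Outer loop.} Fix $n$ and let $F_n(x)=f(x)+m\|x-x(n-1)\|^2$. The key use of \cref{assumption:convex_lifting} is a ``hidden-convexity'' descent inequality: for any $\theta\in[0,1]$, the point
\[
\Phi^{-1}\bigl((1-\theta)\Phi(x(n-1),f(x(n-1)),\xi_{n-1})+\theta\Phi(x^\star,f^\ast+\epsilon',\xi^\star)\bigr)
\]
lies in $\mathfrak{L}_{\mathrm{lft}}$. Its $\lambda$-component is the convex combination $(1-\theta)f(x(n-1))+\theta(f^\ast+\epsilon')$, because $\Phi$ preserves $\lambda$. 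Its $x$-component $y_\theta$ therefore satisfies $f(y_\theta)\le(1-\theta)f(x(n-1))+\theta f^\ast+\theta\epsilon'$.

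By compactness (Part 1 of \cref{assumption:technical_objective}) and the local Lipschitz continuity of $\Phi$ and $\Phi^{-1}$, we get $\|y_\theta-x(n-1)\|\le L\theta$ for a constant $L$ depending only on $\Delta_0$. Hence
\[
\min F_n\le f(x(n-1))-\theta\bigl(f(x(n-1))-f^\ast\bigr)+mL^2\theta^2.
\]
Writing $r_n=f(x(n))-f^\ast$ and optimizing over $\theta$ gives
\[
r_n\le r_{n-1}-c\min\{r_{n-1},r_{n-1}^2\}+e_n,
\]
where $e_n$ is the inner-loop suboptimality. If $e_n\lesssim 1/n^2$, a standard induction yields $r_N=O(1/N)$, and this holds uniformly as long as every iterate stays in $\mathcal{S}(24\Delta_0)$. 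The choice $m\ge G^2/(2\Delta_0)$ caps how far a prox step can move from $x(n-1)$ (at most $G/m$), so all quantities stay in a fixed sublevel set. The choice $m\ge\mu$ makes $F_n$ $m$-strongly convex on convex pieces of that set.

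\textbf{Inner loop.} I would show that \texttt{ZOGD} with $T(n)=\Theta(n^2 d\ln(n/\delta))$ returns $\overline{x}_T$ satisfying $F_n(\overline{x}_T)-\min F_n\le O(1/n^2)$ with probability $1-\delta/(2n^2)$. A union bound over $n$ then gives total failure probability at most $\delta$.

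The argument first restricts to the ball $B=x(n-1)+R\,\mathbb{B}_d$ with $R\approx 2G/m$. This ball lies inside $\mathcal{S}(24\Delta_0)$: on it $f\le\Delta_0+GR\le 24\Delta_0$, using the fact that $\mathcal{D}$-membership propagates along segments, which follows from continuity and coercivity. So $B$ is convex, and $F_{n,\nu}=f_\nu+m\|\cdot-x_0\|^2$ is $(m-\mu/2\cdot\text{smth})$-strongly convex there, with the smoothness and bias controlled by \cref{lemma:properties_smoothed_func}. The standard weighted-averaging analysis of SGD with $\eta_t=3/(m(t+t_0))$ (Lacoste-Julien et al.\ and Harvey et al.) expresses the error as a deterministic $O(G^2d/(mT))$-type term plus a martingale.

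That martingale is bounded with the generalized Freedman inequality of \cite{harvey2019tight}. Its increments are sub-Gaussian by \eqref{eq:sub-Gaussian_norm_centered_grad}, with conditional variance proportional to $\|x_{t-1}-x^\star\|^2$, and this self-bounding structure yields an $O(G^2 d\ln(1/\delta')/(mT))$ bound. The smoothing bias $2G\nu\le O(1/n^2)$ is absorbed by the stated choice of $\nu(n)$.

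\textbf{Main obstacle: feasibility.} The hardest step is showing that all $x_t$ remain in $B$, and hence in $\mathcal{D}_\nu$, with high probability, since no projection is applied. My approach is to track $D_t=\|x_t-x_0\|$. The recursion contracts, because $x_t-x_0=(1-2m\eta_t)(x_{t-1}-x_0)-\eta_t g_t$ and $2m\eta_t\le 6/t_0<1$, so
\[
D_t\le(1-2m\eta_t)D_{t-1}+\eta_t\|g_t\|.
\]
By \eqref{eq:high_prob_norm_grad_est}, $\|g_t\|\le G\sqrt{2d\ln(T/\delta')}$ for all $t$ simultaneously, except on an event of probability $\delta'$.

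A crude bound of this kind gives a stationary level of order $G\sqrt{d\ln}/m$, which is too large. So I would instead split $g_t=\nabla f_\nu(x_{t-1})+\text{noise}$. The drift part keeps $D_t$ near the prox radius $G/m$. The noise part is a martingale $\sum_t\prod(1-2m\eta_s)\eta_t\epsilon_t$ whose variance is $\sum\eta_t^2 G^2 d\approx G^2 d/(m^2t_0)$. Choosing $t_0=\Theta(d\ln(n/\delta))$ makes this $O(G^2/m^2)$ with the required confidence, via an Azuma/Freedman bound on vector martingales plus a stopping-time argument at the first exit from $B$.

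This is exactly where the offset $t_0$ is needed, and it drives the $d\ln$ factors in both $t_0$ and $T(n)$. Summing $T(n)$ over $n\le N=O(1/\epsilon)$ then gives the stated $O\bigl(d\epsilon^{-3}\ln(1/(\epsilon\delta))\bigr)$ query count.
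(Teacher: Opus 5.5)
Your architecture matches the paper's. The outer loop is an inexact proximal point method built on the lifting. The inner loop is \texttt{ZOGD} with $\eta_t=3/(m(t+t_0))$ and weighted averaging, and its rate comes from the generalized Freedman inequality with self-bounded variance. Failure probabilities are summed over outer iterations.

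Two pieces genuinely differ. (i) In the outer loop you optimize $\theta$ at each step, giving $r_n\le\phi(r_{n-1})+e_n$ with $\phi(r)=\max\{r/2,\,r-r^2/(4mL^2)\}$. Since $\phi$ is nondecreasing, induction with $e_n=O(1/n^2)$ does give $O(1/N)$. The paper instead fixes $\alpha=2/(n+1)$ and telescopes to the explicit bound of \cref{lemma:outer-loop_convergence}, which needs no case analysis and makes the weighting $n(n+1)\varepsilon_{\mathrm{in}}(n)$ visible. (ii) For feasibility you track $x_t-x_0$, which obeys the exact contraction $x_t-x_0=(1-2m\eta_t)(x_{t-1}-x_0)-\eta_t g_t$; its drift part is deterministically at most $G/(2m)$. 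The paper instead tracks $\|x_t-x_\nu^\star\|^2$ via strong convexity of $F_{\nu,x_0}$ (events $E_1,E_2$). Your route needs neither strong convexity nor the location of $x_\nu^\star$ for feasibility. The paper's route reuses $\|x_{t-1}-x_\nu^\star\|^2\le 60\Delta^2/G^2$ directly as the variance proxy in the rate analysis.

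The feasibility step, which you flag as the main obstacle, is not closed by what you wrote. With $\epsilon_k=g_k-\nabla f_\nu(x_{k-1})$, the noise part $N_t=\sum_{k\le t}\bigl[\prod_{j=k+1}^{t}(1-2m\eta_j)\bigr]\eta_k\epsilon_k$ is not a martingale in $t$, because its weights change with $t$. A fixed-$t$ Azuma/Freedman bound plus a stopping time therefore does not give $\sup_{t\le T}\|N_t\|\le G/m$: the exit event is a union over $t$. A union bound over $t$ costs $\ln T\gtrsim\ln d$ and forces $t_0\gtrsim d\ln(dn/\delta)$, not the stated $\Theta(d\ln(n/\delta))$; the hypothesis $\delta<1/\ln d$ only absorbs a $\ln\ln d$.

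The paper obtains time-uniformity from \cref{theorem:stitched_boundary} and \cref{lemma:time-uniform_poly_growth_v}. In your formulation there is a short fix. Use $N_t=(1-2m\eta_t)N_{t-1}-\eta_t\epsilon_t$, Cauchy--Schwarz, \cref{lemma:sub-Gaussian_zero_mean_MGF} and \eqref{eq:sub-Gaussian_norm_centered_grad}. One checks that for $\lambda=m^2t_0/(144G^2d)$ the stopped process $\exp\bigl(\lambda\|N_t\|^2-8\ln 2\,\lambda G^2d\sum_{k\le t}\eta_k^2\bigr)$ is a nonnegative supermartingale. Since $\lambda G^2d\sum_k\eta_k^2\le 1/16$, Ville's inequality gives $\sup_t\|N_t\|\le G/m$ with probability $1-\delta'$ once $t_0\ge 144d(\ln(1/\delta')+1)$.

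A smaller correction concerns why constants stay uniform across outer iterations. The per-step displacement cap $G/m$ does not prevent $f(x(n))$ from creeping up over many outer steps. Uniformity comes instead from $f(x(n))\le F_{x(n-1)}(x(n))\le f(x(n-1))+e_n$ together with an explicit budget $\sum_n e_n\le\Delta_0$; the paper takes $\varepsilon_{\mathrm{in}}(n)=\Delta_0/(n(n+1))$. This keeps $f(x(n-1))\le 2\Delta_0$, so the inner iterates stay inside $\mathcal{S}(24\Delta_0)$, where $G$, $\mu$ and $m=\max\{\mu,G^2/(2\Delta_0)\}$ are calibrated. Knowing only $f(x(n-1))\le 24\Delta_0$, as your text suggests, would let inner iterates leave that set. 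With these two repairs your argument goes through.
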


\Cref{theorem:main} presents the high probability convergence rate and oracle complexity guarantees for \cref{algorithm:main}. We make some comparisons of our result with the recent literature~\cite{fatkhullin2025global}.
\begin{itemize}
\item The work~\cite{fatkhullin2025global} also proposed (inexact) proximal point algorithms for unconstrained and functionally constrained hidden convex problems. However, \cite{fatkhullin2025global} mainly focuses on a simpler type of hidden convexity, where a bijective transformation $c:\mathcal{X}\rightarrow\mathbb{R}^d$ directly convexifies a nonconvex function $f:\mathcal{X}\rightarrow\mathbb{R}$ via $f(x)=h\circ c(x)$ for some convex $h:c(\mathcal{X})\rightarrow\mathbb{R}$; note that the domain $\mathcal{X}$ is assumed to be convex and closed in \cite{fatkhullin2025global}. Although~\cite{fatkhullin2025global} also studied functionally constrained problems, and discussed generalizations where the invertibility of $c$ can be relaxed, it is not yet clear whether such extensions are directly applicable to the state-feedback $\mathcal{H}_\infty$ policy optimization problem.

\item In terms of the complexity bound, \cite{fatkhullin2025global} showed that their proposed algorithms achieve a first-order oracle complexity of $\widetilde{O}\!\left(\epsilon^{-3}\right)$ for the nonsmooth setting (see Table 1 therein), which is the best-known result for first-order methods applied to nonsmooth hidden convex problems. It can be seen that the complexity bound of our proposed zeroth-order algorithm matches their result except for an additional $O(d)$ factor. We point out that this $O(d)$ factor seems prevalent for derivative-free methods based on zeroth-order gradient estimators \cite{duchi2015optimal,shamir2017optimal,nesterov2017random,kornowski2024optimal}.
\end{itemize}

\subsection{Consequences for State-Feedback $\mathcal{H}_\infty$ Policy Optimization}
\label{subsec:H-infty_policy_optimization_result}

In this subsection, we shift our focus to state-feedback $\mathcal{H}_\infty$ policy optimization, and show how to construct a convex lifting so that the theoretical results in \cref{subsec:theoretical_results} can be applied. As mentioned before, it is known in classical control theory that, via the bounded real lemma and a suitable change of variable, the state-feedback $\mathcal{H}_\infty$ control problem can be reformulated as a convex semidefinite program. Some recent works~\cite{guo2022global,watanabe2025policy} have exploited this convex reformulation to investigate structural properties of the corresponding policy optimization problem~\cref{eq:Hinf_policy_optimization}, and \cite{zheng2026benign2} further proposed the unified extended convex lifting framework that is applicable to a broad range of benchmark control problems. Here we give an explicit construction based on the approach in~\cite{watanabe2025policy}.

The construction of a convex lifting for the state-feedback $\mathcal{H}_\infty$ policy optimization problem~\cref{eq:Hinf_policy_optimization} consists of the following steps:

\noindent\textbf{Step 1: Lifting.} We define the lifted set $\mathfrak{L}_{\mathrm{lft}}$ as
\begin{subequations}
\label{eq:Hinf_ECL_Llft}
\begin{equation}
\mathfrak{L}_{\mathrm{lft}}
\coloneqq
\setv*{(K,\gamma,P)\in\mathbb{R}^{n_u\times n_x}\times\mathbb{R}_{++}\times\mathbbm{S}_{++}^{n_x}}{
\mathscr{M}_{\mathrm{lft}}(K,\gamma,P)\preceq 0
},
\end{equation}
where
\begin{equation}
\mathscr{M}_{\mathrm{lft}}(K,\gamma,P)
=\begin{bmatrix}
(A\!+\!BK)^\tran P(A\!+\!BK) - P + Q+K^\tran RK & (A\!+\!BK)^\tran PB_w \\
B_w^\tran P(A\!+\!BK) & B_w^\tran PB_w-\gamma^2 I
\end{bmatrix}.
\end{equation}
\end{subequations}

\noindent\textbf{Step 2: Convex set.} We then define the convex set $\mathfrak{F}_{\mathrm{cvx}}$ by
\begin{subequations}
\label{eq:Hinf_ECL_Fcvx}
\begin{equation}
\mathfrak{F}_{\mathrm{cvx}}
=\setv*{(\gamma,Y,X)\in\mathbb{R}_{++}\times \mathbb{R}^{n_u\times n_x}\times\mathbbm{S}_{++}^{n_x}}{
\mathscr{M}_{\mathrm{cvx}}(\gamma,Y,X)\succeq 0
},
\end{equation}
where
\begin{equation}
\mathscr{M}_{\mathrm{cvx}}(\gamma,Y,X)
\coloneqq
\begin{bmatrix}
X & 0 & (AX\!+\!BY)^\tran & X & Y^\tran \\
0 & \gamma I & B_w^\tran & 0 & 0 \\
AX\!+\!BY & B_w & X & 0 & 0 \\
X & 0 & 0 & \gamma Q^{-1} & 0 \\
Y & 0 & 0 & 0 & \gamma R^{-1}
\end{bmatrix}.
\end{equation}
\end{subequations}
It is evident that $\mathscr{M}_{\mathrm{cvx}}(\gamma,Y,X)$ is affine in $(\gamma,Y,X)$, and thus $\mathfrak{F}_{\mathrm{cvx}}$ is indeed convex.

\noindent{}\textbf{Step 3: Smooth bijection.} We now define the mapping $\Phi:\mathfrak{L}_{\mathrm{lft}}\rightarrow \mathbb{R}_{++}\times \mathbb{R}^{n_u\times n_x}\times\mathbbm{S}_{++}^{n_x}$ by
\begin{equation}
\label{eq:Hinf_ECL_smooth_bijection}
\Phi(K,\gamma,P) = (\gamma, \gamma KP^{-1}, \gamma P^{-1}).
\end{equation}
The mapping $\Phi$ obviously satisfies the second condition in \cref{assumption:convex_lifting}.

The following proposition justifies that the triple $(\mathfrak{L}_{\mathrm{lft}},\mathfrak{F}_{\mathrm{cvx}},\Phi)$ constructed above is indeed a convex lifting for the objective function $J_\infty$ in the sense of \cref{assumption:convex_lifting}. The proof is given in \cref{appendix:proof_convex_lifting_Hinf}.
\begin{proposition}
\label{proposition:convex_lifting_Hinf}
Let $\mathfrak{L}_{\mathrm{lft}},\mathfrak{F}_{\mathrm{cvx}}$ and $\Phi$ be given by~\cref{eq:Hinf_ECL_Llft,eq:Hinf_ECL_Fcvx,eq:Hinf_ECL_smooth_bijection}, respectively. Then
\begin{enumerate}
\item For any $K\in\mathbb{R}^{n_u\times n_x}$ and $\gamma>0$, we have $(K,\gamma)\in\operatorname{epi}(J_\infty)$ if and only if there exists $P\in\mathbbm{S}_{++}^{n_x}$ such that $(K,\gamma,P)\in\mathfrak{L}_{\mathrm{lft}}$.
\item $\Phi$ is a bijection from $\mathfrak{L}_{\mathrm{lft}}$ to $\mathfrak{F}_{\mathrm{cvx}}$, and both $\Phi$ and $\Phi^{-1}$ are infinitely differentiable.
\end{enumerate}
\end{proposition}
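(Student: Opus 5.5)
Part 1 follows from the discrete-time bounded real lemma, and Part 2 from Schur complements applied to $\mathscr{M}_{\mathrm{cvx}}$ together with the explicit inverse map.

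\textbf{Part 1.} For a Schur stable closed loop $A_K\coloneqq A+BK$ with output weight $C_K^\tran C_K = Q+K^\tran RK$, the nonstrict bounded real lemma states that $J_\infty(K)\le\gamma$ if and only if there is $P\succeq 0$ with
\[
\begin{bmatrix} A_K^\tran PA_K-P+C_K^\tran C_K & A_K^\tran PB_w\\ B_w^\tran PA_K & B_w^\tran PB_w-\gamma^2 I\end{bmatrix}\preceq 0 .
\]
Here $J_\infty(K)^2$ is the squared $\mathcal{H}_\infty$ norm of $w\mapsto C_Kx$. This matrix is exactly $\mathscr{M}_{\mathrm{lft}}(K,\gamma,P)$.

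\emph{Necessity} ($\Leftarrow$). The $(1,1)$ block gives $A_K^\tran PA_K-P\preceq-(Q+K^\tran RK)\prec 0$ with $P\succ0$, so $A_K$ is Schur stable by Lyapunov. Then a dissipation argument bounds the cost. Multiply the LMI by $(x(t),w(t))$ and sum over $t$, using $x(0)=0$ and $x(t)\to 0$. This gives $\sum_t x^\tran(Q+K^\tran RK)x\le\gamma^2\|w\|_{\ell_2}^2$, i.e.\ $J_\infty(K)\le\gamma$.

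\emph{Sufficiency} ($\Rightarrow$). Take $P$ to be the stabilizing solution of the $\mathcal{H}_\infty$ Riccati inequality, or the value function $P$ of the worst-case problem $\sup_w\sum_t(x^\tran C_K^\tran C_K x-\gamma^2\|w\|^2)$ from a given initial state. It is finite because $J_\infty(K)\le\gamma$, and it satisfies the dissipation inequality, which is equivalent to $\mathscr{M}_{\mathrm{lft}}\preceq0$. Moreover $P\succeq Q\succ0$, since taking $w=0$ already incurs stage cost $x^\tran Qx$. The boundary case $\gamma=J_\infty(K)$ is the only delicate point. I would handle it by taking a limit of the solutions $P_{\gamma'}$ for $\gamma'\downarrow\gamma$. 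These are bounded: $B_w^\tran P_{\gamma'}B_w\preceq\gamma'^2I$ and $W\succ0$ bound $P$ in the directions spanned by $B_w$, and a Lyapunov comparison bounds it elsewhere. Closedness of the LMI then gives the limit. I expect this nonstrict-inequality/limit step to be the main obstacle.

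\textbf{Part 2.} Write $X=\gamma P^{-1}$ and $Y=\gamma KP^{-1}=KX$, so $AX+BY=A_KX$. The inverse is $\Phi^{-1}(\gamma,Y,X)=(\gamma,YX^{-1},\gamma X^{-1})$. Both $\Phi$ and $\Phi^{-1}$ are rational in their entries with nonvanishing denominators on $\mathbb{R}_{++}\times\mathbbm{S}_{++}$, hence $C^\infty$, and they are mutually inverse on $\mathbb{R}_{++}\times\mathbb{R}^{n_u\times n_x}\times\mathbbm{S}^{n_x}_{++}$.

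It therefore suffices to show $\mathscr{M}_{\mathrm{lft}}(K,\gamma,P)\preceq0\iff\mathscr{M}_{\mathrm{cvx}}(\gamma,KX,X)\succeq0$ whenever $X=\gamma P^{-1}$. The chain of equivalences is:
\begin{enumerate}
\item Take the Schur complement of $\mathscr{M}_{\mathrm{cvx}}$ with respect to its positive definite blocks $X$ (third block), $\gamma Q^{-1}$ and $\gamma R^{-1}$. This yields
\[
\begin{bmatrix} X & 0\\0&\gamma I\end{bmatrix}-\begin{bmatrix}XA_K^\tran\\ B_w^\tran\end{bmatrix}X^{-1}\begin{bmatrix}A_KX & B_w\end{bmatrix}-\gamma^{-1}\begin{bmatrix}X(Q+K^\tran RK)X & 0\\0&0\end{bmatrix}\succeq0 .
\]
\item Congruence by $\operatorname{diag}(X^{-1},I)$ and multiplication by $\gamma$ give
\[
\begin{bmatrix}P-A_K^\tran PA_K-(Q+K^\tran RK) & -A_K^\tran PB_w\\ -B_w^\tran PA_K & \gamma^2 I-B_w^\tran PB_w\end{bmatrix}\succeq0,
\]
using $\gamma X^{-1}=P$.
\item This is exactly $-\mathscr{M}_{\mathrm{lft}}\succeq0$.
\end{enumerate}
Each step is reversible because $X$, $Q$ and $R$ are positive definite, so $\Phi$ maps $\mathfrak{L}_{\mathrm{lft}}$ onto $\mathfrak{F}_{\mathrm{cvx}}$ bijectively.
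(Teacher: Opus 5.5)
Your proposal is correct. Part 2 is the paper's argument run in the opposite direction: Schur complement of $\mathscr{M}_{\mathrm{cvx}}$ with respect to the block-diagonal $\operatorname{diag}(X,\gamma Q^{-1},\gamma R^{-1})$, then congruence by $\operatorname{diag}(X^{-1},I)$ and scaling by $\gamma$. The only slip is harmless: $\Phi^{-1}(\gamma,Y,X)$ should be listed as $(YX^{-1},\gamma,\gamma X^{-1})$ to match the ordering $(K,\gamma,P)$.

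Part 1 is where you take a genuinely different route. The paper proves nothing by hand there. It cites the non-strict discrete-time bounded real lemma of Vaidyanathan and checks its hypotheses: $(A+BK,B_w)$ is controllable because $B_w$ has full row rank, and $(C_K,A+BK)$ is observable because $C_K$ has full column rank when $Q\succ 0$. That lemma then delivers Schur stability, $P\succ 0$ and the boundary case $\gamma=J_\infty(K)$ in one stroke. You instead rebuild the lemma. For $(\Leftarrow)$ you use Lyapunov plus a summed dissipation inequality; for $(\Rightarrow)$ you use a worst-case value function when $\gamma>J_\infty(K)$ and a compactness limit when $\gamma=J_\infty(K)$. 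This is more self-contained and shows exactly where $Q\succ0$ and $W\succ0$ enter. The cost is that it relies on the standard but only sketched facts that the value function is a finite quadratic form obeying the Bellman inequality.

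Your finiteness claim is immediate only for $\gamma>J_\infty(K)$. Splitting the output into free and forced responses gives an upper bound that stays bounded in $\|w\|_{\ell_2}$ only then, which is exactly why the limit is needed. That limit step is easier than you expect. Since $W\succ0$, the $(2,2)$ block alone gives $P_{\gamma'}=W^{-1}B_w(B_w^\tran P_{\gamma'}B_w)B_w^\tran W^{-1}\preceq\gamma'^2W^{-1}$, so no separate Lyapunov comparison is needed. The $(1,1)$ block gives $P_{\gamma'}\succeq Q$. Hence a subsequential limit exists, is positive definite, and satisfies the closed LMI. Alternatively, one-step reachability ($x_0=B_w w$ for some $w$) bounds the available storage by a finite required supply even at $\gamma=J_\infty(K)$, which removes the need for the limit altogether.
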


Furthermore, the following proposition shows that the technical assumptions listed in \cref{assumption:technical_objective} are also satisfied by $J_\infty$ and its convex lifting.

\begin{proposition}
\label{proposition:Hinf_analytical_properties}
Given $J_\infty$ and its convex lifting $(\mathfrak{L}_{\mathrm{lft}},\mathfrak{F}_{\mathrm{cvx}},\Phi)$ constructed by ~\cref{eq:Hinf_ECL_Llft,eq:Hinf_ECL_Fcvx,eq:Hinf_ECL_smooth_bijection}, for each $\Delta>0$, we have:
\begin{enumerate}
\item The set $\setc*{(K,\gamma,P)\in\mathfrak{L}_{\mathrm{lft}}}{\gamma\leq\Delta}$
is compact.

\item There exists some $G_\Delta>0$ such that $J_\infty$ is $G_\Delta$-Lipschitz continuous on the sublevel set $\mathcal{S}(\Delta)=\setc*{K\in\mathcal{K}}{J_\infty(K)\leq\Delta}$.

\item There exists some $\mu_\Delta>0$ such that $J_\infty$ is $\mu_\Delta$-weakly convex on any convex subset of the sublevel set $\mathcal{S}(\Delta)$.
\end{enumerate}
\end{proposition}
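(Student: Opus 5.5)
We prove the three parts of \cref{proposition:Hinf_analytical_properties} separately, using the convex lifting of \cref{proposition:convex_lifting_Hinf}.

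\textbf{Part 1 (compactness).} We show that the set $\mathcal{L}_\Delta\coloneqq\{(K,\gamma,P)\in\mathfrak{L}_{\mathrm{lft}}:\gamma\le\Delta\}$ is bounded and closed.

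For boundedness, take $(K,\gamma,P)\in\mathcal{L}_\Delta$ and look at the top-left block of $\mathscr{M}_{\mathrm{lft}}\preceq 0$:
\[
(A+BK)^\tran P(A+BK)-P+Q+K^\tran RK\preceq 0.
\]
This says $P$ dominates the observability Gramian of $A+BK$ with weight $Q+K^\tran RK$. The lower-right block gives $B_w^\tran PB_w\preceq\gamma^2 I$. Combining these with the Schur complement of the full matrix, the standard bounded real lemma bound gives
\[
\operatorname{tr}(PW)\le n_w\Delta^2, \qquad W=B_wB_w^\tran\succ 0 .
\]
Hence $\|P\|\le n_w\Delta^2/\lambda_{\min}(W)$. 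Positivity of the iterates then gives $P\succeq Q+K^\tran RK$, so $\lambda_{\min}(P)\ge\lambda_{\min}(Q)$ and $\|K\|^2\le\|P\|/\lambda_{\min}(R)$.

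For closedness, let $(K_k,\gamma_k,P_k)\in\mathcal{L}_\Delta$ converge. The limit still satisfies $\mathscr{M}_{\mathrm{lft}}\preceq0$, since this is a closed condition. It has $P\succeq\lambda_{\min}(Q)I\succ0$. Finally $\gamma\ge\lambda_{\min}(\cdot)$ is bounded below: $B_w^\tran PB_w\preceq\gamma^2I$ together with $P\succeq\lambda_{\min}(Q)I$ and $B_w\neq0$ gives $\gamma^2\ge\lambda_{\min}(Q)\|B_w\|^2>0$. So the limit stays in $\mathbb{R}_{++}\times\mathbbm{S}^{n_x}_{++}$.

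Compactness of $\mathcal{S}(\Delta)$ then follows by projection.

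\textbf{Part 2 (Lipschitz continuity).} We first show that $J_\infty$ is locally Lipschitz on $\mathcal{K}$. Since $J_\infty(K)=\|\,(Q+K^\tran RK)^{1/2}(zI-A-BK)^{-1}B_w\|_{\mathcal{H}_\infty}$, it is a supremum over $\theta$ of $\sigma_{\max}$ of a family that is smooth in $K$ with uniformly bounded derivatives on compact subsets of $\mathcal{K}$. A supremum of uniformly Lipschitz functions is Lipschitz, so $J_\infty$ is locally Lipschitz.

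This local statement alone does not give a global constant on $\mathcal{S}(\Delta)$, which need not be convex. To fix this, use compactness of $\mathcal{S}(\Delta)$ and openness of $\mathcal{K}$. Choose $r>0$ such that $\mathcal{S}(\Delta)+r\mathbb{B}\subseteq\mathcal{S}(2\Delta)\subset\mathcal{K}$, which is possible by continuity and compactness.

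On the compact set $\mathcal{S}(2\Delta)$, local Lipschitz continuity gives a finite cover by balls on which $J_\infty$ is Lipschitz. Now take $x,y\in\mathcal{S}(\Delta)$:
\begin{itemize}
\item if $\|x-y\|<r$, the segment between them lies in $\mathcal{S}(\Delta)+r\mathbb{B}$, and the local constant applies;
\item otherwise, $|J(x)-J(y)|\le\Delta\le(\Delta/r)\|x-y\|$.
\end{itemize}
Taking the larger of the two constants gives $G_\Delta$.

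\textbf{Part 3 (weak convexity).} Here we use the lifting, following the argument in \cite{watanabe2025policy}, which we expect to be the main obstacle. Take a convex $C\subseteq\mathcal{S}(\Delta)$ and $K_0,K_1\in C$ with $K_s=(1-s)K_0+sK_1$.

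\begin{enumerate}
\item For each $K_i$, pick an optimal $P_i$ with $(K_i,J(K_i),P_i)\in\mathcal{L}_\Delta$. These exist by compactness.
\item Map to the convex set: let $(\gamma_i,Y_i,X_i)=\Phi(K_i,J(K_i),P_i)$ and take the convex combination $(\gamma_s,Y_s,X_s)\in\mathfrak{F}_{\mathrm{cvx}}$.
\item Pull back with $\Phi^{-1}$. This yields $\tilde K_s=Y_sX_s^{-1}$ with $J(\tilde K_s)\le\gamma_s=(1-s)J(K_0)+sJ(K_1)$.
\item Bound the discrepancy between $\tilde K_s$ and $K_s$:
\[
\tilde K_s-K_s=s(1-s)(Y_1X_1^{-1}\!-\!Y_0X_0^{-1})(X_0-X_1)X_s^{-1}\cdot(\text{bounded}),
\]
so $\|\tilde K_s-K_s\|\le c\,s(1-s)\|K_0-K_1\|\,\|X_0-X_1\|$.
\end{enumerate}

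The difficulty is controlling $\|X_0-X_1\|$ linearly by $\|K_0-K_1\|$. The optimal $P$ need not be Lipschitz in $K$, because $\gamma$-optimality is not unique. Rather than bounding $\|X_0-X_1\|$ directly, the plan is to use a midpoint/local argument: restrict to $\|K_0-K_1\|$ small and choose $P_1$ near $P_0$ by an implicit-function perturbation. Concretely, $P_0$ remains feasible for $K_1$ at level $J(K_0)+O(\|K_1-K_0\|)$, so we may use non-optimal but feasible $P$'s at the cost of a $G\|K_0-K_1\|$-type slack.

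With this choice, the discrepancy becomes $O(s(1-s)\|K_0-K_1\|^2)$. Combining it with the Lipschitz bound from Part 2 applied to $J(K_s)-J(\tilde K_s)$ gives
\[
J(K_s)\le(1-s)J(K_0)+sJ(K_1)+\tfrac{\mu_\Delta}{2}s(1-s)\|K_0-K_1\|^2,
\]
which is weak convexity locally. Since weak convexity is a local property along segments, the local inequality extends to all of $C$.

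Boundedness of $X_s^{-1}$ and of the constants comes from Part 1's compactness, applied at a slightly enlarged level so that the slack terms stay inside a compact set.
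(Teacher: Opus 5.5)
Your Parts 1 and 2 are sound. In Part 1, your inequalities $B_w^\tran PB_w\preceq\gamma^2 I$ (which alone gives $\operatorname{tr}(PW)\le n_w\Delta^2$, with no Schur complement needed) and $P\succeq Q+K^\tran RK$ are exactly the paper's $X\succeq\gamma^{-1}W$ and $X\succeq\gamma^{-1}(XQX+Y^\tran RY)$, rewritten in the lifted coordinates. So you obtain compactness directly, instead of proving it on $\mathfrak{F}_{\mathrm{cvx}}$ and pulling back through $\Phi^{-1}$. For Parts 2 and 3 the paper gives no proof; it cites \cite[Proposition~2]{guo2022global} and \cite[Theorem~1]{watanabe2025policy}. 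Your local-to-semi-global argument is a valid self-contained proof of Part 2.

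Part 3 has a genuine gap. Your identity $\tilde K_s-K_s=s(1-s)(K_0-K_1)(X_0-X_1)X_s^{-1}$ is correct. However, weak convexity needs the certified level to equal $(1-s)J_\infty(K_0)+sJ_\infty(K_1)$ up to an error of order $s(1-s)\|K_0-K_1\|_F^2$. With exact certificates ($\gamma_i=J_\infty(K_i)$) this requires $\|X_0-X_1\|\lesssim\|K_0-K_1\|$, i.e.\ a Lipschitz selection of optimal certificates, which you rightly say is unavailable. Your workaround uses a feasible $P_1$ near $P_0$ at a level $\gamma_1'$ with $\gamma_1'-J_\infty(K_1)$ of order $\|K_0-K_1\|$ (your own ``$G\|K_0-K_1\|$-type slack''). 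The convex combination then only gives $J_\infty(\tilde K_s)\le(1-s)J_\infty(K_0)+sJ_\infty(K_1)+s(\gamma_1'-J_\infty(K_1))$. This first-order term cannot be absorbed into $\frac{\mu}{2}s(1-s)\|K_0-K_1\|_F^2$: already at $s=1/2$ it dominates as $\|K_0-K_1\|\to0$. So restricting to nearby pairs makes things worse, and there is no local inequality to globalize. Also, $P_0$ itself need not be feasible for $K_1$ at any level. The $(1,1)$ block of $\mathscr{M}_{\mathrm{lft}}$ does not involve $\gamma$ and can become indefinite if it is singular at $K_0$; inflating $P_0$ to fix this again costs first order in $\gamma$.

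A route that works uses only the ingredients of your Part 2 and no lifting. Write $J_\infty(K)=\max\operatorname{Re}\big(u^*\mathbf{T}_K(e^{\uj\omega})v\big)$ over $\omega\in[0,2\pi]$ and unit vectors $u,v$. Each term is $C^\infty$ in $K$ on the open set $\mathcal{K}$. By joint continuity, its Hessian is bounded by some $\mu_\Delta$ uniformly over the compact parameter set and the compact $\mathcal{S}(\Delta)$. Hence on any convex $C\subseteq\mathcal{S}(\Delta)$, each term plus $\frac{\mu_\Delta}{2}\|K\|_F^2$ is convex along segments of $C$, and so is their supremum.
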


The second statement of \cref{proposition:Hinf_analytical_properties} has been established in~\cite[Proposition~2]{guo2022global}, while the third statement has been proved in~\cite[Theorem~1]{watanabe2025policy}. A proof of the first statement will be provided in \cref{appendix:proof_Hinf_analytical_properties}.

Combining the above propositions with \cref{theorem:main}, we get the following corollary, which establishes oracle complexity bounds in terms of the objective value gap for state-feedback $\mathcal{H}_\infty$ policy optimization by \cref{algorithm:main}.

\begin{corollary}
\label{corollary:Hinf_complexity_result}
Let $K(0)\in\mathcal{K}$ be any stabilizing policy, let $K(N)$ denote the output of \cref{algorithm:main} applied to~\cref{eq:Hinf_policy_optimization} with initial point $K(0)$, and let $J_\infty^\ast$ denote the optimal value of~\cref{eq:Hinf_policy_optimization}. Then, for any sufficiently small $\epsilon>0$ and $\delta>0$, by appropriately choosing the algorithmic parameters, the number of zeroth-order queries needed to achieve
\[
J_\infty(K(N))-J_\infty^\ast\leq\epsilon
\]
with probability at least $1-\delta$ is bounded by
\[
O\!\left(\frac{n_un_x}{\epsilon^3}\ln\frac{1}{\epsilon\delta}\right).
\]
\end{corollary}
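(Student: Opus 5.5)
The plan is to obtain the corollary by combining \cref{proposition:convex_lifting_Hinf}, \cref{proposition:Hinf_analytical_properties} and \cref{theorem:main}, with $f=J_\infty$, $\mathcal{D}=\mathcal{K}$ and $d=n_un_x$. We identify $\mathbb{R}^{n_u\times n_x}$ with $\mathbb{R}^{d}$ through vectorization; the Frobenius norm on $K$ then becomes the Euclidean norm. We likewise identify $\mathbbm{S}^{n_x}$ with $\mathbb{R}^{n_x(n_x+1)/2}$, so $d_\xi=n_x(n_x+1)/2$. The triple $(\mathfrak{L}_{\mathrm{lft}},\mathfrak{F}_{\mathrm{cvx}},\Phi)$ then lives in the ambient spaces required by \cref{assumption:convex_lifting}.

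\textbf{Step 1: the standing hypotheses.}
\begin{enumerate}
\item \emph{Domain and continuity.} $\mathcal{K}$ is open, and $J_\infty$ is continuous on it, by the first proposition of the paper.
\item \emph{Positivity of $f^\ast$.} We have $J_\infty^\ast>0$. One way to see this: $J_\infty(K)\ge \lambda_{\min}(Q)^{1/2}\cdot$ (response to a unit impulse disturbance), and this is bounded below because $W\succ0$.
\item \emph{Dimension.} The paper assumes $d\ge3$; if $n_un_x<3$, we pad with dummy coordinates, or simply note the assumption.
\end{enumerate}

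\textbf{Step 2: \cref{assumption:convex_lifting}.}
\begin{enumerate}
\item \emph{Condition 1.} By \cref{proposition:convex_lifting_Hinf}(1), $\pi_{x,\lambda}(\mathfrak{L}_{\mathrm{lft}})=\operatorname{epi}(J_\infty)$ for $\gamma>0$. Pairs with $\gamma\le0$ are excluded on both sides, since $J_\infty>0$ and $\mathfrak{L}_{\mathrm{lft}}$ requires $\gamma\in\mathbb{R}_{++}$.
\item \emph{Condition 2.} This holds by the form of $\Phi$ in \cref{eq:Hinf_ECL_smooth_bijection}.
\item \emph{Convexity of $\mathfrak{F}_{\mathrm{cvx}}$.} It is the intersection of an affine LMI set with the convex cone $\mathbb{R}_{++}\times\mathbb{R}^{n_u\times n_x}\times\mathbbm{S}^{n_x}_{++}$.
\item \emph{Condition 3.} By \cref{proposition:convex_lifting_Hinf}(2), $\Phi$ and $\Phi^{-1}$ are $C^\infty$ on their (relatively open) domains, so local Lipschitz continuity follows from boundedness of derivatives on compact neighborhoods. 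The one subtlety is that $\mathfrak{L}_{\mathrm{lft}}$ is not open. Local Lipschitzness is nevertheless inherited: $\Phi$ extends smoothly to the open set $\mathbb{R}^{n_u\times n_x}\times\mathbb{R}_{++}\times\mathbbm{S}^{n_x}_{++}$, and $\Phi^{-1}(\gamma,Y,X)=(YX^{-1},\gamma,\gamma X^{-1})$ extends to $\mathbb{R}_{++}\times\mathbb{R}^{n_u\times n_x}\times\mathbbm{S}^{n_x}_{++}$.
\end{enumerate}

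\textbf{Step 3: \cref{assumption:technical_objective}.} Its three conditions are exactly the three items of \cref{proposition:Hinf_analytical_properties}. The sublevel set $\mathcal{S}(\Delta)$ in item 1 is the projection of the compact set, as stated in the assumption.

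\textbf{Step 4: invoking \cref{theorem:main}.} Fix $\Delta_0\ge J_\infty(K(0))$ and the parameters $m,t_0(n),\nu(n),T(n)$ prescribed in \cref{theorem:main}, with $d=n_un_x$. The theorem is stated for $\delta<\min\{1/5,1/\ln d\}$, which is covered by ``sufficiently small $\delta$''. It then gives
\[
O\!\left(\frac{n_un_x}{\epsilon^{3}}\ln\frac{1}{\epsilon\delta}\right)
\]
queries for $J_\infty(K(N))-J_\infty^\ast\le\epsilon$ with probability at least $1-\delta$.

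The only potentially delicate point is the local-Lipschitz and extension argument in Step 2, together with confirming $f^\ast>0$. Both are routine.
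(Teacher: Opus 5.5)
Your proposal is correct and follows the paper's route: verify \cref{assumption:convex_lifting,assumption:technical_objective} via \cref{proposition:convex_lifting_Hinf,proposition:Hinf_analytical_properties}, then invoke \cref{theorem:main} with $d=n_un_x$. Your additional checks ($J_\infty^\ast>0$, the case $\gamma\le 0$, and local Lipschitzness via the smooth extensions of $\Phi$ and $\Phi^{-1}$ to open sets) make explicit what the paper leaves implicit, but drop the padding idea for $n_un_x<3$: appending dummy coordinates on which the objective does not depend turns the sublevel sets into $\mathcal{S}(\Delta)\times\mathbb{R}^k$ and destroys the compactness in Part~1 of \cref{assumption:technical_objective}, so you should simply inherit the paper's standing assumption $d\geq 3$.
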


We next provide further discussions on the complexity result of \cref{corollary:Hinf_complexity_result} compared with the existing works~\cite{guo2022global,guo2023complexity,watanabe2025policy}.

\begin{itemize}
\item The work~\cite{guo2022global} studied Goldstein's subgradient method for the state-feedback $\mathcal{H}_\infty$ policy optimization problem:
\[
K(n)=K(n-1)-\eta(n)F(n)/\|F(n)\|,
\]
where $F(n)$ is the minimum-norm element of the $\eta(n)$-Goldstein subdifferential of $J_\infty$ at $K(n-1)$. While the authors showed that by setting $\eta(n)\propto 1/n$ one has $J_\infty(K(n))- J_\infty^\ast\rightarrow 0$, the rate of convergence for the objective value gap is not known. The authors also showed that by setting $\eta(n)=\eta$, the number of oracle queries needed to find an $(\eta,\epsilon)$-Goldstein stationary point is upper bounded by $O(1/(\eta\epsilon))$. However, as commented in~\cite{guo2022global}, $(\eta,\epsilon)$-Goldstein stationarity does not necessarily imply a small objective value gap even for very small $\eta$ and $\epsilon$.

Different from~\cite{guo2022global}, our work establishes convergence rate and oracle complexity results in terms of the objective value gap $J_\infty(K(N))-J_\infty^\ast$, i.e., we provide a complexity result for achieving \emph{global optimality} rather than \emph{stationarity}.

\item The work~\cite{guo2023complexity} proposed a zeroth-order algorithm for state-feedback $\mathcal{H}_\infty$ policy optimization. The authors showed that, by properly choosing the algorithmic parameters, there exists $p\in(0,1)$ such that i) all intermediate iterates will stay in $\mathcal{K}$ with probability at least $p$; ii) the number of zeroth-order oracle queries to find a $(\nu,\epsilon)$-Goldstein stationary point with probability at least $p$ is $O(d^{3/2}/(\nu\epsilon^4))$. It can be seen that the complexity guarantee in~\cite{guo2023complexity} is still with respect to stationarity rather than global optimality; this is partly because \cite{guo2023complexity} studied the more complicated structured $\mathcal{H}_\infty$ control problem, which generally does not admit hidden convexity. Furthermore, the probability $p$ given in~\cite{guo2023complexity} cannot be made arbitrarily close to $1$. Although this defect can be mitigated by repeating the algorithm multiple times and selecting a successful trial as the final output, it makes the algorithm and analysis less elegant.

Contrary to~\cite{guo2023complexity}, given an arbitrarily small $\delta>0$, in order for \cref{algorithm:main} to run successfully and return a near globally optimal point with probability $1-\delta$, one does not need to repeat the whole optimization procedure $\Theta(\ln(1/\delta))$ times and select one successful trial.

We mention that \cite{guo2023complexity} also investigated the case where the $\mathcal{H}_\infty$ cost is estimated from samples, a case not covered by this article. It would be an interesting future direction to combine the sample-based $\mathcal{H}_\infty$ cost estimation method in~\cite{guo2023complexity} with our proposed algorithm and establish its sample complexity bound.

\item The work~\cite{watanabe2025policy} established important analytical properties of $J_\infty$ (including weak convexity, weak PL property, etc.), and investigated the subgradient descent method for state-feedback $\mathcal{H}_\infty$ policy optimization. Note that the complexity guarantee originally given by \cite[Theorem~3]{watanabe2025policy} is still with respect to stationarity rather than global optimality, though by exploiting the weak PL property it seems possible to obtain an $O(\epsilon^{-4})$ complexity bound for the objective value gap. Moreover, in \cite[Theorem~3]{watanabe2025policy}, it is assumed rather than derived that all intermediate iterates will stay in the domain $\mathcal{K}$ with uniformly bounded objective values.

Compared to \cite[Theorem~3]{watanabe2025policy}, our complexity bound is directly with respect to the objective value gap, and its $\widetilde{O}(\epsilon^{-3})$ dependence on $\epsilon$ is better than $O(\epsilon^{-4})$ of \cite[Theorem~3]{watanabe2025policy}. More importantly, our result does not need to impose as an assumption that all intermediate iterates are in $\mathcal{K}$; rather, we theoretically establish that this will hold with high probability.
\end{itemize}

\section{Outline of Convergence Analysis}
\label{sec:proof_outline}

In this section, we provide an outline of the proof of \cref{theorem:main}. For notational convenience, we define $F_{x_0}:\mathbb{R}^d\rightarrow\mathbb{R}$ by
\[
F_{x_0}(x)\coloneq f(x)+m\|x-x_0\|^2,
\]
where $m$ is the parameter in \cref{algorithm:main}, and $x_0$ is an arbitrary point in $\mathcal{D}$.

The complete proof of \cref{theorem:main} consists of three parts:
\begin{enumerate}
\item Convergence of the subroutine: Given any $x_0\in\mathcal{D}$, as long as the algorithmic parameters are chosen properly, the subroutine \texttt{ZOGD} is able to return a sufficiently good minimizer of $F_{x_0}(x)$ over $x\in\mathcal{D}$
with high probability.

\item Convergence of the proximal point iterations: Assuming the subroutine \texttt{ZOGD} can provide a sufficiently good minimizer of $F_{x_0}(x)$ over $x\in\mathcal{D}$, by choosing the algorithmic parameters properly, the objective value gap $f(x(N))-f^\ast$ of the outer-loop proximal point iterations will be effectively upper bounded.

\item Convergence of \cref{algorithm:main}: We combine the results from the two previous parts to finally obtain the conclusions in \cref{theorem:main}.
\end{enumerate}

The first part of our analysis is technically challenging. As mentioned before, the domain of the objective function $\mathcal{D}$ is open and nonconvex, making it difficult to maintain all the iterates within $\mathcal{D}_\nu$ so that the gradient estimator $\sfG(x_{t-1};z_t,\nu)$ is well-defined. A common technique to address this issue is to exploit the coercivity of the objective function and let each iteration be a descent step. In this way, all $x_t$ will remain in a sublevel set that has a strictly positive distance to the boundary of $\mathcal{D}$. However, for our problem setup, adopting this technique raises further challenges:
\begin{itemize}
\item The function $f$ is possibly nonsmooth in our problem setup. As a result, ensuring $F_{x_0}(x_t)\leq F_{x_0}(x_{t-1})$ for all $t\in[T]$ becomes much harder compared to the smooth case~\cite{liao2025proximal,li2025subgradient}.
\item The zeroth-order gradient estimator $\sfG(x_{t-1};z_t,\nu)$ is random, suggesting that we need to establish high-probability convergence guarantees.
\end{itemize}
The above challenges have also been identified by existing literature. In \cite{guo2022global}, the authors exploited the fact that the minimum-norm element
of the Goldstein subdifferential generates a good descent direction; however, finding this minimum-norm element is itself highly nontrivial as discussed in~\cite{guo2022global}. In~\cite{guo2023complexity}, the authors employed the zeroth-order gradient estimator~\cref{eq:grad_est_definition} in their algorithm for state-feedback $\mathcal{H}_\infty$ policy optimization, and provided guarantees on the probability that their proposed algorithm will succeed, using tools from martingale concentration and a descent lemma on the smoothed version; however, their original guarantee was weak as the probability of success cannot be made arbitrarily close to $1$ (though it can be augmented to be $1-\delta$ by repeating the procedure $\Theta(\ln(1/\delta))$ times). In~\cite{watanabe2025policy}, the authors circumvented this issue by imposing it as an assumption that all intermediate iterates lie in $\mathcal{D}$, in their convergence theorem.

In this work, we address the aforementioned challenges by a somewhat different approach: Instead of showing that $F_{x_0}(x_t)\leq F_{x_0}(x_{t-1})$ with high probability, we try to bound the distance from $x_t$ to $x_\nu^\star= \argmin_x\{f_\nu(x)+m\|x-x_0\|^2\}$. By properly setting the parameters $m,\nu$ and $t_0$, we can find a ball $x_\nu^\star+R\mathbb{B}_d$ for some $R>0$, such that i) $x_\nu^\star+R\mathbb{B}_d$ is a subset of $\mathcal{D}_\nu$; and ii) $\|x_t-x_\nu^\star\|\leq R$ with high probability. In this way, we establish that $x_t\in\mathcal{D}_\nu$ for all $t\in[T]$ with high probability.

The complete proof of convergence of the subroutine is long, and therefore we postpone it to \cref{sec:analysis_subroutine}. Here we only present the final result.

\begin{theorem}
\label{theorem:subroutine_convergence}
Consider the subroutine $\mathtt{ZOGD}(x_0,m,\nu,T,t_0)$. Let $\Delta\geq f(x_0)$, and denote $\mu=\mu_{12\Delta},G=G_{12\Delta}$. Let $\delta\in(0,\min\{1/5,1/\ln d\})$ be arbitrary. Suppose $m=\max\{\mu,G^2/\Delta\}$, $\nu\leq G/(2m)$, and for each $t\in[T]$, the step sizes are given by
\[
\eta_t=\frac{3}{m(t+t_0)},\qquad\text{where}
\quad
t_0= 50d\ln(1+7\ln(700d))+25d\ln\frac{2\pi^2}{3\delta}.
\]
Then with probability at least $1-\delta$, we have $x_t\in \mathcal{D}_\nu$ for all $t\in[T]$, and
\[
F_{x_0}(\overline{x}_T)-\inf_{x\in\mathcal{D}} F_{x_0}(x)
\leq C\!\left(\frac{d\ln(1/\delta)}{T}\cdot \Delta
+\left[\frac{1}{T}
+\left(\frac{d\ln(1/\delta)}{T}\right)^{\!2}\right]\frac{m\Delta^2}{G^2}
\right)
+\frac{3G\nu}{2},
\]
where $C\geq 1$ is some absolute numerical constant.
\end{theorem}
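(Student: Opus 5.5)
The proof splits into two phases. First we construct a convex region in which the subproblem is strongly convex and well separated from the boundary of $\mathcal{D}$. Second we run a high-probability analysis of SGD with step sizes $3/(m(t+t_0))$, stopped at the first exit time from that region.

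\textbf{Phase 1: geometry.} Let $x_\nu^\star$ minimize $F^\nu_{x_0}(x)\coloneqq f_\nu(x)+m\|x-x_0\|^2$. To control where it lies, we compare $F_{x_0}(x)$ with $F_{x_0}(x_0)=f(x_0)\le\Delta$. Any point with $F_{x_0}(x)\le 2\Delta$ satisfies $m\|x-x_0\|^2\le 2\Delta$, so $\|x-x_0\|\le\sqrt{2\Delta/m}\le\sqrt2\,\Delta/G$, using $m\ge G^2/\Delta$. We then use the convex lifting (\cref{assumption:convex_lifting}) together with \cref{assumption:technical_objective} to argue the following. The ball $B\coloneqq x_0+R\mathbb{B}_d$, with radius $R=c\Delta/G$ for a suitable absolute $c$, lies in $\mathcal{S}(12\Delta)$, and moreover $B+\nu\mathbb{B}_d\subseteq\mathcal{S}(12\Delta)$. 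The key point is Lipschitz continuity: $f$ can grow by at most $G\cdot\mathrm{dist}$ while we remain in the sublevel set. A continuity/connectedness argument along radial segments from $x_0$ then shows $f\le\Delta+G(R+\nu)\le 12\Delta$ on the whole ball, provided $R+\nu\le 11\Delta/G$. On $B$, which is convex, $f$ is $\mu$-weakly convex. Since $m\ge\mu$, both $F_{x_0}$ and $F^\nu_{x_0}$ are $m$-strongly convex there. This also holds for $f_\nu$, because averaging preserves weak convexity on the shrunken ball.

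By \cref{lemma:properties_smoothed_func}, $|F^\nu_{x_0}-F_{x_0}|\le G\nu$ on $B$. Strong convexity then places the minimizer $x_\nu^\star$ well inside $B$, at distance at most $R/3$ from $x_0$. It also shows that the global infimum of $F_{x_0}$ over $\mathcal{D}$ is attained inside $B$. This needs a separate argument for points outside $B$, and we plan to handle it via the epigraph lifting: the segment from $x_0$ to any point with $F_{x_0}\le\Delta$ stays within $\mathcal{S}(12\Delta)$. Once this is in place, $F_{x_0}(\overline{x}_T)-\inf F_{x_0}\le F^\nu_{x_0}(\overline{x}_T)-F^\nu_{x_0}(x_\nu^\star)+\tfrac32 G\nu$, with part of the $G\nu$ slack absorbed by the quadratic term.

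\textbf{Phase 2: stochastic analysis with stopping.} We define the stopping time $\tau$ as the first $t$ with $\|x_t-x_\nu^\star\|>2R/3$, and work with the stopped process. Before $\tau$, $g_t$ is an unbiased estimate of $\nabla f_\nu(x_{t-1})$ by \cref{lemma:expectation_grad_est}, and the noise is sub-Gaussian with variance proxy $G^2d$ by \cref{lemma:sub-Gaussian_grad_est}. Set $D_t=\|x_t-x_\nu^\star\|^2$. Using $m$-strong convexity with $\eta_t=3/(m(t+t_0))$, we obtain the recursion
\[
D_t\le(1-\tfrac{3}{t+t_0})D_{t-1}+2\eta_t\langle \xi_t,x_{t-1}-x_\nu^\star\rangle+\eta_t^2\|g_t+2m(x_{t-1}-x_0)\|^2 ,
\]
where $\xi_t=\nabla f_\nu(x_{t-1})-g_t$. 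Unrolling with weights $(t+t_0)^3$ gives a martingale term and a squared-noise term. We bound both uniformly over all $t$ using the generalized Freedman inequality of \cite{harvey2019tight} together with sub-Gaussian tail bounds, and take a union bound with failure budget $\delta\cdot 6/(\pi^2 k^2)$ over dyadic blocks. This accounts for the $\ln(2\pi^2/(3\delta))$ term. The outcome should be $D_t\lesssim D_0\,(t_0/(t+t_0))^3+G^2d\ln(1/\delta)/(m^2(t+t_0))$.

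The offset $t_0=\Theta(d\ln(1/\delta))$ is chosen precisely so that the second term is at most $(R/3)^2\sim\Delta^2/G^2$, using $m\ge G^2/\Delta$. Hence on the good event $\tau>T$, so all iterates lie in $B\subseteq\mathcal{D}_\nu$. This is the step I expect to be the main obstacle. Freedman's inequality requires the increments to be bounded or to have a controlled conditional variance, yet the variance term $D_{t-1}$ is itself random. This forces a self-bounding argument in which $D_t$ appears on both sides, which we resolve by the peeling over the dyadic blocks and the $\ln\ln d$-type correction in $t_0$.

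\textbf{Function-value bound.} With all iterates confined to $B$, the standard weighted-average analysis for strongly convex nonsmooth SGD applies, in the style of \cite{lacoste2012simpler,harvey2019simple}, with weights $t+t_0-1$. The initial-distance term decays like $m t_0^2 D_0/T^2$, and with $t_0\sim d\ln(1/\delta)$ this gives the $(d\ln(1/\delta)/T)^2\,m\Delta^2/G^2$ contribution. The noise-variance term is of order $G^2 d\ln(1/\delta)/(mT)\le d\ln(1/\delta)\Delta/T$. The martingale fluctuation, controlled again by Freedman's inequality, contributes the $m\Delta^2/(G^2T)$ term. Adding the smoothing error from Phase 1 completes the bound.
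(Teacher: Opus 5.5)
Your architecture matches the paper's: a ball of radius $\Theta(\Delta/G)$ around $x_0$ inside $\mathcal{S}(12\Delta)$ on which $F_{x_0}$ and $F_{\nu,x_0}$ are $m$-strongly convex; $\|x_\nu^\star-x_0\|=O(\Delta/G)$; confinement by stopping (the paper uses the equivalent safeguard $x_t=x_{t-1}$ whenever $\widehat{x}_t\notin\mathcal{C}=x_\nu^\star+\frac{\sqrt{60}\Delta}{G}\mathbb{B}_d$); then weighted averaging with weights $t+t_0-1$. However, you have put the self-bounding difficulty in the wrong step.

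\textbf{Confinement needs no self-bounding.} On $\{t\le\tau\}\in\mathcal{F}_{t-1}$ you already have $D_{t-1}\le(2R/3)^2$. So the stopped martingale $\sum_k q_3(k-1;t_0)\langle x_{k-1}-x_\nu^\star,g_k-\nabla f_\nu(x_{k-1})\rangle$ has a \emph{deterministic} sub-Gaussian variance proxy of order $q_3(k-1;t_0)^2\Delta^2 d$. The paper feeds exactly this proxy into the time-uniform stitched boundary of \cref{theorem:stitched_boundary}; your dyadic union bound with weights $6/(\pi^2k^2)$ is what that boundary packages. The $\ln(1+7\ln(700d))$ in $t_0$ comes from the boundary's $\ln(1+\ln(V_t/v_0))\le\ln(1+7\ln t)$ term, via $\sup_{t\ge1}[100d\ln(1+7\ln t)-t]\le 100d\ln(1+7\ln(700d))$ (\cref{lemma:elementary_log_bound}). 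It does not come from any self-normalization. The squared-noise sum is handled by a Ville-type bound (\cref{lemma:time-uniform_poly_growth_v}). Only the crude conclusion $D_t\le 60\Delta^2/G^2$ is needed, not the $O(1/t)$ decay you aim for.

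\textbf{The genuine gap is the rate step.} Self-bounding is actually required in the function-value step, and there ``with all iterates confined to $B$'' does not suffice. With only $D_{t-1}=O(\Delta^2/G^2)$, the term $\frac{2}{T(T+2t_0-1)}\sum_t(t+t_0-1)\langle x_{t-1}-x_\nu^\star,\nabla f_\nu(x_{t-1})-g_t\rangle$ has variance proxy of order $T^3\Delta^2 d$. It is therefore only $O(\Delta\sqrt{d\ln(1/\delta)/T})$, which misses the claimed rate. The paper re-unrolls the distance recursion to show (\cref{lemma:var_process_bound_convergence}) that $\mathscr{V}_T\le\sum_t\mathfrak{a}_t\vartheta_t+\mathfrak{b}_T(\delta)$. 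Here $\mathfrak{a}_t=O(G\sqrt{d}\,T/m)$ and $\mathfrak{b}_T=O\big((t_0^3+T^2)\Delta^2/G^2+T^2G^2d\ln(1/\delta)/m^2\big)$. It then applies the generalized Freedman inequality (\cref{theorem:generalized_freedman}) once at fixed $T$. This gives $O(d\Delta\ln(1/\delta)/T)+O(d^2\Delta(\ln(1/\delta))^{3/2}/T^2)$, without the iterated-logarithm factors a time-uniform $O(1/t)$ bound on $D_t$ would typically bring into the average.

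Your bookkeeping of the final terms is also off. The martingale does not produce $m\Delta^2/(G^2T)$. That term comes from the proximal drift: the $16m^2\|x_0-x_\nu^\star\|^2$ piece of $\|g_t+2m(x_{t-1}-x_0)\|^2$. It contributes $\frac{48mT}{T(T+2t_0-1)}\|x_0-x_\nu^\star\|^2$ in \cref{lemma:convergence_raw_gap_bound}.

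Two minor points:
\begin{itemize}
\item Locating $\inf_{\mathcal{D}}F_{x_0}$ inside $B$ needs no lifting; the subroutine never uses \cref{assumption:convex_lifting}. Since $f>0$, you get $F_{x_0}(x)\ge m\|x-x_0\|^2>\Delta\ge F_{x_0}(x_0)$ whenever $\|x-x_0\|>\Delta/G$.
\item The exact constant $\tfrac32 G\nu$ needs the one-sided bound $F_{x_0}\le F_{\nu,x_0}+m\nu^2$ (\cref{lemma:bigF_smoothed_ver_comparison}), obtained from convexity of $F_{x_0}$ on $x+\nu\mathbb{B}_d$, together with $m\nu^2\le G\nu/2$. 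The two-sided $|f_\nu-f|\le G\nu$ only gives $2G\nu$.
\end{itemize}
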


Below we present the second and the third parts of our analysis.

\subsection{Convergence of the Proximal Point Iterations}

Let the initial point $x(0)\in\mathcal{D}$ be given, and let $\Delta\geq f(x(0))$ be arbitrary. By Part 1 of \cref{assumption:technical_objective}, the set $\setc*{(x,\lambda,\xi)\in\mathfrak{L}_{\mathrm{lft}}}{\lambda\leq\Delta}$ is compact, and thus its image under $\Phi$
\[
\setc*{(\gamma,\zeta)\in\mathfrak{F}_{\mathrm{cvx}}}{\gamma\leq\Delta}
\]
is also compact. We therefore may define $D_\Delta$ to be the finite diameter of the above set:
\[
D_{\Delta}\coloneqq \sup
\setc*{\left\|(\gamma_1,\zeta_1)-(\gamma_2,\zeta_2)\right\|}{(\gamma_i,\zeta_i)\in\mathfrak{F}_{\mathrm{cvx}}\text{ and }\gamma_i\leq\Delta\text{ for }i=1,2}.
\]
Furthermore, since $\Phi^{-1}$ is locally Lipschitz continuous,
there exists some $\chi_\Delta>0$ such that $\Phi^{-1}$ is $\chi_\Delta$-Lipschitz continuous on the compact set $\setc*{(\gamma,\zeta)\in\mathfrak{F}_{\mathrm{cvx}}}{\gamma\leq\Delta}$, i.e.,
\[
\|\Phi^{-1}(\gamma_1,\zeta_1)-\Phi^{-1}(\gamma_2,\zeta_2)\|
\leq\chi_\Delta\|(\gamma_1,\zeta_1)-(\gamma_2,\zeta_2)\|
\]
for all $(\gamma_1,\zeta_1),(\gamma_2,\zeta_2)\in \mathfrak{F}_{\mathrm{cvx}}$ with $\max\{\gamma_1,\gamma_2\}\leq\Delta$ (see, e.g., \cite[Theorem 2.1.6 \& Remark 2.1.7]{cobzas2019lipschitz}).

\begin{lemma}
\label{lemma:outer-loop_convergence}
Denote $F_\star(n) = \inf_{x\in\mathcal{D}}F_{x(n-1)}(x)$, and suppose for each $n\in[N]$, we have $f(x(n-1))\leq \Delta$ and
\[
F_{x(n-1)}\!\left(x(n)\right)
\leq F_\star(n) + \varepsilon_{\mathrm{in}}(n)
\]
for some $\Delta\geq f(x(0))$ and a sequence of positive reals $\left(\varepsilon_{\mathrm{in}}(n)\right)_{n=1}^N$. Then,
\begin{equation}
\label{eq:outerloop_func_value_raw}
f\!\left(x(N)\right)
-f^\ast
\leq
\frac{4m\chi_\Delta^2D_\Delta^2}{N+1}
+\frac{1}{N(N+1)}\sum_{n=1}^N n(n+1)\varepsilon_{\mathrm{in}}(n).
\end{equation}
\end{lemma}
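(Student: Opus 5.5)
The plan is to mimic the standard proximal-point analysis for convex problems, but to carry out the ``convex combination'' step in the lifted convex set $\mathfrak{F}_{\mathrm{cvx}}$ instead of in $x$-space. Fix $n\in[N]$. Let $x^\ast\in\mathcal{D}$ be a minimizer of $f$; it exists because $\mathcal{S}(\Delta)$ is compact and $f$ is continuous. Choose lifting variables $\xi_{n-1},\xi^\ast$ with
\[
(x(n-1),f(x(n-1)),\xi_{n-1}),\ (x^\ast,f^\ast,\xi^\ast)\in\mathfrak{L}_{\mathrm{lft}},
\]
which is possible by Part 1 of \cref{assumption:convex_lifting}. Map both points to $\mathfrak{F}_{\mathrm{cvx}}$ via $\Phi$, obtaining $(f(x(n-1)),\zeta_{n-1})$ and $(f^\ast,\zeta^\ast)$. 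For $\alpha\in[0,1]$, the convex combination $(1-\alpha)(f(x(n-1)),\zeta_{n-1})+\alpha(f^\ast,\zeta^\ast)$ lies in $\mathfrak{F}_{\mathrm{cvx}}$, and its first coordinate is $\lambda_\alpha=(1-\alpha)f(x(n-1))+\alpha f^\ast\le\Delta$.

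Pulling this point back through $\Phi^{-1}$ and using Part 2 of \cref{assumption:convex_lifting}, we get a point $(x_\alpha,\lambda_\alpha,\xi_\alpha)\in\mathfrak{L}_{\mathrm{lft}}$. Hence $x_\alpha\in\mathcal{D}$ and $f(x_\alpha)\le\lambda_\alpha$. The $\chi_\Delta$-Lipschitz continuity of $\Phi^{-1}$ on $\{\gamma\le\Delta\}$ gives
\[
\|x_\alpha-x(n-1)\|\le\chi_\Delta\,\alpha\,\|(f(x(n-1)),\zeta_{n-1})-(f^\ast,\zeta^\ast)\|\le\alpha\chi_\Delta D_\Delta .
\]
Plugging $x_\alpha$ into the prox subproblem yields the key one-step inequality
\[
f(x(n))\le F_{x(n-1)}(x(n))\le F_\star(n)+\varepsilon_{\mathrm{in}}(n)\le (1-\alpha)f(x(n-1))+\alpha f^\ast+m\alpha^2\chi_\Delta^2D_\Delta^2+\varepsilon_{\mathrm{in}}(n).
\]
Writing $e_n=f(x(n))-f^\ast$, this becomes $e_n\le(1-\alpha)e_{n-1}+m\chi_\Delta^2D_\Delta^2\alpha^2+\varepsilon_{\mathrm{in}}(n)$ for every $\alpha\in[0,1]$. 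Weak convexity is not needed here; $m$ enters only through the quadratic penalty.

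To close the argument, choose $\alpha_n=2/(n+1)$, so that $\alpha_1=1$ eliminates $e_0$. Multiplying the recursion by $n(n+1)$ gives $n(n+1)(1-\alpha_n)=(n-1)n$ and $n(n+1)\alpha_n^2=4n/(n+1)\le4$. Summing the telescoped inequality over $n=1,\dots,N$ yields
\[
N(N+1)e_N\le4Nm\chi_\Delta^2D_\Delta^2+\sum_{n=1}^N n(n+1)\varepsilon_{\mathrm{in}}(n).
\]
Dividing by $N(N+1)$ gives exactly \cref{eq:outerloop_func_value_raw}.

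The main obstacle is the legitimacy of the lifted interpolation step. We must check that both endpoints have first coordinate at most $\Delta$, which follows from the hypothesis $f(x(n-1))\le\Delta$ together with $f^\ast\le\Delta$. This ensures that the diameter bound $D_\Delta$ and the Lipschitz constant $\chi_\Delta$ both apply, since $\{\gamma\le\Delta\}\cap\mathfrak{F}_{\mathrm{cvx}}$ is convex and contains the whole segment. We must also confirm that $x_\alpha$ is feasible, i.e.\ $x_\alpha\in\mathcal{D}$ with $f(x_\alpha)\le\lambda_\alpha$, which is precisely what the epigraph characterization $\pi_{x,\lambda}(\mathfrak{L}_{\mathrm{lft}})=\operatorname{epi}(f)$ provides.
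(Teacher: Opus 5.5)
Your proposal is correct and follows the paper's proof essentially step for step. It uses the same interpolation in the lifted convex set $\mathfrak{F}_{\mathrm{cvx}}$ pulled back through $\Phi^{-1}$, the same one-step recursion, and the same choice $\alpha=2/(n+1)$; the only cosmetic difference is that you multiply by $n(n+1)$ and telescope, whereas the paper unrolls the recursion using $\prod_{k=n+1}^N\frac{k-1}{k+1}=\frac{n(n+1)}{N(N+1)}$, which is the same computation.
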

\begin{proof}
By Part 1 of \cref{assumption:technical_objective}, the optimal value $f^\ast$ is attained at some $x^\ast\in\mathcal{D}$. Then by \cref{assumption:convex_lifting}, there exists $\xi^\ast\in\mathbb{R}^{d_\xi}$ such that $(x^\ast,f^\ast,\xi^\ast)\in\mathfrak{L}_{\mathrm{lft}}$. Evidently $f^\ast\leq\Delta$.

Now let $n\in[N]$ and $\alpha\in[0,1]$ be arbitrary. \Cref{assumption:convex_lifting} then guarantees that we can find $\xi(n-1)\in\mathbb{R}^{d_\xi}$ such that $(x(n-1),f(x(n-1)),\xi(n-1))\in\mathfrak{L}_{\mathrm{lft}}$. We let
\begin{align*}
u(n-1)={} &(f(x(n-1)),\zeta(n-1))=\Phi(x(n-1),f(x(n-1)),\xi(n-1)), \\
u^\ast={} & (f^\ast,\zeta^\ast)= \Phi(x^\ast,f^\ast,\xi^\ast),
\end{align*}
which are in $\mathfrak{F}_{\mathrm{cvx}}$. Since $\mathfrak{F}_{\mathrm{cvx}}$ is convex, we have $(1-\alpha)u(n-1)+\alpha u^\ast\in\mathfrak{F}_{\mathrm{cvx}}$. Note that the first component of $(1-\alpha)u(n-1)+\alpha u^\ast$ equals
\[
(1-\alpha)f(x(n-1))+\alpha f^\ast,
\]
and by the assumption $f(x(n-1))\leq\Delta$ of the lemma, we have $(1-\alpha)f(x(n-1))+\alpha f^\ast\leq\Delta$. We then form
\[
\begin{aligned}
(x_\alpha(n-1),\gamma_\alpha(n-1),\xi_\alpha(n-1)) ={} &
\Phi^{-1}\!\left((1-\alpha)u(n-1)+\alpha u^\ast\right) \\
={} &
\Phi^{-1}\!\left((1-\alpha)(f(x(n-1)),\zeta(n-1))+\alpha(f^\ast,\zeta^\ast)\right),
\end{aligned}
\]
which is in $\mathfrak{L}_{\mathrm{lft}}$. By Part 1 of \cref{assumption:convex_lifting} and the definitions of $D_\Delta$ and $\chi_\Delta$, we have
\begin{align*}
& \left\|x_\alpha(n-1)-x(n-1)\right\| \\
\leq{} & \|(x_\alpha(n-1),\gamma_\alpha(n-1),\xi_\alpha(n-1))-(x(n-1),f(x(n-1)),\xi(n-1))\| \\
={} & \left\|\Phi^{-1}
((1-\alpha)u(n-1)+\alpha u^\ast)
-\Phi^{-1}(u(n-1))
\right\| \\
\leq{} & \chi_\Delta
\|(1-\alpha)u(n-1)+\alpha u^\ast-u(n-1)\| \\
={} & \chi_\Delta\|\alpha\cdot (u^\ast-u(n-1))\|
\leq \chi_\Delta D_\Delta \alpha,
\numberthis
\label{eq:outerloop_distance_x_lambda}
\end{align*}
Furthermore, since 
\[
(x_\alpha(n-1),\gamma_\alpha(n-1))=\pi_{x,\lambda}(x_\alpha(n-1),\gamma_\alpha(n-1),\xi_\alpha(n-1))\in\operatorname{epi}(f),
\]
we have
\begin{equation}
\label{eq:outerloop_func_value_convex_combination}
f(x_\alpha(n-1))\leq \gamma_\alpha(n-1)
=(1-\alpha)f(x(n-1))+\alpha f^\ast.
\end{equation}

Now we are ready to derive the bound~\cref{eq:outerloop_func_value_raw}. For each $n\in[N]$, we have
\begin{align*}
f(x(n))-f^\ast
\leq {} & f(x(n))+m\|x(n)-x(n-1)\|^2-f^\ast \\
={} & F_{x(n-1)}(x(n))-f^\ast \\
\leq{} & F_\star(n)+\varepsilon_{\mathrm{in}}(n)-f^\ast \\
\leq{} &
F_{x(n-1)}(x_\alpha(n-1)) + \varepsilon_{\mathrm{in}}(n)-f^\ast \\
={} &
f(x_\alpha(n-1))+m\|x_\alpha(n-1)-x(n-1)\|^2+ \varepsilon_{\mathrm{in}}(n)-f^\ast \\
\leq{} &
(1-\alpha)(f(x(n-1))-f^\ast)
+m\chi_\Delta^2 D_\Delta^2\alpha^2+\varepsilon_{\mathrm{in}}(n),
\numberthis \label{eq:outer-loop_temp1}
\end{align*}
where the third step follows from the lemma's assumption, and we used~\cref{eq:outerloop_distance_x_lambda} and~\cref{eq:outerloop_func_value_convex_combination} in the last step. We now set $\alpha=2/(n+1)$ and obtain
\[
f(x(n))-f^\ast
\leq \frac{n-1}{n+1}(f(x(n-1))-f^\ast)+\frac{4m\chi_\Delta^2 D_\Delta^2}{(n+1)^2}+\varepsilon_{\mathrm{in}}(n).
\]
By induction, we get
\[
f(x(N))-f^\ast
\leq \prod_{n=1}^N\frac{n-1}{n+1}(f(x(0))-f^\ast)
+\sum_{n=1}^N
\left(\frac{4m\chi_\Delta^2 D_\Delta^2}{(n+1)^2}+\varepsilon_{\mathrm{in}}(n)\right)\prod_{k=n+1}^N\frac{k-1}{k+1}.
\]
Then we note that
\[
\prod_{k=n+1}^N\frac{k-1}{k+1}
=\frac{n}{n+2}\cdot\frac{n+1}{n+3}\cdots\frac{N-2}{N}\cdot\frac{N-1}{N+1}
=\frac{n(n+1)}{N(N+1)}.
\]
Thus
\[
\begin{aligned}
f(x(N))-f^\ast
\leq{} & 
\sum_{n=1}^N
\left(\frac{4m\chi_\Delta^2 D_\Delta^2}{(n+1)^2}+\varepsilon_{\mathrm{in}}(n)\right)\frac{n(n+1)}{N(N+1)} \\
\leq{} &
4m\chi_\Delta^2 D_\Delta^2 \sum_{n=1}^N\frac{1}{N(N+1)}
+ \sum_{n=1}^N \varepsilon_{\mathrm{in}}(n)\frac{n(n+1)}{N(N+1)} \\
={} &
\frac{4m\chi_\Delta^2D_\Delta^2}{N+1}
+\frac{1}{N(N+1)}\sum_{n=1}^N n(n+1)\varepsilon_{\mathrm{in}}(n),
\end{aligned}
\]
which is just the bound~\cref{eq:outerloop_func_value_raw}.
\end{proof}

\begin{remark}
The proof of \cref{{lemma:outer-loop_convergence}} basically follows the argument in \cite{fatkhullin2025global} but generalized to the convex lifting setup considered in this work. In addition, we employ a different choice of the quantity $\alpha$ after obtaining~\cref{eq:outer-loop_temp1}: The work~\cite{fatkhullin2025global} used a constant $\alpha$ that is proportional to $\epsilon$, the predetermined bound on the final objective value gap, while we let $\alpha$ vary with the iteration index $n$. In this way, the polylogarithmic factors in the complexity bound will have lower degrees. We also allow $\varepsilon_{\mathrm{in}}$, the accuracy of the subroutine's output, to depend on $n$; an advantage of this modification is that the algorithmic parameters of the subroutine \texttt{ZOGD} may be chosen to only depend on $n$, enabling \cref{algorithm:main} to loop indefinitely without the need to predetermine a bound on the final objective value gap.
\end{remark}

\subsection{Proof of \Cref{theorem:main}}

We are now ready to prove \cref{theorem:main}. Recall that we let $\Delta_0\geq f(x(0))$, and $\mu=\mu_{24\Delta_0}, G=G_{24\Delta_0}$. The specific value of $t_0$ will be set to
\[
t_0(n)= 50d\ln(1+7\ln(700d))+25d\ln\frac{2n(n\!+\!1)\pi^2}{3\delta}.
\]
Now denote
\[
\widetilde{E}_n= \bigcap_{\tau=1}^{n}
\left\{F_{x(\tau-1)}(x(\tau))
\leq F_\star(\tau)+\frac{\Delta_0}{\tau(\tau+1)}\right\},
\qquad\forall n\in[N],
\]
where we recall that $F_\star(n) = \inf_{x\in\mathcal{D}}F_{x(n-1)}(x)$. We also let $\widetilde{E}_0$ denote the certain event.

We first show by induction that on $\tilde{E}_{n-1}$ we have $f(x(n-1))\leq (2-1/n)\Delta_0$ for all $n\in[N]$. The base case $f(x(0))\leq \Delta_0$ follows by our choice of $\Delta_0$. Now suppose the event $\widetilde{E}_n$ occurs and $f(x(n-1))\leq [2-1/n]\Delta_0$ for some $n\in[N-1]$. Then we have
\[
\begin{aligned}
f(x(n))\leq{} &
F_{x(n-1)}(x(n))
\leq F_\star(n)+\frac{\Delta_0}{n(n+1)} \\
\leq{} & F_{x(n-1)}(x(n-1)) + \frac{\Delta_0}{n(n+1)}
=f(x(n-1))+\frac{\Delta_0}{n(n+1)} \\
\leq{} & \left(2-\frac{1}{n}\right)\Delta_0+\frac{\Delta_0}{n(n+1)}
=\left(2-\frac{1}{n+1}\right)\Delta_0.
\end{aligned}
\]
By induction, we see that $f(x(n-1))\leq (2-1/n)\Delta_0$ on $\widetilde{E}_{n-1}$ for all $n\in[N]$.

Next we show that $\mathbb{P}\big(\widetilde{E}_n\big)\geq 1-n\delta/(n+1)$, which is also by induction. The base case is trivial. Now suppose $\mathbb{P}\big(\widetilde{E}_{n-1}\big)\geq 1-(n-1)\delta/n$, and the event $\widetilde{E}_{n-1}$ occurs. Since $f(x(n-1))\leq 2\Delta_0$ on $\widetilde{E}_{n-1}$, by our choice of $m,\nu$ and $t_0(n)$, we can apply \cref{theorem:subroutine_convergence} and get
\[
\begin{aligned}
& F_{x(n-1)}(x(n))-F_\star(n) \\
\leq{} &
C\!\left(\frac{2d\Delta_0}{T(n)}\ln\frac{n(n\!+\!1)}{\delta}
+\left[\frac{1}{T(n)}
+\left(\frac{d}{T(n)}\ln\frac{n(n\!+\!1)}{\delta}\right)^{\!2}\right]\frac{4m\Delta_0^2}{G^2}
\right)
+\frac{G^2}{4mn(n\!+\!1)}
\end{aligned}
\]
with probability at least $1-\frac{\delta}{n(n+1)}$ conditioned on $\widetilde{E}_{n-1}$. By our choice of $m$, we have $G^2\leq 2m\Delta_0$ and consequently
\[
\frac{G^2}{4mn(n+1)}\leq\frac{\Delta_0}{2n(n+1)}.
\]
Moreover, by choosing
\[
T(n)=\left\lceil
24Cn(n+1)d\frac{m\Delta_0^2}{G^2}\cdot
\ln\frac{n(n\!+\!1)}{\delta}
\right\rceil
=\Theta\!\left(n^2d\ln\frac{n}{\delta}\right),
\]
we have
\[
C\!\left(\frac{2d\Delta_0}{T(n)}\ln\frac{n(n\!+\!1)}{\delta}
+\left[\frac{1}{T(n)}
+\left(\frac{d}{T(n)}\ln\frac{n(n\!+\!1)}{\delta}\right)^{\!2}\right]\frac{4m\Delta_0^2}{G^2}
\right)
\leq \frac{\Delta_0}{2n(n+1)},
\]
where we also used $G^2\leq 2m\Delta_0$. Now we get
\[
\begin{aligned}
F_{x(n-1)}(x(n))
\leq{} & F_\star(n)
+\frac{\Delta_0}{n(n+1)}.
\end{aligned}
\]
with probability at least $1-\frac{\delta}{n(n+1)}$ conditioned on $\widetilde{E}_{n-1}$. Consequently,
\[
\mathbb{P}\!\left(\widetilde{E}_{n}\right)
=\mathbb{P}\Pcond*{\widetilde{E}_{n}}{\widetilde{E}_{n-1}}\cdot \mathbb{P}\!\left(\widetilde{E}_{n-1}\right)
\geq\left(1-\frac{(n-1)\delta}{n}\right)\left(1-\frac{\delta}{n(n+1)}\right)\geq
1-\frac{n\delta}{n+1}.
\]
We can now apply mathematical induction to get $\mathbb{P}\big(\widetilde{E}_{n}\big)\geq 1-\frac{n\delta}{n+1}$ for all $n\in[N]$, and particularly,
\[
\mathbb{P}\!\left(\widetilde{E}_{N}\right)\geq 1-\delta.
\]
Finally, we can apply \cref{lemma:outer-loop_convergence} and see that
\[
f(x(N))-f^\ast
\leq \frac{4m\chi_{2\Delta_0}^2 D_{2\Delta_0}^2}{N+1}
+\frac{1}{N(N+1)}\sum_{n=1}^N n(n+1)\cdot\frac{\Delta_0}{n(n+1)}
=O\!\left(\frac{1}{N}\right)
\]
on the event $\widetilde{E}_{N}$. Summarizing all the obtained results completes the derivation of the convergence rate.

To show the complexity bound, we only need to notice that, by $f(x(N))-f^\ast\leq O\!\left(1/N\right)$, it suffices to set $N=\Theta(1/\epsilon)$ to obtain $f(x(N))-f^\ast\leq\epsilon$. Since each inner-loop iteration contains two function value queries, the total number of function value queries is then
\[
O\!\left(\sum_{n=1}^N T(n)\right)
=O\!\left(\sum_{n=1}^N
n^2d\ln\frac{n}{\delta}
\right)
= O\!\left(N^3 d\ln\frac{N}{\delta}
\right)
=O\!\left(\frac{d}{\epsilon^3}\ln\frac{1}{\epsilon\delta}\right).
\]

\section{Analysis of the Subroutine}
\label{sec:analysis_subroutine}

In this section, we analyze the convergence of the subroutine $\mathtt{ZOGD}$. The analysis will be conducted within the subroutine, so that the notations will be consistent with those of the subroutine in \cref{algorithm:main} rather than the outer loop. We consider the following version of iterations for the subroutine $\mathtt{ZOGD}(x_0,m,\nu,T)$:
\begin{equation}
\label{eq:main_inner_iteration}
\begin{aligned}
g_t={} & \sfG(x_{t-1};z_{t,b},\nu), \\
\widehat{x}_t ={} & x_{t-1}-\eta_t(g_t+2m(x_{t-1}-x_0)), \\
x_{t} ={} & \begin{cases}
\widehat{x}_t, & \text{if } \widehat{x}_{t}\in \mathcal{C}, \\
x_{t-1}, & \text{otherwise}
\end{cases}
\end{aligned}
\end{equation}
for each $t=1,2,\ldots,T$. We recall that $z_{t}$ for $t=1,\ldots,T$ are i.i.d. random vectors, each of which follows the distribution $\mathcal{U}(\mathbb{S}_{d-1})$. Compared to the subroutine detailed in \cref{algorithm:main}, we add a safeguard step that checks whether the result of one gradient step $\widehat{x}_t$ lies within some safeguard set $\mathcal{C}$. If $\widehat{x}_t\in\mathcal{C}$ we move the optimization procedure forward; otherwise we pause and discard this gradient update. The introduction of this safeguard step is purely to facilitate the theoretical analysis, but guarantees that $x_t$ remains in some safe region so that we can establish crucial properties of $f_\nu$ and the gradient estimator for convergence analysis. We shall later see that by properly choosing $\mathcal{C}$ and other parameters of the algorithm, we will have $x_t=\widehat{x}_t\in\mathcal{C}$ for all $t\in[T]$ with high probability, allowing us to mark the optimization procedure as failed whenever $\widehat{x}_t\notin\mathcal{C}$ occurs. The form of $\mathcal{C}$ will be specified later. We remark that, in practical implementations, it is not necessary to know $\mathcal{C}$ or perform the safeguard step $x_t=x_{t-1}$ when $\widehat{x}_{t}\notin\mathcal{C}$.

As mentioned earlier, the iterations~\cref{eq:main_inner_iteration} can be understood as a variant of zeroth-order gradient descent applied to the objective $F_{x_0}(x)=f(x)+m\|x-x_0\|^2$.
For notational convenience, we define the function $F_{\nu,x_0}:\mathcal{D}_\nu\rightarrow\mathbb{R}$ by
\[
\begin{aligned}
F_{\nu,x_0}(x) \coloneq{} & f_\nu(x)+m\|x-x_0\|^2.
\end{aligned}
\]
$F_{\nu,x_0}$ can be regarded as a ``partially smoothed'' version of $F_{x_0}$. We also define the filtration $(\mathcal{F}_t)_{t=0}^{T}$ by
\[
\mathcal{F}_0=\sigma(\{\varnothing\}),\qquad
\mathcal{F}_t = \sigma(z_1,z_2,\ldots,z_{t}),\quad \forall t\in[T].
\]
It can be seen that each $x_t$ is $\mathcal{F}_t$-measurable.

\subsection{Auxiliary Lemmas and Concentration Inequalities}

Before starting the proof pipeline, we first present some useful results on sub-Gaussian random variables and relevant concentration inequalities.

\begin{lemma}[{\cite[Proposition 2.6.1]{vershynin2026high}}]
\label{lemma:sub-Gaussian_zero_mean_MGF}
Let $X$ be a random variable satisfying $\mathbb{E}\!\left[\exp\!\left(\frac{X^2}{\kappa^2}\right)\right]\leq 2$
for some $\kappa>0$ and $\mathbb{E}[X]=0$. Then
\[
\mathbb{E}[\exp(\lambda X)]\leq\exp\!\left(\frac{3}{2}\lambda^2\kappa^2\right)
\]
for all $\lambda\in\mathbb{R}$.
\end{lemma}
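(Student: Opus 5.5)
We would prove the bound by the standard Taylor expansion of the exponential combined with moment bounds for sub-Gaussian variables. The statement is symmetric in $\lambda\mapsto-\lambda$ (replace $X$ by $-X$), and by rescaling $X\mapsto X/\kappa$ we may assume $\kappa=1$. So the goal is to show $\mathbb{E}[e^{\lambda X}]\le e^{3\lambda^2/2}$ whenever $\mathbb{E}[e^{X^2}]\le 2$ and $\mathbb{E}X=0$.

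\textbf{Small $|\lambda|$.} We split according to the size of $|\lambda|$. For $|\lambda|\le 1$ (or some similar threshold), we expand
\[
\mathbb{E}[e^{\lambda X}]=1+\lambda\mathbb{E}[X]+\sum_{k\ge2}\frac{\lambda^k\mathbb{E}[X^k]}{k!}.
\]
The linear term vanishes because $\mathbb{E}[X]=0$. For the remainder we use the elementary inequality $e^{y}\le y+e^{y^2}$, valid for all real $y$. Applying it with $y=\lambda X$ gives
\[
\mathbb{E}[e^{\lambda X}]\le \mathbb{E}[e^{\lambda^2X^2}].
\]
For $\lambda^2\le1$, Jensen's inequality (concavity of $t\mapsto t^{\lambda^2}$) gives
\[
\mathbb{E}[e^{\lambda^2X^2}]\le \left(\mathbb{E}[e^{X^2}]\right)^{\lambda^2}\le 2^{\lambda^2}=e^{\lambda^2\ln 2}\le e^{3\lambda^2/2}.
\]

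\textbf{Large $|\lambda|$.} For $|\lambda|\ge1$, we use Young's inequality $\lambda X\le \frac{\lambda^2}{2}+\frac{X^2}{2}$. This gives
\[
\mathbb{E}[e^{\lambda X}]\le e^{\lambda^2/2}\,\mathbb{E}[e^{X^2/2}]\le e^{\lambda^2/2}\,2^{1/2},
\]
where the second step is Jensen again. Since $\lambda^2\ge1$, we have $2^{1/2}=e^{(\ln2)/2}\le e^{\lambda^2}$, so the total is at most $e^{3\lambda^2/2}$. This is where the constant $3/2$ comes from. Combining the two regimes and undoing the scaling yields the claim.

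\textbf{Main obstacle.} The only delicate point is verifying $e^{y}\le y+e^{y^2}$. For $|y|\ge 1$ (or large $y$) it is clear since $y^2\ge y$ and $e^{y^2}\ge e^{y}$ when $y\ge1$, while for negative $y$ the right side is at least $e^{y^2}+y$ and one compares series. For $|y|<1$ we would compare Taylor series termwise: $e^{y}-y=1+\sum_{k\ge2}y^k/k!$, which is bounded by $1+y^2+y^4/2+\dots=e^{y^2}$ after grouping the odd terms with the adjacent even ones. If this grouping proves fiddly, we would instead invoke the textbook fact directly, since the lemma is quoted from \cite[Proposition 2.6.1]{vershynin2026high}.
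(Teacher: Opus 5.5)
Your argument is correct and is the standard textbook proof of this fact: for $|\lambda|\le 1$ you use the pointwise inequality $e^{y}\le y+e^{y^2}$ together with Jensen, and for $|\lambda|\ge 1$ you use $\lambda X\le \frac{\lambda^2}{2}+\frac{X^2}{2}$ together with Jensen; the paper does not reprove this lemma but simply imports it from \cite{vershynin2026high}. The one loose spot is your justification of $e^{y}\le y+e^{y^2}$: for $y\le -1$ it follows in one line from $y+e^{y^2}\ge 1+y+y^2\ge 1> e^{y}$, and for $|y|\le 1$ from $\sum_{k\ge 2}|y|^k/k!\le (e-2)y^2\le e^{y^2}-1$.
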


The following Ville's inequality is a classical result in martingale concentration.

\begin{lemma}[Ville's inequality, {\cite[Lemma 1]{howard2020time}}]
\label{lemma:Ville_inequality}
Let $L_1,\ldots,L_t$ be a non-negative supermartingale. Then, for any $a>0$, we have
\[
\mathbb{P}\!\left(\exists k\in[t]\textrm{ s.t. }L_k\geq a\right)\leq\frac{\mathbb{E}[L_1]}{a}.
\]
\end{lemma}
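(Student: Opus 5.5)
We would prove this by the standard stopping-time argument, which only needs optional stopping for a \emph{bounded} stopping time, so no integrability or convergence issues arise. Let $(\mathcal{G}_k)_{k=1}^t$ be the filtration with respect to which $(L_k)$ is a supermartingale. Define
\[
\tau\coloneqq\min\setc*{k\in[t]}{L_k\geq a},
\]
with the convention $\tau=t$ if no such $k$ exists. Then $\tau$ is a stopping time with $1\leq\tau\leq t$. The event of interest is $A\coloneqq\{\exists k\in[t]\text{ s.t. }L_k\geq a\}$.

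\textbf{Step 1 (optional stopping).} We will show $\mathbb{E}[L_\tau]\leq\mathbb{E}[L_1]$. To keep the argument self-contained, we write
\[
L_\tau=L_1+\sum_{k=2}^{t}\mathbbm{1}_{\{\tau\geq k\}}(L_k-L_{k-1}).
\]
Since $\{\tau\geq k\}=\{\tau\leq k-1\}^c\in\mathcal{G}_{k-1}$, the supermartingale property gives
\[
\mathbb{E}\big[\mathbbm{1}_{\{\tau\geq k\}}(L_k-L_{k-1})\big]=\mathbb{E}\big[\mathbbm{1}_{\{\tau\geq k\}}\big(\mathbb{E}[L_k\mid\mathcal{G}_{k-1}]-L_{k-1}\big)\big]\leq 0.
\]
Each $L_k$ is integrable, so all terms are well defined. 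Summing over $k$ yields $\mathbb{E}[L_\tau]\leq\mathbb{E}[L_1]$.

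\textbf{Step 2 (Markov-type lower bound).} On $A$ we have $L_\tau\geq a$ by the definition of $\tau$. On $A^c$ we have $L_\tau=L_t\geq 0$ by nonnegativity. Hence $L_\tau\geq a\mathbbm{1}_A$, and taking expectations gives
\[
a\,\mathbb{P}(A)\leq\mathbb{E}[L_\tau]\leq\mathbb{E}[L_1].
\]
Dividing by $a>0$ gives the claim.

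There is no real obstacle here. The only point requiring care is the measurability of $\{\tau\geq k\}$ with respect to $\mathcal{G}_{k-1}$, which is what makes the telescoping sum in Step 1 have nonpositive expectation. Nonnegativity of $L$ is used only on $A^c$ in Step 2.
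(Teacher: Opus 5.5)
Your proof is correct and complete: the decomposition $L_\tau=L_1+\sum_{k=2}^t\mathbbm{1}_{\{\tau\geq k\}}(L_k-L_{k-1})$ with $\{\tau\geq k\}\in\mathcal{G}_{k-1}$ gives bounded optional stopping directly, and the bound $L_\tau\geq a\mathbbm{1}_A$, which uses nonnegativity only on $A^c$, finishes the argument. The paper gives no proof of this lemma and simply cites it as the classical Ville maximal inequality; your stopping-time argument is the standard proof, and the finite horizon lets you skip the extra limiting step over truncations $\tau\wedge n$ that the usual infinite-index statement needs.
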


The next lemma is a corollary of Ville's inequality, which provides a time-uniform high-probability bound on the sum of non-negative random variables satisfying certain conditional exponential-moment bounds. Its proof is given in \cref{appendix:proof_time-uniform_poly_growth_v} for completeness.

\begin{lemma}
\label{lemma:time-uniform_poly_growth_v}
Suppose $(h_t)_{t=1}^T$ is a sequence of non-negative random variables such that each $h_t$ is $\mathcal{F}_t$-measurable, and that
\[
\mathbb{E}\Econd*{\exp\!\left(\frac{h_t}{v_t}\right)}{\mathcal{F}_{t-1}}\leq 2.
\]
for some positive reals $v_1\leq v_2\leq\ldots\leq v_T$. Denote $V_t=\sum_{i=1}^t v_i$. Furthermore, suppose there exist positive constants $\alpha,\beta$ such that
\begin{equation}
\label{eq:lemma_time-uniform_poly_growth_v_condition}
\min_{t\in[T]}\frac{V_t}{v_t}\geq \alpha t+\beta.
\end{equation}
Then for $c>\ln 2$, we have
\[
\mathbb{P}\!\left(\exists t\in[T]\text{ s.t. }
\sum_{k=1}^t h_k> cV_t
\right)\leq\frac{\exp(-(\alpha+\beta)(c-\ln 2))}{1-\exp(-\alpha(c-\ln 2))}.
\]
\end{lemma}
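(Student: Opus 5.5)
We prove \cref{lemma:time-uniform_poly_growth_v} by applying Ville's inequality (\cref{lemma:Ville_inequality}) to a separate exponential supermartingale for each $t$, and then taking a union bound over $t$. The summability of the union bound comes from the growth condition~\cref{eq:lemma_time-uniform_poly_growth_v_condition}.

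\textbf{Step 1: the supermartingale.} Fix $t\in[T]$. For $k\le t$ set $L^{(t)}_k=\exp\!\big(\sum_{i=1}^k (h_i/v_t-\ln 2)\big)$. Since $v_i\le v_t$ and $h_i\ge0$, Jensen's inequality gives
\[
\mathbb{E}\Econd*{\exp(h_i/v_t)}{\mathcal{F}_{i-1}}\le \left(\mathbb{E}\Econd*{\exp(h_i/v_i)}{\mathcal{F}_{i-1}}\right)^{v_i/v_t}\le 2^{v_i/v_t}\le 2 .
\]
Hence $(L^{(t)}_k)_{k\le t}$ is a nonnegative supermartingale with $\mathbb{E}[L^{(t)}_1]\le1$.

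\textbf{Step 1, refined weighting.} This crude scaling loses too much. The event at time $t$ is $\sum_{k\le t}h_k>cV_t$, and the relevant exponent should be of order $(c-\ln 2)V_t/v_t$. Using the exponent $\sum_k h_k/v_t$ instead, the event gives $\sum_k h_k/v_t> cV_t/v_t$, and subtracting $t\ln 2$ leaves $cV_t/v_t-t\ln2$. This is only useful if $t\le V_t/v_t$, which holds because $V_t\ge$ ... wait, $v_i\le v_t$ gives $V_t\le t v_t$, the wrong direction.

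\textbf{Step 1, corrected.} The fix is to weight the $\ln 2$ penalty by $v_i/v_t$, exactly as Jensen produced $2^{v_i/v_t}$. Define
\[
L^{(t)}_k=\exp\!\Big(\sum_{i\le k}\big(h_i-v_i\ln2\big)/v_t\Big).
\]
By the Jensen bound above, $\mathbb{E}\Econd*{\exp((h_i-v_i\ln 2)/v_t)}{\mathcal{F}_{i-1}}\le 1$. So $L^{(t)}$ is a nonnegative supermartingale with $\mathbb{E}[L^{(t)}_1]\le1$.

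\textbf{Step 2: Ville's inequality for fixed $t$.} On the event $\sum_{k\le t}h_k>cV_t$ we have $L^{(t)}_t>\exp((c-\ln2)V_t/v_t)$. \Cref{lemma:Ville_inequality} then gives
\[
\mathbb{P}\Big(\sum_{k\le t}h_k>cV_t\Big)\le \exp\!\big(-(c-\ln 2)V_t/v_t\big)\le \exp\!\big(-(c-\ln2)(\alpha t+\beta)\big),
\]
where the last step uses~\cref{eq:lemma_time-uniform_poly_growth_v_condition}.

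\textbf{Step 3: union bound.} Summing over $t\in[T]$,
\[
\sum_{t\ge1} e^{-(c-\ln2)(\alpha t+\beta)}=\frac{e^{-(\alpha+\beta)(c-\ln2)}}{1-e^{-\alpha(c-\ln2)}},
\]
which is the claimed bound; the series converges because $c>\ln 2$.

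The main obstacle is Step 1: choosing a per-$t$ normalization in which the $\ln 2$ penalty scales with $V_t$ rather than with $t$. Normalizing by $v_t$ and using Jensen to get $\mathbb{E}[\exp(h_i/v_t)\mid\mathcal{F}_{i-1}]\le 2^{v_i/v_t}$ resolves it. This step relies on the monotonicity $v_i\le v_t$, which makes $v_i/v_t\le 1$ so that Jensen applies.
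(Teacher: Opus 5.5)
Your final argument (the corrected Step 1 together with Steps 2 and 3) is correct and matches the paper's proof. The paper also fixes $t$ and uses Jensen with $\lambda v_k\le 1$ at $\lambda=1/v_t$ to show that $\exp\bigl(\sum_{i\le k}(h_i-v_i\ln 2)/v_t\bigr)$ is a nonnegative supermartingale, then applies Ville's inequality to get $\exp(-(c-\ln 2)V_t/v_t)\le\exp(-(c-\ln 2)(\alpha t+\beta))$, and finishes with a union bound and a geometric series; in a final write-up, simply delete the two abandoned versions of Step 1.
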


The next theorem provides a time-uniform bound for analyzing the sum of conditionally zero-mean sub-Gaussian random variables. It will be used in finding an appropriate value for $t_0$ so that the failure probability will be bounded.

\begin{theorem}[{\cite[Theorem 1]{howard2021time}}]
\label{theorem:stitched_boundary}
Let $(\theta_t)_{t=1}^T$ be a sequence of random variables such that each $\theta_t$ is $\mathcal{F}_t$-measurable and
\[
\mathbb{E}\Econd*{\theta_t}{\mathcal{F}_{t-1}}=0\qquad\text{a.s.}
\]
Suppose for each $t\in[T]$, $v_{t-1}$ is a positive real number such that
\[
\mathbb{E}\Econd*{\exp(\lambda \theta_t)}{\mathcal{F}_{t-1}}
\leq \exp\!\left(\frac{\lambda^2}{2}v_{t-1}\right),\qquad\forall \lambda>0.
\]
Let $S_t=\sum_{i=1}^t \theta_i$ and $V_t=\sum_{i=1}^t v_{i-1}$. Then
\[
\mathbb{P}\!\left(\exists t\in[T]\text{ s.t. }S_t\geq u_\delta(V_t)\right)\leq\delta,
\]
where the function $u_\delta:[v_0,+\infty)\rightarrow[0,+\infty)$ is given by
\[
u_\delta(x) = \frac{e^{1/4}+e^{-1/4}}{\sqrt{2}}
\sqrt{x\left(2\ln\!\left(1+\ln\frac{x}{v_0}\right)+\ln\frac{\pi^2}{6\delta}\right)}.
\]
\end{theorem}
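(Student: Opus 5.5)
The plan is to prove this stitched time-uniform bound by the standard route of Howard et al. The first step builds a nonnegative exponential supermartingale for each fixed $\lambda>0$. The second step applies Ville's inequality (\cref{lemma:Ville_inequality}) on geometrically growing epochs of the intrinsic time $V_t$. The third step takes a union bound over epochs with failure probabilities summing to $\delta$.

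\textbf{Step 1.} Fix $\lambda>0$ and set $L_t=\exp(\lambda S_t-\frac{\lambda^2}{2}V_t)$. Because $v_{t-1}$ is deterministic and the conditional sub-Gaussian bound holds, $\mathbb{E}[L_t\mid\mathcal{F}_{t-1}]\leq L_{t-1}$ with $L_0=1$. So $(L_t)$ is a nonnegative supermartingale. Ville's inequality then gives
\[
\mathbb{P}\bigl(\exists t:\ S_t\geq \tfrac{\ln(1/\delta')}{\lambda}+\tfrac{\lambda}{2}V_t\bigr)\leq\delta'.
\]
This is a linear boundary in $V_t$, and it is tight near $V_t\approx 2\ln(1/\delta')/\lambda^2$.

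\textbf{Step 2.} Partition $[v_0,\infty)$ into epochs $I_k=[v_0\eta^k,v_0\eta^{k+1})$ for $k=0,1,2,\ldots$, with ratio $\eta=e^{1/2}$. On epoch $k$, use failure probability $\delta_k=\frac{6\delta}{\pi^2(k+1)^2}$, so that $\sum_k\delta_k=\delta$. Choose $\lambda_k$ to optimize the linear boundary at the geometric midpoint $x_k=v_0\eta^{k+1/2}$, namely $\lambda_k=\sqrt{2\ln(1/\delta_k)/x_k}$. For $x\in I_k$ the ratio $x/x_k$ lies in $[\eta^{-1/2},\eta^{1/2}]=[e^{-1/4},e^{1/4}]$. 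The linear boundary then equals $\sqrt{x_k\ln(1/\delta_k)/2}\,(1+x/x_k)$, which is at most
\[
\frac{e^{1/4}+e^{-1/4}}{\sqrt2}\sqrt{x\ln(1/\delta_k)}.
\]
This uses the elementary fact that $(1+r)/\sqrt{r}\leq e^{1/4}+e^{-1/4}$ for $r\in[e^{-1/2},e^{1/2}]$, since the function is maximized at the endpoints. Since $V_t\geq v_0$ always holds, every $t$ falls into some epoch.

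\textbf{Step 3.} It remains to compare $\ln(1/\delta_k)=2\ln(k+1)+\ln\frac{\pi^2}{6\delta}$ with the target term. For $x\in I_k$ we have $\ln(x/v_0)\geq k/2$, so $k+1\leq 2(1+\ln(x/v_0))$, roughly. This costs a constant, and I expect it to be the main obstacle. With ratio $e^{1/2}$ it does not directly give $\ln(1+\ln(x/v_0))$ inside. To match the stated constant exactly, I would instead use epochs with ratio $e$, i.e.\ $I_k=[v_0e^k,v_0e^{k+1})$, so that $k+1\leq 1+\ln(x/v_0)$. The optimization point would then be chosen more carefully, which is what the $e^{\pm1/4}$ factors reflect. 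Concretely, in that version $\lambda_k$ is tuned so that the boundary's worst-case ratio over the epoch is $(e^{1/4}+e^{-1/4})/\sqrt2$ times the square-root target.

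If the arithmetic does not close, I would simply cite \cite[Theorem 1]{howard2021time} with $\eta=e$, $s=2$ and $h(k)=(k+1)^2\pi^2/6$, since the statement is verbatim their stitched boundary. The union bound over epochs yields probability at most $\delta$.
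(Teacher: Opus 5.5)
Your proof is correct and is the stitching argument behind \cite[Theorem 1]{howard2021time}, which the paper does not reprove but simply cites with $\eta=e$, $s=2$, $c=0$ and $m=v_0$; Step 1 (the exponential supermartingale plus the paper's Ville inequality), the union bound over epochs with $\delta_k=6\delta/(\pi^2(k+1)^2)$, and the observation $V_t\geq v_0$ are all sound. The one thing to clean up is the detour through ratio $e^{1/2}$, where $r=x/x_k$ ranges over $[e^{-1/4},e^{1/4}]$ rather than the interval your elementary fact is stated for, and where Step 3 does not reproduce the stated constant. Instead, take $I_k=[v_0e^k,v_0e^{k+1})$ from the start with the same geometric-midpoint choice $x_k=v_0e^{k+1/2}$ and $\lambda_k=\sqrt{2\ln(1/\delta_k)/x_k}$; no more careful tuning is needed. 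Then $r\in[e^{-1/2},e^{1/2})$ gives $\frac{\ln(1/\delta_k)}{\lambda_k}+\frac{\lambda_k}{2}x=\frac{1+r}{\sqrt{2r}}\sqrt{x\ln(1/\delta_k)}\leq\frac{e^{1/4}+e^{-1/4}}{\sqrt2}\sqrt{x\ln(1/\delta_k)}$, and $k\leq\ln(x/v_0)$ gives $\ln(1/\delta_k)\leq 2\ln(1+\ln(x/v_0))+\ln\frac{\pi^2}{6\delta}$. So each linear boundary lies below $u_\delta$ on its epoch with exactly the stated constants, assuming without loss of generality that $\delta<1$ so that all $\delta_k<1$.
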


Finally, the following generalized Freedman inequality provides a powerful tool for establishing high-probability convergence guarantees of stochastic subgradient descent for strongly-convex optimization.

\begin{theorem}[Generalized Freedman inequality, {\cite{harvey2019tight}}]
\label{theorem:generalized_freedman}
Let $(\theta_t)_{t=1}^T$ be a sequence of random variables such that each $\theta_t$ is $\mathcal{F}_t$-measurable and
\[
\mathbb{E}\Econd*{\theta_t}{\mathcal{F}_{t-1}}=0\qquad\text{a.s.}
\]
Suppose for each $t\in[T]$, $v_{t-1}$ is a non-negative and $\mathcal{F}_{t-1}$-measurable random variable satisfying
\[
\mathbb{E}\Econd*{\exp(\lambda \theta_t)}{\mathcal{F}_{t-1}}
\leq \exp\!\left(\frac{\lambda^2}{2}v_{t-1}\right),\qquad\forall \lambda>0.
\]
Let $S_t=\sum_{i=1}^t \theta_i$ and $V_t=\sum_{i=1}^t v_{i-1}$. Let $\alpha_1,\ldots,\alpha_T,\beta$ be non-negative reals that are not simultaneously zero, and denote $\alpha=\max_{1\leq t\leq T}\alpha_t$.
Then we have
\[
\mathbb{P}\!\left(\exists t\in[T]\text{ s.t. } S_t\geq x\text{ and }V_t\leq \sum_{i=1}^t \alpha_i \theta_{i}+\beta\right)
\leq \exp\!\left(-\frac{x^2}{4\alpha x+8\beta}\right)
\]
for all $x>0$.
\end{theorem}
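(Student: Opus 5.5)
The plan is the standard exponential-supermartingale argument combined with Ville's inequality (\cref{lemma:Ville_inequality}). The one twist is that the random term $\sum_{i\le t}\alpha_i\theta_i$ in the event has to be absorbed into the exponent itself. Fix two parameters $\lambda>0$ and $c>0$, to be chosen later, and define
\[
M_t=\exp\!\Big(\sum_{i=1}^t(\lambda+c\alpha_i)\theta_i-cV_t\Big),\qquad M_0=1 .
\]

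\textbf{Step 1 (supermartingale property).} Since $v_{t-1}$ is $\mathcal{F}_{t-1}$-measurable and $\lambda+c\alpha_t>0$, the assumed conditional MGF bound gives
\[
\mathbb{E}\Econd*{M_t}{\mathcal{F}_{t-1}}\le M_{t-1}\exp\!\Big(\big(\tfrac{(\lambda+c\alpha_t)^2}{2}-c\big)v_{t-1}\Big).
\]
So $(M_t)$ is a nonnegative supermartingale with $\mathbb{E}[M_1]\le 1$ as soon as $(\lambda+c\alpha)^2\le 2c$. Choose $c=2\lambda^2$. The condition then reads $(1+2\lambda\alpha)^2\le 4$, i.e.\ $\lambda\le 1/(2\alpha)$, which is vacuous if $\alpha=0$.

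\textbf{Step 2 (the event forces $M_t$ to be large).} Suppose that for some $t$ we have $S_t\ge x$ and $V_t\le\sum_{i\le t}\alpha_i\theta_i+\beta$. Then
\[
\log M_t=\lambda S_t+c\Big(\sum_{i\le t}\alpha_i\theta_i-V_t\Big)\ge \lambda x-c\beta=\lambda x-2\lambda^2\beta .
\]
Ville's inequality therefore bounds the probability of the event by $\exp(-(\lambda x-2\lambda^2\beta))$, for every $\lambda\in(0,1/(2\alpha)]$.

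\textbf{Step 3 (optimization over $\lambda$).} Take $\lambda=\min\{x/(4\beta),\,1/(2\alpha)\}$, with the obvious interpretation when $\beta$ or $\alpha$ is zero. There are two cases.
\begin{itemize}
\item If $x/(4\beta)\le 1/(2\alpha)$, then $\lambda=x/(4\beta)$ and the exponent is $x^2/(8\beta)$.
\item Otherwise $\lambda=1/(2\alpha)$ and $\beta<x/(2\alpha)$. The exponent is $x/(2\alpha)-\beta/(2\alpha^2)\ge x/(4\alpha)$.
\end{itemize}
In both cases the exponent is at least $\min\{x^2/(8\beta),\,x/(4\alpha)\}$. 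This is in turn at least $x^2/(4\alpha x+8\beta)$, because $\min\{a,b\}\ge(1/a+1/b)^{-1}$; applied to $x^2/(8\beta)$ and $x^2/(4\alpha x)$ this yields exactly the claimed bound.

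The main obstacle is Step 1. The event contains the random, sign-indefinite quantity $\sum_{i\le t}\alpha_i\theta_i$, so it cannot be bounded by a deterministic multiple of $S_t$. The resolution is to fold it into the exponential process with coefficients $\lambda+c\alpha_i$ and to tune $c\propto\lambda^2$ so that the variance penalty $-cV_t$ still dominates. Only positive multipliers $\lambda+c\alpha_i$ ever appear, which is why it suffices that the MGF hypothesis holds for $\lambda>0$. Everything after Step 1 is a routine application of Ville's inequality and a one-variable optimization.
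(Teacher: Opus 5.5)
Your argument is correct and is essentially the standard proof of this result in \cite{harvey2019tight}; the paper itself quotes the theorem without proof. The shared steps are: absorb $\sum_{i\le t}\alpha_i\theta_i$ into an exponential supermartingale with positive multipliers $\lambda+c\alpha_i$ and penalty $cV_t$, take $c$ of order $\lambda^2$ with $\lambda\le 1/(2\alpha)$, apply Ville's inequality, and tune $\lambda$. One small slip in Step 3: the second case $x/(4\beta)>1/(2\alpha)$ means $\beta<\alpha x/2$, not $\beta<x/(2\alpha)$, and it is this corrected condition that gives $\beta/(2\alpha^2)<x/(4\alpha)$ and hence the exponent bound $x/(4\alpha)$.
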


\subsection{Convex Neighborhoods of $x_0$}

As a starting point of our analysis, we first investigate certain convex neighborhoods of $x_0$. These convex neighborhoods will be fundamental for setting the domain over which $F_{x_0}$ and $F_{\nu,x_0}$ become convex, and for establishing critical analytical properties of $F_{x_0}$ and $F_{\nu,x_0}$.

Let $\Delta\geq f(x_0)$, and for notational simplicity we denote $G=G_{12\Delta}$ and $\mu=\mu_{12\Delta}$ in this section, so that $f$ is $G$-Lipschitz continuous on $\mathcal{S}(12\Delta)$ and is $\mu$-weakly convex on any convex subset of $\mathcal{S}(12\Delta)$. We set the algorithm parameter $m$ in~\cref{eq:main_inner_iteration} to be
\begin{equation}
\label{eq:parameter_choice_m}
m = \max\left\{\mu,
\frac{G^2}{\Delta}
\right\}.
\end{equation}
For each $1\leq\alpha\leq 12$, we denote
\[
\mathcal{S}_{x_0}(\alpha\Delta)\coloneq
\setc*{x\in\mathcal{D}}{f(x)+m\|x-x_0\|^2\leq \alpha\Delta}.
\]
Since $m\|x-x_0\|^2\geq 0$, we obviously have $\mathcal{S}_{x_0}(\alpha\Delta)\subseteq \mathcal{S}(\alpha\Delta)$. 

\begin{lemma}
\label{lemma:convex_subdomain_prox}
Let $\alpha\in(1,12]$ be arbitrary. The following statements hold:
\begin{enumerate}
\item $x_0+\frac{(\alpha-1)\Delta}{G}\mathbb{B}_d\subseteq\mathcal{S}(\alpha\Delta)$.
\item The function $F_{x_0}$ is $m$-strongly convex on $x_0+\frac{(\alpha-1)\Delta}{G}\mathbb{B}_d$.
\item The set $\mathcal{S}_{x_0}(\alpha\Delta)$ is a convex subset of $x_0+\frac{\gamma(\alpha)\Delta}{G}\mathbb{B}_d$, where
$\gamma(\alpha) = (1+\sqrt{1+4\alpha})/2$.
\end{enumerate}
\end{lemma}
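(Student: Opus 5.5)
For Part 1, I will run a continuation (first-exit) argument along rays from $x_0$. Write $r=(\alpha-1)\Delta/G$, fix a direction $u\in\mathbb{S}_{d-1}$, and let
\[
s^\ast=\sup\setc*{s\in[0,r]}{x_0+\tau u\in\mathcal{S}(\alpha\Delta)\ \forall\tau\in[0,s]}.
\]
Since $f(x_0)\le\Delta<\alpha\Delta$ and $\mathcal{D}$ is open with $f$ continuous, $s^\ast>0$. The segment $\{x_0+\tau u:\tau\in[0,s^\ast)\}$ lies in $\mathcal{S}(\alpha\Delta)\subseteq\mathcal{S}(12\Delta)$. On this segment $f$ is $G$-Lipschitz, so $f(x_0+\tau u)\le\Delta+G\tau$. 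By compactness of $\mathcal{S}(\alpha\Delta)$ (Part 1 of \cref{assumption:technical_objective}), the endpoint $x_0+s^\ast u$ also belongs to $\mathcal{S}(\alpha\Delta)$, and the same bound gives $f(x_0+s^\ast u)\le\Delta+Gs^\ast$. Now suppose $s^\ast<r$. Then $f(x_0+s^\ast u)<\alpha\Delta$ strictly. Because $\mathcal{D}$ is open and $f$ is continuous, the segment extends a little beyond $s^\ast$ while staying in $\mathcal{S}(\alpha\Delta)$, contradicting the definition of $s^\ast$. Hence $s^\ast=r$, and the whole ball is contained in $\mathcal{S}(\alpha\Delta)$. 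The step needing the most care is this topological closure: it uses compactness of the sublevel set together with the openness of $\mathcal{D}$.

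Part 2 follows once Part 1 is in hand. The ball $x_0+r\mathbb{B}_d$ is a convex subset of $\mathcal{S}(12\Delta)$, so by Part 3 of \cref{assumption:technical_objective} the function $f+\frac{\mu}{2}\|\cdot\|^2$ is convex on it. We can write
\[
F_{x_0}(x)=\Bigl(f(x)+\tfrac{\mu}{2}\|x\|^2\Bigr)+\Bigl(m\|x-x_0\|^2-\tfrac{\mu}{2}\|x\|^2\Bigr).
\]
The second bracket is a quadratic with Hessian $(2m-\mu)I\succeq mI$, since $m\ge\mu$. So $F_{x_0}$ is $m$-strongly convex on the ball (in the modulus convention $F-\frac m2\|\cdot\|^2$ convex).

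For Part 3, I will first show containment in the larger ball $x_0+\frac{\gamma(\alpha)\Delta}{G}\mathbb{B}_d$. The natural route is again along rays, since $f$ need not be Lipschitz far from $x_0$. Take $x\in\mathcal{S}_{x_0}(\alpha\Delta)$ and write $x=x_0+s u$. Every $x\in\mathcal{S}_{x_0}(\alpha\Delta)$ satisfies $f(x)\le\alpha\Delta$, so a point with $\|x-x_0\|\le r$ is already fine, since $r\le\gamma(\alpha)\Delta/G$. A cruder bound is also available: $m\|x-x_0\|^2\le\alpha\Delta$ and $m\ge G^2/\Delta$ give $\|x-x_0\|\le\sqrt{\alpha}\,\Delta/G$, and $\sqrt\alpha\le\gamma(\alpha)$ because $\gamma(\alpha)^2=\gamma(\alpha)+\alpha$. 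So containment in the ball is immediate.

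The main obstacle in Part 3 is convexity of $\mathcal{S}_{x_0}(\alpha\Delta)$, because the ball of radius $\gamma(\alpha)\Delta/G$ may exceed $r$, and strong convexity is only known on $x_0+r\mathbb{B}_d$. To handle this, I will show that $\mathcal{S}_{x_0}(\alpha\Delta)$ actually lies inside a ball $x_0+r'\mathbb{B}_d$ on which $F_{x_0}$ is convex. Since $\alpha\le12$, I can apply Part 1 and Part 2 with level $12\Delta$, giving $F_{x_0}$ convex on $x_0+\frac{11\Delta}{G}\mathbb{B}_d$. Since $\gamma(\alpha)\le\gamma(12)=(1+7)/2=4\le 11$, this ball contains $x_0+\frac{\gamma(\alpha)\Delta}{G}\mathbb{B}_d\supseteq\mathcal{S}_{x_0}(\alpha\Delta)$. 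Then $\mathcal{S}_{x_0}(\alpha\Delta)$ equals the sublevel set of a convex function restricted to a convex ball, and is therefore convex. For this to work, Parts 1 and 2 must hold at $\alpha=12$, which they do because their constants are $G_{12\Delta}$ and $\mu_{12\Delta}$.

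The quadratic $\gamma$ could be recovered sharply via $G s\le(\alpha-1)\Delta$-type estimates along rays, namely $ms^2\le\alpha\Delta-f(x)$ with $f(x)\ge\Delta-Gs$ on the connected part. Combined with $m\ge G^2/\Delta$, this gives $s^2G^2/\Delta-Gs\le(\alpha-1)\Delta$, whose root is $\le\gamma(\alpha)\Delta/G$ after a slight adjustment. I would present this as an alternative but rely on the simpler $\sqrt\alpha$ bound.
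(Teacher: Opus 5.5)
Your proof is correct. Parts 1 and 2 match the paper's argument, and so does the convexity half of Part 3: you invoke Parts 1--2 at $\alpha=12$, so that $F_{x_0}$ is convex on $x_0+\frac{11\Delta}{G}\mathbb{B}_d\subseteq\mathcal{D}$, and then read $\mathcal{S}_{x_0}(\alpha\Delta)$ as a sublevel set of $F_{x_0}$ on that ball.

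Where you differ is the radius bound in Part 3. The paper uses $G$-Lipschitz continuity on $\mathcal{S}(12\Delta)$ together with $f(x_0)\ge 0$ to get $f(x)\ge f(x_0)-G\|x-x_0\|\ge -G\|x-x_0\|$. It then solves $\frac{G^2}{\Delta}s^2-Gs-\alpha\Delta\le 0$, which is exactly where $\gamma(\alpha)$ comes from. You drop $f(x)$ entirely and obtain $m\|x-x_0\|^2\le\alpha\Delta$, hence the smaller radius $\sqrt{\alpha}\,\Delta/G\le\gamma(\alpha)\Delta/G$. This is shorter and sharper, but it silently uses $f(x)\ge 0$ at every $x$. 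You should state that this comes from the standing normalization $\inf_{\mathcal{D}}f>0$; the paper also relies on nonnegativity, but only at $x_0$.

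Three side remarks should be removed or corrected, though none of them is load-bearing.
\begin{enumerate}
\item The claim $r\le\gamma(\alpha)\Delta/G$ is false for $\alpha>2+\sqrt{2}$. At $\alpha=12$ you have $r=11\Delta/G$ but $\gamma(12)\Delta/G=4\Delta/G$. It is harmless only because your $\sqrt{\alpha}$ bound already covers every point.
\item The worry that ``$f$ need not be Lipschitz far from $x_0$'' misreads the assumption. $G$-Lipschitz continuity holds for every pair of points in $\mathcal{S}(12\Delta)$, with no segment condition, and both $x$ and $x_0$ lie there. So no ray argument is needed in Part 3.
\item In your final ``alternative,'' the bound $f(x)\ge\Delta-Gs$ is unjustified, since only $f(x_0)\le\Delta$ is known. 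The correct inequality $f(x)\ge f(x_0)-Gs\ge -Gs$ gives $\frac{G^2}{\Delta}s^2-Gs\le\alpha\Delta$, whose positive root is exactly $\gamma(\alpha)\Delta/G$. That is precisely the paper's proof, and it needs no ``slight adjustment.''
\end{enumerate}
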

\begin{proof}
We first show that
\begin{equation}
\label{eq:lemma_convex_subdomain_prox_step1}
x_0+\frac{(\alpha-1)\Delta}{G}\mathbb{B}_d\subseteq\mathcal{S}(\alpha\Delta).
\end{equation}
Suppose on the contrary that $\|x-x_0\|\leq\frac{(\alpha-1)\Delta}{G}$ and $x\notin\mathcal{S}(\alpha\Delta)$ for some $x\in\mathbb{R}^d$. Let
\[
R=\sup\setc*{r}{x_0+s(x-x_0)\in\mathcal{S}(\alpha\Delta)\text{ for all }s\in[0,r]}.
\]
Note that since $f(x_0)\leq\Delta<\alpha\Delta$ and $f$ is continuous, $R$ must be strictly positive, and since $x\notin\mathcal{S}(\alpha\Delta)$, we have $R<1$. Moreover, $x_0+s(x-x_0)\in\mathcal{S}(\alpha\Delta)\subseteq\mathcal{S}(12\Delta)$ for all $s\in[0,R]$. Therefore
\[
f(x_0+R(x-x_0))\leq f(x_0)+G R\|x-x_0\|
< \Delta+G\cdot \frac{(\alpha-1)\Delta}{G}=\alpha\Delta.
\]
However, since $f$ is continuous on the open set $\mathcal{D}$ and $\mathcal{S}(\alpha\Delta)$ is compact, we must have $f(x_0+R(x-x_0))=\alpha\Delta$, leading to a contradiction. Thus~\cref{eq:lemma_convex_subdomain_prox_step1} holds. 

Now that $x_0+\frac{(\alpha-1)\Delta}{G}\mathbb{B}_d$ is a convex subset of the sublevel set $\mathcal{S}(\alpha\Delta)$, we can conclude from Part 3 of \cref{assumption:technical_objective} that the function $F_{x_0}$ is $2(m-\mu/2)$-strongly convex on $x_0+\frac{(\alpha-1)\Delta}{G}\mathbb{B}_d$. By the definition of $m$, we have $2(m-\mu/2)\geq m$, and so $F_{x_0}$ is $m$-strongly convex on $x_0+\frac{(\alpha-1)\Delta}{G}\mathbb{B}_d$.

Next we prove the third statement. Let $x\in\mathcal{S}_{x_0}(\alpha\Delta)$ be arbitrary. We have
\[
\begin{aligned}
\alpha\Delta
\geq{} & f(x)+m\|x-x_0\|^2 \\
\geq{} & f(x_0)-G\|x-x_0\|+m\|x-x_0\|^2 \\
\geq{} & -G\|x-x_0\|+\frac{G^2}{\Delta}\|x-x_0\|^2,
\end{aligned}
\]
where the second step follows from the $G$-Lipschitz continuity of $f$ on $\mathcal{S}_{x_0}(\alpha\Delta)\subseteq\mathcal{S}(12\Delta)$. We rewrite the above inequality as
\[
\frac{G^2}{\Delta}\|x-x_0\|^2-G\|x-x_0\|-\alpha\Delta\leq 0,
\]
which implies that
\[
\begin{aligned}
\|x-x_0\|\leq{} & \frac{G+\sqrt{G^2+4\alpha G^2}}{2G^2/\Delta}
=\gamma(\alpha)\frac{\Delta}{G},
\end{aligned}
\]
where $\gamma(\alpha)=(1+\sqrt{1+4\alpha})/2$. We can then conclude that $\mathcal{S}_{x_0}(\alpha\Delta)\subseteq x_0+\frac{\gamma(\alpha)\Delta}{G}\mathbb{B}_d$. It is not hard to see that $\gamma(\alpha)$ is increasing over $\alpha\in[1,12]$ with its maximum being $\gamma(12)<11$. Therefore, $S_{x_0}(\alpha\Delta)$ is a sublevel set of the function $F_{x_0}$ on $x_0+\frac{11\Delta}{G}\mathbb{B}_d$, and thus is a convex set.
\end{proof}

After we have found certain convex neighborhoods of $x_0$ that are guaranteed to be subsets of the sublevel set $\mathcal{S}(12\Delta)$, we can now establish some fundamental analytical properties of $F_{\nu,x_0}$.

\begin{lemma}
\label{lemma:bigF_smoothed_ver_comparison}
\begin{enumerate}
\item Let $\nu>0$ and $x\in\mathcal{D}_\nu$ satisfy $x+\nu\mathbb{B}_d\subseteq\mathcal{S}(12\Delta)$. Then
\[
F_{x_0}(x)\leq F_{\nu,x_0}(x) + m\nu^2.
\]
\item Suppose $\nu\in(0,11\Delta/G)$. Then $F_{\nu,x_0}$ is an $m$-strongly convex function on $x_0+\left(\frac{11\Delta}{G}-\nu\right)\mathbb{B}_d$.
\end{enumerate}
\end{lemma}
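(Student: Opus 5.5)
For Part 1, I would use the fact that $F_{x_0}$ is $m$-strongly convex on convex neighborhoods of $x_0$. Since that neighborhood need not contain $x+\nu\mathbb{B}_d$, I would instead work directly from the definition of $f_\nu$ and exploit the quadratic term.

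Write
\[
F_{\nu,x_0}(x)=\mathbb{E}_{y\sim\mathcal{U}(\mathbb{B}_d)}\big[f(x+\nu y)\big]+m\|x-x_0\|^2 .
\]
The task is to compare $f(x)$ with $\mathbb{E}[f(x+\nu y)]$. The ball $x+\nu\mathbb{B}_d$ is a convex subset of $\mathcal{S}(12\Delta)$, so by Part 3 of \cref{assumption:technical_objective} the function $f+\frac{\mu}{2}\|\cdot\|^2$ is convex on it. Apply Jensen's inequality with $\mathbb{E}[y]=0$:
\[
f(x)+\frac{\mu}{2}\|x\|^2\leq \mathbb{E}\Big[f(x+\nu y)+\frac{\mu}{2}\|x+\nu y\|^2\Big]
= f_\nu(x)+\frac{\mu}{2}\|x\|^2+\frac{\mu\nu^2}{2}\mathbb{E}\|y\|^2 .
\]
Hence $f(x)\leq f_\nu(x)+\frac{\mu\nu^2}{2}$. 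Using $\mu\leq m$ from \cref{eq:parameter_choice_m} and $\mathbb{E}\|y\|^2\leq 1$, this gives $f(x)\leq f_\nu(x)+m\nu^2$. Adding $m\|x-x_0\|^2$ to both sides yields Part 1. The only thing to check is that Jensen applies, which holds because the ball is convex and lies inside the weak-convexity region.

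For Part 2, let $U=x_0+(\frac{11\Delta}{G}-\nu)\mathbb{B}_d$. For every $x\in U$ the ball $x+\nu\mathbb{B}_d$ lies in $x_0+\frac{11\Delta}{G}\mathbb{B}_d$. By Part 1 of \cref{lemma:convex_subdomain_prox} with $\alpha=12$, that larger ball is contained in $\mathcal{S}(12\Delta)\subseteq\mathcal{D}$. So $U\subseteq\mathcal{D}_\nu$, and $f_\nu$ is well defined on $U$.

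On $x_0+\frac{11\Delta}{G}\mathbb{B}_d$, the function $h(w)=f(w)+\frac{\mu}{2}\|w\|^2$ is convex. For fixed $y$, the map $x\mapsto h(x+\nu y)$ is convex on $U$, so its average over $y$ is also convex on $U$. That average equals
\[
f_\nu(x)+\frac{\mu}{2}\|x\|^2+\mu\nu\langle x,\mathbb{E}y\rangle+\text{const}=f_\nu(x)+\frac{\mu}{2}\|x\|^2+\text{const}.
\]
Therefore $f_\nu$ is $\mu$-weakly convex on $U$. It follows that $F_{\nu,x_0}=f_\nu+m\|\cdot-x_0\|^2$ is $(2m-\mu)$-strongly convex there. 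Since $m\geq\mu$, we have $2m-\mu\geq m$, which gives $m$-strong convexity.

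The step that needs the most care is the domain bookkeeping. One must confirm that every shifted ball used stays inside the convex set $x_0+\frac{11\Delta}{G}\mathbb{B}_d\subseteq\mathcal{S}(12\Delta)$, so that a single convex set carries the weak convexity. The condition $\nu<11\Delta/G$ guarantees that $U$ is nonempty.
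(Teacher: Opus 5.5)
Your proof is correct and follows essentially the same route as the paper. For Part 1 you use weak convexity of $f$ on the ball $x+\nu\mathbb{B}_d$ together with the zero mean of $y$. The paper phrases this as convexity of $F_{x_0}$ and averages over the symmetric pair $x\pm\nu y$; your Jensen step even gives the sharper constant $\mu\nu^2/2$. For Part 2 you average translates of a convex function over $x_0+\frac{11\Delta}{G}\mathbb{B}_d$: you use $f+\frac{\mu}{2}\|\cdot\|^2$ where the paper uses $f+\frac{m}{2}\|\cdot-x_0\|^2$, and $m\geq\mu$ yields modulus $m$ in both.
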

\begin{proof}
\begin{enumerate}
\item Since $x+\nu\mathbb{B}_d$ is a convex subset of $\mathcal{S}(12\Delta)$, by Part 3 of \cref{assumption:technical_objective}, the function $F_{x_0}$ is convex on $x+\nu\mathbb{B}_d$. As a result, from $x=\frac{1}{2}(x+\nu y)+\frac{1}{2}(x-\nu y)$, we have
\[
\begin{aligned}
F_{x_0}(x)
\leq\frac{1}{2}\left(F_{x_0}(x+\nu y)
+F_{x_0}(x-\nu y)\right).
\end{aligned}
\]
We then take the expectation with respect to $y\sim\mathcal{U}(\mathbb{B}_d)$ and get
\[
F_{x_0}(x)\leq 
\mathbb{E}_{y\sim\mathcal{U}(\mathbb{B}_d)}[F_{x_0}(x+\nu y)],
\]
where we used the fact that $-y$ and $y$ are identically distributed when $y\sim\mathcal{U}(\mathbb{B}_d)$. On the other hand
\[
\begin{aligned}
\mathbb{E}_{y\sim\mathcal{U}(\mathbb{B}_d)}[F_{x_0}(x+\nu y)]
={} & f_{\nu }(x) + m\mathbb{E}_{y\sim\mathcal{U}(\mathbb{B}_d)}\!\left[\|x+\nu y-x_0\|^2\right] \\
={} & f_{\nu}(x) + m\|x-x_0\|^2 + m\nu^2\,\mathbb{E}_{y\sim\mathcal{U}(\mathbb{B}_d)}\!\left[\|y\|^2\right] \\
\leq{} & F_{\nu,x_0}(x)+m\nu^2,
\end{aligned}
\]
where we used the elementary fact that $\mathbb{E}_{y\sim\mathcal{U}(\mathbb{B}_d)}\!\left[\|y\|^2\right]\leq 1$. The proof is now complete.

\item Evidently $x+\nu \mathbb{B}_d\subseteq x_0+\frac{11\Delta}{G}\mathbb{B}_d\subseteq\mathcal{S}(12\Delta)$ for $x\in x_0+\left(\frac{11\Delta}{G}-\nu\right)\mathbb{B}_d$. Thus $F_{\nu,x_0}$ is well-defined on $x_0+\left(\frac{11\Delta}{G}-\nu\right)\mathbb{B}_d$. We then have
\[
\begin{aligned}
F_{\nu,x_0}(x)-\frac{m}{2}\|x-x_0\|^2 ={} &  \mathbb{E}_{y\sim\mathcal{U}(\mathbb{B}_d)}[f(x+\nu y)]
+\frac{m}{2}\|x-x_0\|^2 \\
={} & \mathbb{E}_{y\sim\mathcal{U}(\mathbb{B}_d)}\!\left[f(x+\nu y)+\frac{m}{2}\|x+\nu y-x_0\|^2\right] - \frac{m\nu^2}{2}\mathbb{E}_{y\sim\mathcal{U}(\mathbb{B}_d)}\!\left[\|y\|^2\right] \\
={} & \mathbb{E}_{y\sim\mathcal{U}(\mathbb{B}_d)}\!\left[\tilde{F}(x+\nu y)\right]-\frac{m\nu^2d}{2(d+2)}
\end{aligned}
\]
for all $x\in x_0+\left(\frac{11\Delta}{G}-\nu\right)\mathbb{B}_d$, where we temporarily denote $\tilde{F}(x)= f(x)+\frac{m}{2}\|x-x_0\|^2$. On the other hand, by \cref{assumption:technical_objective} and the definition of $m$, the function $\tilde{F}$ is convex on $x_0+\frac{11\Delta}{G}\mathbb{B}_d$. Therefore the smoothed version $\mathbb{E}_{y\sim\mathcal{U}(\mathbb{B}_d)}\!\big[\tilde{F}(x+\nu y)\big]$ is also convex, and by using the above identity, we see that $F_{\nu,x_0}$ is strongly convex. \qedhere
\end{enumerate}
\end{proof}

Part 2 of \cref{lemma:bigF_smoothed_ver_comparison} then allows us to define
\begin{equation}
\label{eq:optimal_F_nu_x0_def}
x_{\nu}^{\star}\coloneq \argmin_{x\in x_0+\frac{10\Delta}{G}\mathbb{B}_d} F_{\nu,x_0}(x)
\end{equation}
for $\nu\leq\Delta/G$. The next lemma bounds the distance from $x_{\nu}^{\star}$ to $x_0$.

\begin{lemma}
\label{lemma:x_nu_star_distance_x0}
Suppose $\nu\leq G/(2m)$. Then $\|x_{\nu}^{\star}-x_0\|\leq 2\Delta/G$.
\end{lemma}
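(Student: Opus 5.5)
Since $x_\nu^\star$ minimizes $F_{\nu,x_0}$ over the ball $x_0+\frac{10\Delta}{G}\mathbb{B}_d$, which contains $x_0$, we have $F_{\nu,x_0}(x_\nu^\star)\leq F_{\nu,x_0}(x_0)=f_\nu(x_0)$. The plan is to combine this with a lower bound on $F_{\nu,x_0}(x_\nu^\star)$ in terms of $r\coloneqq\|x_\nu^\star-x_0\|$, and then solve the resulting quadratic inequality in $r$.

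First I check that Lemma~\ref{lemma:properties_smoothed_func} applies on the relevant set. Take $S=x_0+\frac{10\Delta}{G}\mathbb{B}_d$. Then $S+\nu\mathbb{B}_d\subseteq x_0+\frac{11\Delta}{G}\mathbb{B}_d\subseteq\mathcal{S}(12\Delta)$ by Part 1 of Lemma~\ref{lemma:convex_subdomain_prox} with $\alpha=12$, provided $\nu\leq\Delta/G$. This last condition follows from $\nu\leq G/(2m)$ and $m\geq G^2/\Delta$, which give $\nu\leq\Delta/(2G)$. Parts 1 and 2 of Lemma~\ref{lemma:properties_smoothed_func} then give, on $S$, that $f_\nu$ is $G$-Lipschitz and that $|f_\nu-f|\leq G\nu$.

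Next I lower bound the objective at the minimizer. By Lipschitz continuity,
\[
F_{\nu,x_0}(x_\nu^\star)=f_\nu(x_\nu^\star)+mr^2\geq f_\nu(x_0)-Gr+mr^2 .
\]
Combined with the upper bound from the first paragraph, this yields $mr^2-Gr\leq 0$, so $r\leq G/m\leq G\cdot\Delta/G^2=\Delta/G$. That is already within the claimed $2\Delta/G$. Alternatively, one could use $m$-strong convexity from Part 2 of Lemma~\ref{lemma:bigF_smoothed_ver_comparison} together with a first-order optimality argument, but the direct comparison is simpler and avoids the smoothing error altogether. The slack between $\Delta/G$ and $2\Delta/G$ also comfortably absorbs any $G\nu$ error if one instead compares against $f$ rather than $f_\nu$.

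The only real obstacle is making sure every point used lies where the Lipschitz bound for $f_\nu$ is valid, and that $x_\nu^\star$ is well defined. Both are handled by the containment $S+\nu\mathbb{B}_d\subseteq\mathcal{S}(12\Delta)$ established above. Existence of the minimizer follows from continuity of $F_{\nu,x_0}$ on the compact ball, and uniqueness from the strong convexity in Lemma~\ref{lemma:bigF_smoothed_ver_comparison}.
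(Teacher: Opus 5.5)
Your proof is correct, but it takes a different route from the paper's.

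\textbf{The paper's route.} It goes back to the unsmoothed objective and chains
$F_{x_0}(x_\nu^\star)\le F_{\nu,x_0}(x_\nu^\star)+m\nu^2\le f_\nu(x_0)+m\nu^2\le f(x_0)+G\nu+m\nu^2\le\frac{7}{4}\Delta$.
This uses Part 1 of \cref{lemma:bigF_smoothed_ver_comparison}, Part 1 of \cref{lemma:properties_smoothed_func}, and $\nu\le G/(2m)\le\Delta/(2G)$. It follows that $x_\nu^\star\in\mathcal{S}_{x_0}(7\Delta/4)$. The paper then invokes the sublevel-set localization in Part 3 of \cref{lemma:convex_subdomain_prox}. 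That part drops $f(x_0)>0$ and solves a quadratic inequality whose constant term is the level $\frac74\Delta$. The resulting radius is $\gamma(7/4)\frac{\Delta}{G}=\frac{1+2\sqrt{2}}{2}\frac{\Delta}{G}\le\frac{2\Delta}{G}$.

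\textbf{Your route.} You stay with the smoothed objective and compare $x_\nu^\star$ with $x_0$ directly. Since $f_\nu(x_0)$ appears on both sides, it cancels, leaving $mr^2\le Gr$ and hence $r\le G/m\le\Delta/G$. This is the standard ``prox displacement is at most the Lipschitz constant over $m$'' estimate. It avoids the smoothing errors $G\nu+m\nu^2$ and the positivity of $f$ altogether. It uses $\nu\le G/(2m)$ only to get $\nu\le\Delta/G$, which gives $S+\nu\mathbb{B}_d\subseteq\mathcal{S}(12\Delta)$ via Part 1 of \cref{lemma:convex_subdomain_prox} with $\alpha=12$. It also yields a constant roughly half of the paper's.

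\textbf{Trade-off.} The paper's argument reuses an already-proved localization lemma and additionally certifies the function-value bound $F_{x_0}(x_\nu^\star)\le\frac74\Delta$. However, only the distance bound is used downstream, so your version would serve equally well, with slightly better constants.
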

\begin{proof}
The choice of $m$ given in~\cref{eq:parameter_choice_m} implies that
\begin{equation}
\label{eq:nu_upper_bound_Delta_G}
\nu\leq \frac{G\Delta}{2G^2} \leq \frac{\Delta}{2G}.
\end{equation}
By the definition of $x_{\nu}^{\star}$, we have
\[
\begin{aligned}
F_{x_0}(x_{\nu}^{\star})\leq{} & F_{\nu,x_0}(x_{\nu}^{\star})+m\nu^2 \\
\leq{} & F_{\nu,x_0}(x_0)+m\nu^2=f_\nu(x_0)+m\nu^2 \\
\leq{} & f(x_0)+G \nu+m\nu^2 \\
\leq{} & \Delta + G\cdot\frac{\Delta}{2G} + m\cdot\frac{G}{2m}\cdot\frac{\Delta}{2G}
=\frac{7}{4}\Delta.
\end{aligned}
\]
Thus $x_{\nu}^{\star}\in\mathcal{S}_{x_0}(7\Delta/4)$. On the other hand, by Part 3 of \cref{lemma:convex_subdomain_prox}, we have
\[
\mathcal{S}_{x_0}(7\Delta/4)
\subseteq x_0+\frac{\gamma(7/4)\Delta}{G}\mathbb{B}_d
=x_0+\frac{(1+2\sqrt{2})\Delta}{2G}\mathbb{B}_d.
\]
Combining the previous results and using $(1+2\sqrt{2})/2\leq 2$ completes the proof.
\end{proof}

Henceforth, we assume that $\nu\leq G/(2m)\leq\Delta/(2G)$. We then set the safeguard set $\mathcal{C}$ in~\cref{eq:main_inner_iteration} as 
\begin{equation}
\mathcal{C}\coloneqq x_{\nu}^{\star}+\frac{\sqrt{60}\Delta}{G}\mathbb{B}_d.
\end{equation}
Evidently $\mathcal{C}$ is a compact set; \cref{lemma:x_nu_star_distance_x0} implies that $x_0\in\mathcal{C}$. Furthermore, it is straightforward to verify that $\mathcal{C}\subseteq x_0+\frac{10\Delta}{G}\mathbb{B}_d$: Indeed, for any $x\in \mathcal{C}$, we have
\[
\|x-x_0\|\leq \|x-x_\nu^\star\|+\|x_\nu^\star-x_0\| \leq\left(\sqrt{60}+2\right)\frac{\Delta}{G}\leq\frac{10\Delta}{G}.
\]
Consequently, we have 
\[
\mathcal{C}+\nu\mathbb{B}_d\subseteq\left(x_0+\frac{10\Delta}{G}\mathbb{B}_d\right)+\nu\mathbb{B}_d
\subseteq
x_0+\frac{11\Delta}{G}\subseteq
\mathcal{S}(12\Delta),
\]
which guarantees that i) we have $\mathcal{C}\subseteq\mathcal{D}_\nu$, so that $\sfG(x;z,\nu)$ and $F_{\nu,x_0}(x)$ are well-defined for $x\in\mathcal{C}$; ii) $F_{\nu,x_0}$ satisfies the properties in \cref{lemma:bigF_smoothed_ver_comparison}; and iii) $f_\nu$ and $\sfG(x;z,\nu)$ satisfy the properties listed in \cref{lemma:expectation_grad_est,lemma:properties_smoothed_func,lemma:sub-Gaussian_grad_est} with $G_\Delta$ replaced by $G=G_{12\Delta}$.

\subsection{Bounding the Failure Probability}

In this subsection, we shall show that, with high probability, all $\widehat{x}_t$ will lie in the safeguard set $\mathcal{C}$, meaning that $x_t=\widehat{x}_t$.

We first characterize the evolution of the squared distance from $\widehat{x}_t$ to $x_\nu^\star$, assuming that all the previous $\widehat{x}_1,\ldots,\widehat{x}_{t-1}$ lie in $\mathcal{C}$.
\begin{lemma}
Suppose $\nu\leq G/(2m)$, and $\eta_t m\leq 1/24$ for all $t\in[T]$. Let $t\in[T]$ be arbitrary, and suppose $\widehat{x}_1,\ldots,\widehat{x}_{t-1}\in\mathcal{C}$. Then
\begin{equation}
\label{eq:inner_loop_dist_x_nu_star_bound}
\begin{aligned}
\left\|\widehat{x}_t-x_{\nu}^{\star}\right\|^2
\leq{} & \prod_{k=1}^t\left(1 \!-\! \eta_k m\right)\|x_0-x_{\nu}^{\star}\|^2
+2\sum_{k=1}^t \!\left[\eta_k^2\prod_{j=k+1}^t\left(1 \!-\! \eta_j m\right)\right]
\left(\|g_k\|^2+8m^2\|x_0-x_\nu^\star\|^2\right) \\
& - 2\sum_{k=1}^t\!\left[
\eta_k\prod_{j=k+1}^t\left(1 \!-\! \eta_j m\right)
\right]\langle x_{k-1}-x_{\nu}^{\star},g_k-\nabla f_\nu(x_{k-1})\rangle.
\end{aligned}
\end{equation}
\end{lemma}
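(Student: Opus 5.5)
The proof will be a one-step recursion for $\|\widehat{x}_t-x_\nu^\star\|^2$, which I then unroll. Since $\widehat{x}_1,\ldots,\widehat{x}_{t-1}\in\mathcal{C}$, the safeguard never triggers, so $x_k=\widehat{x}_k$ for $k\le t-1$. Hence all of $x_0,\ldots,x_{t-1}$ lie in $\mathcal{C}\subseteq\mathcal{D}_\nu$, using $x_0\in\mathcal{C}$ from \cref{lemma:x_nu_star_distance_x0}, and $\widehat{x}_t=x_{t-1}-\eta_t(g_t+2m(x_{t-1}-x_0))$. I expand
\[
\|\widehat{x}_t-x_\nu^\star\|^2=\|x_{t-1}-x_\nu^\star\|^2-2\eta_t\langle x_{t-1}-x_\nu^\star,\,g_t+2m(x_{t-1}-x_0)\rangle+\eta_t^2\|g_t+2m(x_{t-1}-x_0)\|^2 .
\]
In the cross term I write $g_t=\nabla f_\nu(x_{t-1})+(g_t-\nabla f_\nu(x_{t-1}))$. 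The noise part $-2\eta_t\langle x_{t-1}-x_\nu^\star,g_t-\nabla f_\nu(x_{t-1})\rangle$ is kept as is. The remaining part is $-2\eta_t\langle x_{t-1}-x_\nu^\star,\nabla F_{\nu,x_0}(x_{t-1})\rangle$.

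To bound that remaining part, I use that $F_{\nu,x_0}$ is $m$-strongly convex on $x_0+\frac{10\Delta}{G}\mathbb{B}_d\supseteq\mathcal{C}$ (Part 2 of \cref{lemma:bigF_smoothed_ver_comparison}, since $\nu\le\Delta/(2G)$). I also use that $x_\nu^\star$ minimizes it over that ball, so $\langle\nabla F_{\nu,x_0}(x_\nu^\star),x-x_\nu^\star\rangle\ge0$ for $x$ in the ball. Strong monotonicity then gives $\langle x_{t-1}-x_\nu^\star,\nabla F_{\nu,x_0}(x_{t-1})\rangle\ge m\|x_{t-1}-x_\nu^\star\|^2$. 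This contributes $-2\eta_t m\|x_{t-1}-x_\nu^\star\|^2$.

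For the quadratic term I use $\|a+b\|^2\le2\|a\|^2+2\|b\|^2$ to get $2\eta_t^2\|g_t\|^2+8\eta_t^2m^2\|x_{t-1}-x_0\|^2$. I then bound $\|x_{t-1}-x_0\|^2\le2\|x_{t-1}-x_\nu^\star\|^2+2\|x_0-x_\nu^\star\|^2$. This yields $2\eta_t^2(\|g_t\|^2+8m^2\|x_0-x_\nu^\star\|^2)$ plus an extra $16\eta_t^2m^2\|x_{t-1}-x_\nu^\star\|^2$. Since $\eta_tm\le1/24$, the extra term is at most $\tfrac{2}{3}\eta_tm\|x_{t-1}-x_\nu^\star\|^2$. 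Combined with the $-2\eta_tm$ term, the coefficient on $\|x_{t-1}-x_\nu^\star\|^2$ is at most $1-\eta_tm$. So I obtain
\[
\|\widehat{x}_t-x_\nu^\star\|^2\le(1-\eta_tm)\|x_{t-1}-x_\nu^\star\|^2+2\eta_t^2(\|g_t\|^2+8m^2\|x_0-x_\nu^\star\|^2)-2\eta_t\langle x_{t-1}-x_\nu^\star,g_t-\nabla f_\nu(x_{t-1})\rangle .
\]

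Next I apply the same inequality at each step $k<t$, where $x_k=\widehat{x}_k$. I unroll the recursion from $k=t$ down to $k=1$, multiplying the step-$k$ inequality by $\prod_{j=k+1}^t(1-\eta_jm)$. These factors are positive because $\eta_jm\le1/24$, so the inequalities are preserved. This gives exactly \cref{eq:inner_loop_dist_x_nu_star_bound}. The noise inner products carry signed weights, which is fine since the products are nonnegative.

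The main obstacle I expect is justifying the first-order optimality / strong monotonicity step. It requires $F_{\nu,x_0}$ to be differentiable on the whole ball with $x_{t-1}$ and $x_\nu^\star$ inside it, which \cref{lemma:expectation_grad_est} and the inclusion $\mathcal{C}+\nu\mathbb{B}_d\subseteq\mathcal{S}(12\Delta)$ provide. It also requires $x_\nu^\star$ to be a constrained minimizer, so I rely on the variational inequality rather than $\nabla F_{\nu,x_0}(x_\nu^\star)=0$. That inequality is enough. Alternatively, $x_\nu^\star$ is interior by \cref{lemma:x_nu_star_distance_x0}, so the gradient actually vanishes there.
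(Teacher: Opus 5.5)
Your proposal is correct and matches the paper's proof essentially step for step. You use the same expansion of $\|\widehat{x}_k-x_\nu^\star\|^2$, the same key estimate $\langle x_{k-1}-x_\nu^\star,\nabla F_{\nu,x_0}(x_{k-1})\rangle\geq m\|x_{k-1}-x_\nu^\star\|^2$, the same absorption of the $16\eta_k^2m^2$ term via $\eta_k m\leq 1/24$ (giving $1-\frac{4}{3}\eta_k m\leq 1-\eta_k m$), and the same unrolling with nonnegative weights. The only cosmetic difference is how the key estimate is obtained: the paper adds the first-order strong-convexity inequality at $x_{k-1}$ to quadratic growth at the interior minimizer $x_\nu^\star$, whereas you use strong monotonicity plus the variational inequality.
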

\begin{proof}
Let $k\in[t]$ be arbitrary. By Part 2 of \cref{lemma:bigF_smoothed_ver_comparison}, $F_{\nu,x_0}$ is $m$-strongly convex on $\mathcal{C}$. Therefore
\begin{equation}
\label{eq:inner_loop_strong_convexity_1}
F_{\nu,x_0}(x_{\nu}^{\star})\geq F_{\nu,x_0}(x_{k-1})+\langle \nabla F_{\nu,x_0}(x_{k-1}),x_{\nu}^{\star}-x_{k-1}\rangle+\frac{m}{2}\|x_{\nu}^{\star}-x_{k-1}\|^2.
\end{equation}
On the other hand, \cref{lemma:x_nu_star_distance_x0} implies that $x_{\nu}^{\star}$ is in the interior of $\mathcal{C}$. Therefore
\begin{equation}
\label{eq:inner_loop_strong_convexity_2}
F_{\nu,x_0}(x_{k-1})-F_{\nu,x_0}(x_{\nu}^{\star})\geq\frac{m}{2}\|x_{k-1}-x_{\nu}^{\star}\|^2.
\end{equation}
Now we use the iteration $\widehat{x}_{k}=x_{k-1}-\eta_k(g_k+2m(x_{k-1}-x_0))$ and obtain
\begin{align*}
\|\widehat{x}_k-x_{\nu}^{\star}\|^2
={} &
\|{x}_{k-1}-x_{\nu}^{\star}\|^2
+\eta_k^2 \|g_k+2m(x_{k-1}-x_0)\|^2 \\
& -2\eta_k\langle x_{k-1}-x_{\nu}^{\star},\nabla F_{\nu,x_0}(x_{k-1})\rangle
-2\eta_k\langle x_{k-1}-x_{\nu}^{\star},g_k-\nabla f_\nu(x_{k-1})\rangle \\
\leq{} & \|{x}_{k-1}-x_{\nu}^{\star}\|^2
+\eta_k^2 \|g_k+2m(x_{k-1}-x_0)\|^2 \\
& +2\eta_k(F_{\nu,x_0}(x_{\nu}^{\star})-F_{\nu,x_0}(x_{k-1}))-\eta_k m\|x_{\nu}^{\star}-x_{k-1}\|^2 \\
& -2\eta_k\langle x_{k-1}-x_{\nu}^{\star},g_k-\nabla f_\nu(x_{k-1})\rangle \\
\leq{} & \|x_{k-1}-x_{\nu}^{\star}\|^2+\eta_k^2 \|g_k+2m(x_{k-1}-x_0)\|^2 \\
& -2\eta_k m\|x_{k-1}-x_{\nu}^{\star}\|^2
-2\eta_k\langle x_{k-1}-x_{\nu}^{\star},g_k-\nabla f_\nu(x_{k-1})\rangle \\
={} & (1-2\eta_k m)\|x_{k-1}-x_{\nu}^{\star}\|^2
+\eta_k^2 \|g_k+2m(x_{k-1}-x_0)\|^2 \\
& - 2\eta_k\langle x_{k-1}-x_{\nu}^{\star},g_k-\nabla f_\nu(x_{k-1})\rangle,
\end{align*}
where we used~\cref{eq:inner_loop_strong_convexity_1} in the first inequality and \cref{eq:inner_loop_strong_convexity_2} in the second inequality.
Finally, we note that
\[
\begin{aligned}
\|g_k+2m(x_{k-1}-x_0)\|^2
\leq{} & 2\|g_k\|^2+8m^2\|x_{k-1}-x_\nu^\star-(x_0-x_\nu^\star)\|^2\\
\leq{} & 2\|g_k\|^2 + 16m^2\|x_0-x_\nu^\star\|^2+16m^2\|x_{k-1}-x_\nu^\star\|^2.
\end{aligned}
\]
Thus
\[
\begin{aligned}
\|\widehat{x}_k-x_\nu^\star\|^2
\leq{} & (1-2\eta_k m + 16\eta_k^2m^2)\|x_{k-1}-x_\nu^\star\|^2
+2\eta_k^2(\|g_k\|^2 + 8m^2\|x_0-x_\nu^\star\|^2) \\
& - 2\eta_k\langle x_{k-1}-x_{\nu}^{\star},g_k-\nabla f_\nu(x_{k-1})\rangle \\
\leq{} & \left(1-\frac{4}{3}\eta_k m\right)\|x_{k-1}-x_\nu^\star\|^2
+2\eta_k^2(\|g_k\|^2 + 8m^2\|x_0-x_\nu^\star\|^2) \\
& - 2\eta_k\langle x_{k-1}-x_{\nu}^{\star},g_k-\nabla f_\nu(x_{k-1})\rangle,
\end{aligned}
\]
where we used $\eta_k m\leq 1/24$ to obtain $1-2\eta_km+16\eta_k^2m^2\leq 1-4\eta_k m/3$. The bound~\cref{eq:inner_loop_dist_x_nu_star_bound} now follows by induction and noting that $\widehat{x}_{k-1}=x_{k-1}$ for $k\in[t]$.
\end{proof}

\begin{corollary}
Suppose $\nu\leq G/(2m)$. Let $t\in[T]$ be arbitrary, and suppose
\[
\eta_k=\frac{3}{m(k+t_0)},\quad\forall k\in[t]
\]
for some $t_0\geq 71$. For each positive integer $i$, define the polynomial
\begin{equation}
\label{eq:definition_q_polynomials}
q_i(z;t_0) \coloneq \prod_{j=0}^{i-1} (z+t_0-j)
=(z+t_0)(z+t_0-1)\cdots(z+t_0-i+1).
\end{equation}
Then, assuming that $\widehat{x}_1,\ldots,\widehat{x}_{t-1}\in\mathcal{C}$, we have
\begin{equation}
\label{eq:inner_loop_dist_x_nu_star_bound2}
\begin{aligned}
q_4(t;t_0)\|\widehat{x}_t-x_{\nu}^{\star}\|^2
\leq{} & [q_4(0;t_0)+
48t(t\!+\!t_0\!-\!1)(t\!+\!3t_0\!-\!3)
]\|x_0-x_{\nu}^{\star}\|^2 \\
&
+\frac{18}{m^2}\sum_{k=1}^t q_2(k-2;t_0)\|g_k\|^2 \\
& -\frac{6}{m}\sum_{k=1}^t q_3(k-1;t_0)\langle x_{k-1}-x_{\nu}^{\star},g_k-\nabla f_{\nu}(x_{k-1})\rangle.
\end{aligned}
\end{equation}
\end{corollary}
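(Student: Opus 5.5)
The plan is to substitute the step size $\eta_k=3/(m(k+t_0))$ into the recursion behind \cref{eq:inner_loop_dist_x_nu_star_bound}, evaluate the resulting contraction products in closed form as ratios of the polynomials $q_4$, and multiply through by $q_4(t;t_0)$.

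\textbf{Step 1 (the contraction factor; main obstacle).} We need the sharper one-step factor $1-\tfrac43\eta_k m$ that is derived in the proof of the preceding lemma, not the weaker factor $1-\eta_k m$ that appears in its statement. With $\eta_k=3/(m(k+t_0))$, the condition $\eta_k m\le 1/24$ is exactly $k+t_0\ge 72$, which $t_0\ge 71$ guarantees. The hypothesis $\widehat{x}_1,\ldots,\widehat{x}_{t-1}\in\mathcal{C}$ lets us apply the one-step inequality for $k=1,\ldots,t$. The factor then becomes
\[
1-\frac{4}{k+t_0}=\frac{k+t_0-4}{k+t_0}.
\]
The product over $j=k+1,\ldots,t$ telescopes. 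Every numerator factor $j+t_0-4$ with $j\ge k+5$ cancels a denominator factor $j'+t_0$ with $j'=j-4\le t-4$. What remains is four numerator factors $(k+t_0-3)\cdots(k+t_0)$ over four denominator factors $(t+t_0-3)\cdots(t+t_0)$, so
\[
\prod_{j=k+1}^t\Bigl(1-\tfrac{4}{j+t_0}\Bigr)=\frac{q_4(k;t_0)}{q_4(t;t_0)}.
\]
Iterating the sharpened recursion exactly as in the induction of the preceding lemma, and using $\widehat{x}_{k-1}=x_{k-1}$ for $k\in[t]$, yields \cref{eq:inner_loop_dist_x_nu_star_bound} with these products in place of $\prod(1-\eta_j m)$.

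\textbf{Step 2 (term-by-term comparison).} Multiply the iterated bound by $q_4(t;t_0)$.
\begin{itemize}
\item The initial term becomes $q_4(0;t_0)\|x_0-x_\nu^\star\|^2$.
\item The noise term has weight
\[
2\eta_k q_4(k;t_0)=\frac{6}{m}(k+t_0-1)(k+t_0-2)(k+t_0-3)=\frac{6}{m}\,q_3(k-1;t_0),
\]
which matches the stated coefficient exactly.
\item The squared-gradient term has weight
\[
2\eta_k^2q_4(k;t_0)=\frac{18}{m^2}\cdot\frac{(k+t_0-1)(k+t_0-2)(k+t_0-3)}{k+t_0}\le\frac{18}{m^2}\,q_2(k-2;t_0),
\]
using $(k+t_0-1)/(k+t_0)\le 1$. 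This gives the $\|g_k\|^2$ sum.
\end{itemize}

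\textbf{Step 3 (the $8m^2\|x_0-x_\nu^\star\|^2$ contribution).} Step 2 shows this term contributes at most $144\sum_{k=1}^t q_2(k-2;t_0)\,\|x_0-x_\nu^\star\|^2$. Write $a=t_0-1$, so that $q_2(k-2;t_0)=(k-1+a)(k-2+a)\le (k-1+a)^2$. Since $s\mapsto(s+a)^2$ is increasing, a left-endpoint comparison with the integral gives
\[
\sum_{k=1}^t (k-1+a)^2\le\int_0^t(s+a)^2\,ds=\frac{t\,(t^2+3ta+3a^2)}{3}\le\frac{t\,(t+a)(t+3a)}{3}.
\]
Multiplying by $144$ gives $48\,t(t+t_0-1)(t+3t_0-3)$. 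Adding this to $q_4(0;t_0)$ produces the bracketed coefficient, and \cref{eq:inner_loop_dist_x_nu_star_bound2} follows.
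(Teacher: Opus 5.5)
Your proposal is correct and follows essentially the same route as the paper: substitute the step sizes, telescope the contraction products into $q_4(k;t_0)/q_4(t;t_0)$, multiply through by $q_4(t;t_0)$, bound $2\eta_k^2 q_4(k;t_0)$ by $\frac{18}{m^2}q_2(k-2;t_0)$, and control $\sum_{k=1}^t q_2(k-2;t_0)$ by the same integral comparison. Your Step 1 remark is also right: the paper's proof uses the factor $\frac{j+t_0-4}{j+t_0}$, which is the sharper $1-\frac{4}{3}\eta_j m$ established inside the preceding lemma's proof, even though it writes it as $\prod_{j=k+1}^t(1-\eta_j m)$.
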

\begin{proof}
Let $k\in[t]$ be arbitrary. By the chosen step sizes, we have
\[
\begin{aligned}
\prod_{j=k+1}^t\left(1-\eta_j m\right)
={} & \prod_{j=k+1}^t
\frac{j+t_0-4}{j+t_0}
=\frac{q_4(k;t_0)}{q_4(t;t_0)}.
\end{aligned}
\]
Therefore
\[
2\eta_k^2\prod_{j=k+1}^t(1-\eta_j m)
=
\frac{18q_3(k-1;t_0)}{m^2(k+t_0)q_4(t;t_0)}
\leq \frac{18q_2(k-2;t_0)}{m^2q_4(t;t_0)},
\]
and
\[
2\eta_k\prod_{j=k+1}^t(1-\eta_j m)
=\frac{6q_3(k-1;t_0)}{mq_4(t;t_0)}.
\]
We then note that
\[
\begin{aligned}
& 2\sum_{k=1}^t \!\left[\eta_k^2\prod_{j=k+1}^t\left(1 \!-\! \eta_j m\right)\right]
\cdot 8m^2\|x_0-x_\nu^\star\|^2 \\
\leq{} & \frac{144}{q_4(t;t_0)}
\sum_{k=1}^t q_2(k-2;t_0)
\cdot \|x_0-x_\nu^\star\|^2 \\
\leq{} & \frac{48t(t+t_0-1)(t+3t_0-3)}{q_4(t;t_0)}\cdot \|x_0-x_\nu^\star\|^2,
\end{aligned}
\]
where in the last step we used
\[
\begin{aligned}
\sum_{k=1}^t q_2(k-2;t_0)
\leq{} &
\sum_{k=1}^t (k+t_0-2)^2
\leq 
\int_1^{t+1}(x+t_0-2)\,\ud x \\
={} &
\frac{1}{3}[(t+t_0-1)^3-(t_0-1)^3]
\leq \frac{t}{3}(t+t_0-1)(t+3t_0-3).
\end{aligned}
\]
Combining these results with~\cref{eq:inner_loop_dist_x_nu_star_bound} completes the proof.
\end{proof}

We now define the following events:
\begin{align}
E_1 \coloneq{} & 
\bigcap_{t=1}^T
\left\{
-\frac{6}{m}\sum_{k=1}^t
q_3(k-1;t_0)
\langle x_{k-1}-x_{\nu}^{\star},g_k-\nabla f_{\nu}(x_{k-1})\rangle\leq\frac{48\Delta^2}{G^2}
q_4(t;t_0)\right\},
\\
E_2 \coloneq{} &
\bigcap_{t=1}^T
\left\{
\frac{18}{m^2}\sum_{k=1}^t q_2(k-2;t_0) \|g_k\|^2
\leq \frac{4\Delta^2}{G^2}q_4(t;t_0)
\right\}.
\end{align}

\begin{lemma}
\label{lemma:E1_cap_E2_implies_bounded_x}
Suppose $\nu\leq G/(2m)$ and $\eta_t=\frac{3}{m(t+t_0)}$ for all $t\in[T]$ for some $t_0\geq 71$. Then
\[
E_1\cap E_2\subseteq 
\bigcap_{t=1}^T\left\{x_t=\widehat{x}_t\in\mathcal{C}\right\}.
\]
\end{lemma}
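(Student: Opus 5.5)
We argue by induction on $t$ that, on $E_1\cap E_2$, every $\widehat{x}_t$ lies in $\mathcal{C}$. Once $\widehat{x}_t\in\mathcal{C}$, the safeguard rule in~\cref{eq:main_inner_iteration} gives $x_t=\widehat{x}_t$. So fix an outcome in $E_1\cap E_2$ and suppose $\widehat{x}_1,\ldots,\widehat{x}_{t-1}\in\mathcal{C}$; for $t=1$ this hypothesis is vacuous. The step sizes satisfy $\eta_k m=3/(k+t_0)\le 3/72=1/24$ because $t_0\ge 71$, so the preceding corollary applies and~\cref{eq:inner_loop_dist_x_nu_star_bound2} holds at index $t$.

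Next we bound each of the three terms on the right-hand side of~\cref{eq:inner_loop_dist_x_nu_star_bound2} by a multiple of $\frac{\Delta^2}{G^2}q_4(t;t_0)$.

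\textbf{The $g_k$ term.} The event $E_2$ gives directly that $\frac{18}{m^2}\sum_{k=1}^t q_2(k-2;t_0)\|g_k\|^2\le \frac{4\Delta^2}{G^2}q_4(t;t_0)$.

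\textbf{The martingale-type term.} The event $E_1$ gives that the term $-\frac{6}{m}\sum_{k=1}^t q_3(k-1;t_0)\langle\cdots\rangle$ is at most $\frac{48\Delta^2}{G^2}q_4(t;t_0)$.

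\textbf{The deterministic term.} \Cref{lemma:x_nu_star_distance_x0} gives $\|x_0-x_\nu^\star\|^2\le 4\Delta^2/G^2$. It remains to show
\[
q_4(0;t_0)+48t(t+t_0-1)(t+3t_0-3)\le 2\,q_4(t;t_0).
\]
Here $q_4(0;t_0)\le q_4(t;t_0)$ trivially. For the second summand we compare with $q_4(t;t_0)=(t+t_0)(t+t_0-1)(t+t_0-2)(t+t_0-3)$. We have $t(t+t_0-1)\le (t+t_0-1)(t+t_0-2)$ after absorbing constants, and $t+3t_0-3\le 3(t+t_0-3)$ up to small shifts. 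The remaining factor $(t+t_0)\ge t_0+1\ge 72$ absorbs the constant $48\cdot 3=144$ once we check it with a margin. This yields a factor of about $2$ or less, so that combined with $4\Delta^2/G^2$ the deterministic term contributes at most roughly $8\Delta^2/G^2\cdot q_4(t;t_0)$.

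Dividing~\cref{eq:inner_loop_dist_x_nu_star_bound2} by $q_4(t;t_0)>0$, which is positive since $t_0\ge 3$, gives
\[
\|\widehat{x}_t-x_\nu^\star\|^2\le (8+4+48)\frac{\Delta^2}{G^2}=\frac{60\Delta^2}{G^2}.
\]
Hence $\widehat{x}_t\in\mathcal{C}=x_\nu^\star+\frac{\sqrt{60}\Delta}{G}\mathbb{B}_d$, which closes the induction.

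The main obstacle is the elementary polynomial inequality in the deterministic term. The budget $60=8+4+48$ must be met exactly, so the deterministic coefficient must not exceed $2q_4(t;t_0)$. I would verify it carefully as follows. Write $s=t+t_0$ and use $t\le s-t_0$. Then show
\[
48t(s-1)(s+2t_0-3)\le s(s-1)(s-2)(s-3)
\]
for $t_0\ge 71$, by bounding $48t(s+2t_0-3)\le 48\cdot 3\,(s-2)(s-3)\cdot\frac{t}{s}\cdot c$. If the constant $71$ does not give enough room, I would instead bound $t(t+3t_0-3)\le 3(t+t_0-2)(t+t_0-3)$ and use $48\cdot 3/(t+t_0)\le 144/72=2$, which gives a factor of $2$ for that piece. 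That would push the total slightly above $60$, so I would reconcile it with the exact constants of $E_1$ and $E_2$, possibly using $\|x_0-x_\nu^\star\|\le \frac{(1+2\sqrt2)\Delta}{2G}$, which is sharper than $2\Delta/G$, to recover the needed slack.
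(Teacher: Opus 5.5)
\textbf{Gap: the polynomial inequality for the deterministic term is never proved.} Your induction skeleton is the same as the paper's. You apply \cref{eq:inner_loop_dist_x_nu_star_bound2}, which is valid since $t_0\ge 71$ gives $\eta_k m\le 1/24$. You use $E_2$ for the $\|g_k\|^2$ term, $E_1$ for the cross term, and $\|x_0-x_\nu^\star\|\le 2\Delta/G$ for the deterministic term. Your target $q_4(0;t_0)+48t(t+t_0-1)(t+3t_0-3)\le 2q_4(t;t_0)$ is true and would suffice. But it is the only nontrivial step, and nothing you wrote establishes it.

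\begin{itemize}
\item The argument you actually spell out ($t\le t+t_0-2$, $t+3t_0-3\approx 3(t+t_0-3)$, $144/(t+t_0)\le 2$) bounds the second summand \emph{alone} by roughly $2q_4(t;t_0)$. Adding $q_4(0;t_0)\le q_4(t;t_0)$ gives only $3q_4(t;t_0)$, i.e.\ a total of $12+48+4=64$, not $60$. So the ``$8\Delta^2/G^2$'' you state does not follow from that argument.
\item The refined sketch at the end leaves the constant $c$ unspecified and is never carried out.
\item Your fallback fails numerically. With $\|x_0-x_\nu^\star\|^2\le\frac{(9+4\sqrt2)\Delta^2}{4G^2}\approx 3.66\,\Delta^2/G^2$ you still get about $3\cdot 3.66+52\approx 63>60$.
\end{itemize}
The constants $48$ and $4$ in $E_1,E_2$ and the radius $\sqrt{60}\,\Delta/G$ of $\mathcal{C}$ are fixed, so there is no other slack, and as written the induction does not close.

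\textbf{Missing idea and fixes.} The point you lose is that $48t(t+3t_0-3)$ is only quadratic in $t$, while $(t+t_0)(t+t_0-2)(t+t_0-3)$ is cubic. A termwise comparison throws this away; at $t_0=71$ the true ratio of these two expressions never exceeds about $0.37$.
\begin{itemize}
\item The paper proves the stronger bound $96t(t+3t_0-3)\le (t+t_0)(t+t_0-2)(t+t_0-3)$. It sets $h(t,t_0)=(t+t_0)(t+t_0-2)(t+t_0-3)-96t(t+3t_0-3)$ and shows $\partial h/\partial t_0=6+3t^2+3t_0^2-10t_0+t(6t_0-298)>0$ for $t_0\ge 71$, $t\ge 1$. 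It then checks $h(t,71)=t^3+112t^2-5741t+333132\ge 0$ for $t\ge1$. Hence the deterministic coefficient is at most $\tfrac32 q_4(t;t_0)$, and the total is $6+48+4=58\le 60$.
\item For your weaker factor-$2$ target, a short alternative works. Put $s=t+t_0\ge 72$. AM--GM gives $t(t+3t_0-3)=\tfrac12(2s-2t_0)(s+2t_0-3)\le\tfrac98(s-1)^2$. Also $s(s-2)(s-3)-54(s-1)^2=s^3-59s^2+114s-54$ is positive and increasing on $[72,\infty)$. Together these give $48t(t+t_0-1)(t+3t_0-3)\le q_4(t;t_0)$, which is exactly what your $2q_4(t;t_0)$ bound needs.
\end{itemize}
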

\begin{proof}
Let us temporarily denote
\[
h(t,t_0)=(t+t_0)(t+t_0-2)(t+t_0-3)-96t(t+3t_0-3).
\]
Then we have
\[
\frac{\partial h(t,t_0)}{\partial t_0}
=6+3t^2+3t_0^2-10t_0+t(6t_0-298),
\]
which is greater than $0$ if $t_0\geq 71$ and $t\geq 1$. Hence
\[
h(t,t_0)\geq h(t,71)
=t^3+112t^2-5741t+333132,
\]
and it is elementary to check that $h(t,71)\geq 0$ for all $t\geq 1$. We see that $h(t;t_0)\geq 0$ whenever $t_0\geq 71$ and $t\geq 1$. Consequently, $96t(t+3t_0-3)\leq (t+t_0)(t+t_0-2)(t+t_0-3)$, which leads to
\begin{equation}
\label{eq:E1_cap_E2_implies_bounded_x_temp1}
\begin{aligned}
\frac{q_4(t;t_0)+48t(t+t_0-1)(t+3t_0-3)}{q_4(t;t_0)}
\leq 1+\frac{48t(t+3t_0-3)}{(t+t_0)(t+t_0-2)(t+t_0-3)}
\leq \frac{3}{2}
\end{aligned}
\end{equation}
for all $t\geq 1$.

We now prove $E_1\cap E_2\subseteq\bigcap_{t=1}^T \{x_t=\widehat{x}_t\in\mathcal{C}\}$ by induction. By \cref{lemma:x_nu_star_distance_x0}, we know that $\|x_0-x_{\nu}^{\star}\|\leq 2\Delta/G$. Then \cref{eq:inner_loop_dist_x_nu_star_bound2} and~\cref{eq:E1_cap_E2_implies_bounded_x_temp1} lead to
\[
\begin{aligned}
q_4(1;t_0) \|\widehat{x}_1-x_{\nu}^{\star}\|^2
\leq{} & \frac{3}{2}q_4(1;t_0)\left(\frac{2\Delta}{G}\right)^{\!2}
+\frac{18}{m^2}q_2(-1;t_0)\|g_1\|^2 \\
&
-\frac{6}{m}q_3(0;t_0)\langle x_0-x_{\nu}^{\star},g_1-\nabla f_\nu(x_0)\rangle.
\end{aligned}
\]
On the other hand, by definition, on $E_1\cap E_2$ we have
\begin{align*}
-\frac{6}{m}q_3(0;t_0)\langle x_0-x_{\nu}^{\star},g_1-\nabla f_\nu(x_0)\rangle
\leq{} &
\frac{48\Delta^2}{G^2}q_4(1;t_0), 
\\
\frac{18}{m^2}q_2(-1;t_0)\|g_1\|^2\leq{} &
\frac{4\Delta^2}{G^2}q_4(1;t_0).
\end{align*}
Thus on $E_1\cap E_2$ we have
\[
\begin{aligned}
q_4(1;t_0) \|\widehat{x}_1-x_{\nu}^{\star}\|^2
\leq{} & \frac{3}{2}q_4(1;t_0)\left(\frac{2\Delta}{G}\right)^2
+\frac{48\Delta^2}{G^2}q_4(1;t_0)
+\frac{4\Delta^2}{G^2}q_4(1;t_0)
\leq
\frac{60\Delta^2}{G^2}q_4(1;t_0)
\end{aligned}
\]
We see that $E_1\cap E_2\subseteq \{x_1=\widehat{x}_1\in\mathcal{C}\}$, and the base case is proved.

Now suppose $E_1\cap E_2\subseteq \bigcap_{k=1}^{t-1}\{x_k=\widehat{x}_k\in\mathcal{C}\}$ for some $t\in\{2,\ldots,T\}$, and suppose $E_1\cap E_2$ holds. Then we have
\[
-\frac{6}{m}\sum_{k=1}^t q_3(k-1;t_0)\langle x_{k-1}-x_{\nu}^{\star},g_k-\nabla f_{\nu}(x_{k-1})\rangle\leq\frac{48\Delta^2}{G^2}
q_4(t;t_0),
\]
and
\[
\frac{18}{m^2}\sum_{k=1}^t 
q_2(k-2;t_0)\|g_k\|^2
\leq \frac{4\Delta^2}{G^2}q_4(t;t_0).
\]
Since
\[
E_1\cap E_2\subseteq \bigcap_{k=1}^{t-1}\{x_k=\widehat{x}_k\in\mathcal{C}\}=\{\widehat{x}_1,\ldots,\widehat{x}_{k-1}\text{ are all in }\mathcal{C}\},
\]
we can now apply~\cref{eq:inner_loop_dist_x_nu_star_bound2} together with the above bounds to get
\[
\begin{aligned}
q_4(t;t_0)\|\widehat{x}_t-x_{\nu}^{\star}\|^2
\leq{} & \frac{3}{2}q_4(t;t_0)\left(\frac{2\Delta}{G}\right)^{\!2}
+\frac{48\Delta^2}{G^2}
q_4(t;t_0)+\frac{4\Delta^2}{G^2}
q_4(t;t_0)
\leq
\frac{60\Delta^2}{G^2}q_4(t;t_0),
\end{aligned}
\]
showing that $\widehat{x}_t\in\mathcal{C}$. Therefore $E_1\cap E_2\subseteq\bigcap_{k=1}^t \{x_k=\widehat{x}_k\in\mathcal{C}\}$. By induction we see that $E_1\cap E_2\subseteq\bigcap_{t=1}^T \{x_t=\widehat{x}_t\in\mathcal{C}\}$. The proof is now complete.
\end{proof}

Now we try to find the value of $t_0$, so that the events $E_1$ and $E_2$ will have high probabilities. The following lemma bounds the probability of $E_2$.

\begin{lemma}
\label{lemma:probability_E2}
Let $\delta\in(0,1/5)$ be arbitrary. Suppose $\nu\leq G/(2m)$, and for each $t\in[T]$,
\[
\eta_t=\frac{3}{m(t+t_0)},\qquad
\text{where}\quad
t_0\geq 6d\ln\frac{4}{\delta}.
\]
Then $\mathbb{P}(E_2)\geq 1-\delta$.
\end{lemma}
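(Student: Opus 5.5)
The event $E_2$ concerns only the norms $\|g_k\|^2$. I plan to apply \cref{lemma:time-uniform_poly_growth_v} to the non-negative variables $h_k=q_2(k-2;t_0)\|g_k\|^2$, after dealing with one subtlety. The sub-Gaussian bound of \cref{lemma:sub-Gaussian_grad_est} requires $x_{k-1}+\nu\mathbb{B}_d\subseteq\mathcal{S}(12\Delta)$. This holds automatically, because the safeguarded iteration~\cref{eq:main_inner_iteration} keeps every $x_{k-1}$ in $\mathcal{C}$ and $\mathcal{C}+\nu\mathbb{B}_d\subseteq\mathcal{S}(12\Delta)$. Conditioning on $\mathcal{F}_{k-1}$, which fixes $x_{k-1}$, \cref{eq:sub-gaussiaon_grad_est_norm} then gives
\[
\mathbb{E}\Econd*{\exp\!\left(\frac{h_k}{v_k}\right)}{\mathcal{F}_{k-1}}\leq 2,
\qquad v_k\coloneqq 4G^2d\,q_2(k-2;t_0).
\]
These $v_k$ are deterministic and nondecreasing in $k$, as the lemma requires.

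Next I relate the threshold in $E_2$ to $V_t=\sum_{k\le t}v_k$. The event $E_2$ is $\sum_{k\le t}h_k\le \frac{2m^2\Delta^2}{9G^2}q_4(t;t_0)$ for every $t$, and $m\ge G^2/\Delta$ gives $m^2\Delta^2/G^2\ge G^2$. So it suffices to show
\[
\sum_{k\le t}h_k\le c\,V_t\quad\text{with}\quad cV_t\le \tfrac{2}{9}G^2q_4(t;t_0).
\]
Since $V_t\le 4G^2d\cdot\frac{t}{3}(t+t_0-1)(t+3t_0-3)$ by the estimate in the preceding corollary, I can take $c$ of order $q_4(t;t_0)/\bigl(d\,t(t+t_0)^2\bigr)\gtrsim (t+t_0)/(d t)\ge t_0/d$, uniformly in $t$ up to constants. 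The choice is then a constant $c=\kappa t_0/d$ for an absolute $\kappa$, checked for $t\ge1$ and $t_0\ge 71$ with the same kind of elementary polynomial comparison as in \cref{lemma:E1_cap_E2_implies_bounded_x}.

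It remains to verify condition~\cref{eq:lemma_time-uniform_poly_growth_v_condition}. Since $q_2(k-2;t_0)\approx(k+t_0)^2$, we get $V_t/v_t\approx \frac{(t+t_0)^3-t_0^3}{3(t+t_0)^2}$, which is at least about $t/3$. Some care is needed at small $t$, where $V_1/v_1=1$. I would therefore take $\alpha=1/4$ and $\beta$ a constant below $1$, say $\beta=3/4$, and verify the inequality directly.

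\cref{lemma:time-uniform_poly_growth_v} then bounds the failure probability by
\[
\frac{e^{-(\alpha+\beta)(c-\ln2)}}{1-e^{-\alpha(c-\ln 2)}}.
\]
With $c\gtrsim t_0/d\ge 6\ln(4/\delta)$, the numerator is at most of order $\delta^{\Theta(1)}$, and the denominator is bounded below since $c$ is large. The main obstacle is bookkeeping the constants, so that the factor $6$ in $t_0\ge 6d\ln(4/\delta)$ together with $\delta<1/5$ gives exactly $\le\delta$. For example, with $\alpha+\beta=1$ one needs $c-\ln2\ge\ln(4/\delta)+\ln(\text{const})$, and $c$ must be extracted sharply from the polynomial comparison. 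If the constants do not close at first, I would enlarge $c$ using the slack $q_4(t;t_0)\ge (t+t_0-3)^4$ against $t(t+t_0)^2$, and use $\delta<1/5$ to absorb the denominator.
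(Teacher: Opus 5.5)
Your proposal is essentially the paper's proof. It uses the same $h_k=q_2(k-2;t_0)\|g_k\|^2$ and $v_k=4G^2d\,q_2(k-2;t_0)$ fed into \cref{lemma:time-uniform_poly_growth_v}, the same reduction of the threshold via $m\ge G^2/\Delta$, and the same use of the safeguard keeping $x_{k-1}\in\mathcal{C}$ so that \cref{lemma:sub-Gaussian_grad_est} applies (the paper leaves this last step implicit).

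The constant bookkeeping you flag closes directly with the paper's choices.
\begin{itemize}
\item The telescoping identity $3q_2(k-2;t_0)=q_3(k-1;t_0)-q_3(k-2;t_0)$ gives $V_t\le\frac{4G^2d}{3(t+t_0)}q_4(t;t_0)$. Hence $c=\ln\frac{4}{\delta}\le\frac{t_0}{6d}$ already yields $cV_t\le\frac{2G^2}{9}q_4(t;t_0)$ for every $t$.
\item This route needs no appeal to $t_0\ge 71$, which is not a hypothesis of this lemma. It also avoids your order-of-magnitude heuristic, whose step $(t+t_0)/(dt)\ge t_0/d$ is false as written.
\item The exact growth constants $\alpha=\frac13$, $\beta=\frac23$ hold with equality at $t=1$. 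They give the failure bound $\frac{\delta/2}{1-(\delta/2)^{1/3}}\le\delta$ for $\delta<1/5$.
\item With your $\alpha=\frac14$ and this $c$, the bound becomes $\frac{\delta/2}{1-(\delta/2)^{1/4}}$, which exceeds $\delta$ for $\delta>1/8$. That would force your enlarged-$c$ fallback, which works but is unnecessary.
\end{itemize}
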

\begin{proof}
For each $t\in[T]$, let
\begin{align*}
h_t ={} & (t+t_0-2)(t+t_0-3)\|g_t\|^2, \\
v_t={} & 4(t+t_0-2)(t+t_0-3)G^2d, \\
V_t={} &
\sum_{k=1}^t v_k=\frac{4G^2d}{3}t
\left[(t+t_0-2)(t+2t_0-4)+(t_0-1)(t_0-3)\right].
\end{align*}
We then have $h_t\geq 0$ and
\[
\begin{aligned}
\mathbb{E}\Econd*{\exp\!\left(\frac{h_t}{v_t}\right)}{\mathcal{F}_{t-1}}
={} & \mathbb{E}\Econd*{\exp\!\left(\frac{\|g_t\|^2}{4G^2d}\right)}{\mathcal{F}_{t-1}}
\leq 2,
\end{aligned}
\]
where the last inequality follows from \cref{lemma:sub-Gaussian_grad_est}. Next, we note that
\[
\begin{aligned}
\frac{V_t}{v_t}-\frac{t}{3}
={} &
\frac{(t_0-1)(1+(2t_0-5)/t)}{3(1+(t_0-2)/t)(1+(t_0-3)/t)}.
\end{aligned}
\]
Since the function $x\mapsto \frac{1+(a+b)x}{(1+ax)(1+bx)}$ is decreasing over $x\in(0,+\infty)$ when $a>0$ and $b>0$, we see that
\[
\frac{(t_0-1)(1+(2t_0-5)/t)}{3(1+(t_0-2)/t)(1+(t_0-3)/t)}
\geq \frac{(t_0-1)(1+(2t_0-5)/1)}{3(1+(t_0-2)/1)(1+(t_0-3)/1)}=\frac{2}{3},
\]
and therefore
\[
\frac{V_t}{v_t}\geq\frac{t+2}{3}
\]
for all $t\in[T]$. We can now apply \cref{lemma:time-uniform_poly_growth_v} and obtain
\[
\mathbb{P}\!\left(
\exists t\in[T]\text{ s.t. }
\sum_{k=1}^t h_k> \left(\ln\frac{4}{\delta}\right)V_t \right)
\leq \frac{\exp\!\left(-\left(\ln\frac{4}{\delta}-\ln 2\right)\right)}
{1-\exp\!\left(-\frac{1}{3}\left(\ln\frac{4}{\delta}-\ln 2\right)\right)}
\leq\delta
\]
for any $\delta\in(0,1/5)$. Then, since
\[
\begin{aligned}
q_3(t-1;t_0)
-t\left[(t+t_0-2)(t+2t_0-4)+(t_0-1)(t_0-3)\right]
=t_0(t_0-3)^2+2t_0-6
\geq 0,
\end{aligned}
\]
we have
\[
\begin{aligned}
\frac{V_t}{q_4(t;t_0)}
={} & \frac{4G^2d\cdot t\left[(t+t_0-2)(t+2t_0-4)+(t_0-1)(t_0-3)\right]}{3q_4(t;t_0)} \\
\leq{} &
\frac{4G^2d\cdot q_3(t-1;t_0)}{3q_4(t;t_0)}
=\frac{4G^2d}{3(t+t_0)}
\leq\frac{4G^2d}{3t_0}.
\end{aligned}
\]
As a result, by $t_0\geq 6d\ln(4/\delta)$, we get
\[
\left(\ln\frac{4}{\delta}\right)V_t
\leq \left(\ln\frac{4}{\delta}\right)\frac{4G^2d}{18d\ln(4/\delta)}\cdot q_4(t;t_0)
\leq \frac{2G^2}{9}q_4(t;t_0)\leq \frac{2m^2\Delta^2}{9G^2}q_4(t;t_0),
\]
where we used $G\leq m\Delta/G$, a consequence of the definition of $m$. We can now conclude that
\[
\begin{aligned}
\mathbb{P}(E_2^c) ={} &
\mathbb{P}\!\left(
\exists t\in[T]\text{ s.t. }
\sum_{k=1}^t h_k> \frac{2m^2\Delta^2}{9G^2}q_4(t;t_0) \right) \\
\leq{} & \mathbb{P}\!\left(
\exists t\in[T]\text{ s.t. }
\sum_{k=1}^t h_k> \left(\ln\frac{4}{\delta}\right)V_t \right)
\leq\delta,
\end{aligned}
\]
which finishes the proof.
\end{proof}

To bound the probability of $E_1$, we need an auxiliary elementary lemma.

\begin{lemma}
\label{lemma:elementary_log_bound}
Given positive constants $A,B$ satisfying $AB\geq 1$, let
\[
h(x) = A\ln(1+B\ln x)-x.
\]
Then for all $x\geq 1$, we have
\[
h(x)\leq A\ln\!\left(1+B\ln(AB)\right)-1.
\]
\end{lemma}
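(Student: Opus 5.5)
The plan is to maximize $h$ over $[1,\infty)$ by calculus and then bound the maximum value. Since $x\ge 1$ gives $\ln x\ge 0$, the function $h$ is well defined and differentiable on $[1,\infty)$, with
\[
h'(x)=\frac{AB}{x(1+B\ln x)}-1 .
\]
The map $x\mapsto x(1+B\ln x)$ is strictly increasing and tends to $+\infty$, so $h'$ is decreasing. Thus $h$ is concave on $[1,\infty)$, and its maximum is attained either at $x=1$ or at the unique critical point $x^\ast$ solving $x^\ast(1+B\ln x^\ast)=AB$.

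First, $h'(1)=AB-1\ge 0$ by hypothesis, so the maximizer is an interior critical point $x^\ast\ge 1$; if $AB=1$, then $x^\ast=1$. Second, at $x^\ast$ the bound $1+B\ln x^\ast\ge 1$ gives $x^\ast\le AB$, and hence
\[
h(x^\ast)=A\ln(1+B\ln x^\ast)-x^\ast\le A\ln\!\left(1+B\ln(AB)\right)-x^\ast\le A\ln\!\left(1+B\ln(AB)\right)-1,
\]
where the last step uses $x^\ast\ge 1$. Since $h(x)\le h(x^\ast)$ for all $x\ge 1$, the claim follows. Monotonicity of $\ln$ and $\ln(AB)\ge 0$ make every logarithm above well defined.

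There is no real obstacle. The only point needing care is that the critical point lies in $[1,AB]$. This rests on the sign of $h'(1)$, which is exactly where the assumption $AB\ge 1$ enters. An alternative that avoids locating $x^\ast$: for $x\ge AB$, bound $A\ln(1+B\ln x)$ directly by concavity against the tangent line at $x=AB$; for $1\le x\le AB$, use the monotone bound $A\ln(1+B\ln x)\le A\ln(1+B\ln(AB))$ together with $-x\le -1$. The critical-point argument is shorter, so I would use that.
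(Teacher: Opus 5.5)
Your proof is correct and is essentially the paper's argument. The paper also computes $h'$ and notes $h'(x)\le 0$ for $x\ge AB$, so the supremum over $[1,\infty)$ reduces to $[1,AB]$, where it bounds $A\ln(1+B\ln x)$ by its value at $AB$ and $-x$ by $-1$; the only difference is that you localize the maximizer via concavity and the critical-point equation, while the paper's sign check of $h'$ beyond $AB$ needs neither concavity nor existence of a critical point.
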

\begin{proof}
We have
\[
h'(x) = \frac{AB}{x(1+B\ln x)}-1.
\]
It is straightforward to check that for $x\geq AB\geq 1$, we have $h'(x)\leq 0$. Therefore
\begin{align*}
\sup_{x\geq 1} h(x)
={} &\sup_{x\in[1,AB]} h(x) \\
\leq{} & \sup_{x\in[1,AB]}A\ln(1+B\ln x)-\inf_{x\in[1,AB]}x \\
\leq{} &
A\ln(1+B\ln (AB))-1.
\qedhere
\end{align*}
\end{proof}

The next lemma bounds the probability of $E_1$.
\begin{lemma}
\label{lemma:probability_E1}
Let $\delta\in(0,1)$ be arbitrary. Suppose $\nu\leq G/(2m)$, and for each $t\in[T]$,
\[
\eta_t=\frac{3}{m(t+t_0)},\qquad
\text{where}\quad
t_0\geq 50d\ln(1+7\ln(700d))
+25d\ln\frac{\pi^2}{6\delta}.
\]
Then $\mathbb{P}(E_1)\geq 1-\delta$.
\end{lemma}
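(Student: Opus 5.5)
The plan is to apply the stitched time-uniform boundary of \cref{theorem:stitched_boundary} to the martingale whose partial sums appear in the definition of $E_1$. For each $t\in[T]$ set
\[
\theta_t \coloneqq -\frac{6}{m}\,q_3(t-1;t_0)\,\langle x_{t-1}-x_\nu^\star,\; g_t-\nabla f_\nu(x_{t-1})\rangle ,
\]
so that $E_1=\bigcap_{t}\{S_t\le \frac{48\Delta^2}{G^2}q_4(t;t_0)\}$ with $S_t=\sum_{k\le t}\theta_k$.

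\textbf{Martingale and sub-Gaussian setup.} Because of the safeguard step in~\cref{eq:main_inner_iteration}, $x_{t-1}\in\mathcal{C}$ always holds. Hence $x_{t-1}$ is $\mathcal{F}_{t-1}$-measurable, $\|x_{t-1}-x_\nu^\star\|\le\sqrt{60}\,\Delta/G$, and $x_{t-1}+\nu\mathbb{B}_d\subseteq\mathcal{S}(12\Delta)$. By \cref{lemma:expectation_grad_est}, $\mathbb{E}[\theta_t\mid\mathcal{F}_{t-1}]=0$.

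By Cauchy--Schwarz, $|\theta_t|\le \frac{6}{m}q_3(t-1;t_0)\frac{\sqrt{60}\Delta}{G}\|g_t-\nabla f_\nu(x_{t-1})\|$. Combining this with \cref{eq:sub-Gaussian_norm_centered_grad} gives $\mathbb{E}[\exp(\theta_t^2/\kappa_t^2)\mid\mathcal{F}_{t-1}]\le 2$, where
\[
\kappa_t=\frac{6\sqrt{480d}\,\Delta}{m}\,q_3(t-1;t_0)
\]
is deterministic. \Cref{lemma:sub-Gaussian_zero_mean_MGF}, applied conditionally, then yields the MGF hypothesis of \cref{theorem:stitched_boundary} with deterministic variance proxies $v_{t-1}=3\kappa_t^2=c_1 d\Delta^2 m^{-2}q_3(t-1;t_0)^2$, for an absolute constant $c_1$. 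In particular $v_0=c_1d\Delta^2m^{-2}q_3(0;t_0)^2$.

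\textbf{Comparing the boundary to the target.} \Cref{theorem:stitched_boundary} gives, with probability at least $1-\delta$, $S_t<u_\delta(V_t)$ for all $t$. It therefore suffices to show $u_\delta(V_t)\le \frac{48\Delta^2}{G^2}q_4(t;t_0)$ for every $t\in[T]$. I will use the following crude polynomial estimates, valid since $t_0$ is large (certainly $t_0\ge 71$):
\begin{itemize}
\item $V_t\le c_1 d\Delta^2m^{-2}(t+t_0)^7/7$;
\item $V_t/v_0\le \bigl((t+t_0)/(t_0-2)\bigr)^7\le c_2(t+t_0)^7/t_0^7$, hence $\ln(V_t/v_0)\le 7\ln\frac{t+t_0}{t_0}+O(1)$;
\item $q_4(t;t_0)\ge c_3(t+t_0)^4$.
\end{itemize}
Squaring the desired inequality and using $G^4/(m^2\Delta^2)\le 1$ (which follows from $m\ge G^2/\Delta$), it reduces to a condition of the form
\[
t+t_0\;\ge\; c_4\, d\left[2\ln\!\left(1+c_5\ln\tfrac{t+t_0}{t_0}\right)+\ln\tfrac{\pi^2}{6\delta}\right]
\]
for all $t\ge 1$, with absolute constants $c_4$ (around $25$) and $c_5$ (around $7$, up to the additive constants absorbed into the $O(1)$ above).

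\textbf{Main obstacle: the iterated logarithm.} The difficulty is the iterated-log term, which grows with $t$; the inequality must hold uniformly in $t$ while $t_0$ is chosen independently of $T$. I will handle this with \cref{lemma:elementary_log_bound}. Write $x=(t+t_0)/t_0\ge 1$ and divide the condition by $t_0$. The $\ln(\pi^2/(6\delta))$ term is absorbed by the hypothesis $t_0\ge 25d\ln\frac{\pi^2}{6\delta}$ together with $x\ge 1$, after splitting $t+t_0$ into two halves. For the remaining piece, I need $2c_4d\ln(1+c_5\ln x)\le t_0 x$ for all $x\ge1$. Apply \cref{lemma:elementary_log_bound} to $y=t_0x/(c\,d)$ with $A\approx 50$ and $B\approx 7$ (in units where the $d$ and $t_0$ scalings are absorbed). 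The lemma bounds $\sup_{y\ge1}\{A\ln(1+B\ln y)-y\}$ by $A\ln(1+B\ln(AB))$, and this is exactly the origin of the term $50d\ln(1+7\ln(700d))$ in the hypothesis on $t_0$.

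The remaining work is careful constant bookkeeping so that $c_4,c_5$ and the $O(1)$ terms fit under $50$, $7$, $700d$ and $25$; this is the part I would expect to need the most care. The conclusion is that $\mathbb{P}(E_1^c)\le\mathbb{P}(\exists t: S_t\ge u_\delta(V_t))\le\delta$.
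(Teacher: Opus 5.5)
Your setup matches the paper's. You take $\theta_t$ as a rescaled martingale difference (the paper divides by $12\sqrt{10d}\Delta$ instead of keeping $6/m$). You get deterministic proxies $v_{t-1}\propto q_3(t-1;t_0)^2$ from $\|x_{t-1}-x_\nu^\star\|^2\le 60\Delta^2/G^2$ and \cref{eq:sub-Gaussian_norm_centered_grad}. Then you apply \cref{theorem:stitched_boundary} and finish with \cref{lemma:elementary_log_bound}.

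\textbf{The gap: your second bullet is false.} It is exactly the estimate you use to control the iterated logarithm.

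\begin{itemize}
\item Since $q_3(k-1;t_0)\ge q_3(0;t_0)$ for every $k\ge1$, you always have $V_t/v_0\ge t$.
\item For $t\le t_0$, however, $\bigl((t+t_0)/(t_0-2)\bigr)^7\le\bigl(2t_0/(t_0-2)\bigr)^7\approx 2^7$.
\item Every admissible $t_0$ exceeds $600$ because $d\ge3$. So the bound already fails at $t=t_0$, where $V_{t_0}/v_0\approx\frac{2^7-1}{7}t_0$.
\end{itemize}

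Consequently $\ln(V_t/v_0)$ carries an extra term of size about $\ln\min\{t,t_0\}$. This is not $O(1)$; near $t=t_0$ it is at least of order $\ln d$. The reduced condition you write in the variable $x=(t+t_0)/t_0$ is therefore not the one that actually has to be checked.

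A symptom of this: applying \cref{lemma:elementary_log_bound} in a normalized variable with an absolute constant $A$ could never produce the $\ln\ln d$ term in the hypothesis on $t_0$. So your explanation of where $50d\ln(1+7\ln(700d))$ comes from does not hold together.

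\textbf{The repair is the paper's.}
\begin{itemize}
\item Bound $V_t\le t\,q_3(t-1;t_0)^2$.
\item Note that $t+t_0-1-i\le t(t_0-i)$ for $i=0,1,2$ and $t\ge1$, since this is $(t-1)(t_0-i-1)\ge0$. Hence $V_t/v_0\le t^7$.
\item Use $q_4(t;t_0)=(t+t_0)\,q_3(t-1;t_0)$ and $t/(t+t_0)^2\le 1/(t+2t_0)$. In the paper's normalization the target then reduces to $t+2t_0\ge 50d\bigl[2\ln(1+7\ln t)+\ln\frac{\pi^2}{6\delta}\bigr]$ for all $t\ge1$.
\item Apply \cref{lemma:elementary_log_bound} in the unnormalized variable $t$ with $A=100d$ and $B=7$. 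It is $AB=700d$, with $A$ proportional to $d$, that produces the $\ln(1+7\ln(700d))$ term.
\end{itemize}

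Your sharper prefactor bound $V_t\le c_1d\Delta^2m^{-2}(t+t_0)^7/7$ is correct and can be kept. Combine it with the log bound $V_t/v_0\le t^7\le(t+t_0)^7$, and the same lemma applied in the variable $t+t_0$ closes the argument with room to spare.
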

\begin{proof}
For each $t\in[T]$, let
\[
\theta_{t} = -\frac{q_3(t-1;t_0)\langle x_{t-1}-x_{\nu}^{\star},g_t-\nabla f_\nu(x_{t-1})\rangle}{12\sqrt{10d}\Delta}.
\]
We then have
\[
\begin{aligned}
& \mathbb{E}\Econd*{\exp\!\left(
\frac{3\theta_{t}^2}{(q_3(t-1;t_0))^2}
\right)}{\mathcal{F}_{t-1}} \\
\leq{} &
\mathbb{E}\Econd*{\exp\!\left(
\frac{\left\| x_{t-1}-x_{\nu}^{\star}\right\|^2\left\|g_t-\nabla f_\nu(x_{t-1})\right\|^2}{480\Delta^2 d}
\right)}{\mathcal{F}_{t-1}} \\
\leq{} &
\mathbb{E}\Econd*{\exp\!\left(
\frac{60\Delta^2/G^2\left\|g_t-\nabla f_\nu(x_{t-1})\right\|^2}{480\Delta^2 d}
\right)}{\mathcal{F}_{t-1}} \\
={} & \mathbb{E}\Econd*{\exp\!\left(
\frac{\left\|g_t-\nabla f_\nu(x_{t-1})\right\|^2}{8G^2 d}
\right)}{\mathcal{F}_{t-1}}\leq 2,
\end{aligned}
\]
where in the second step we used $\left\| x_{t-1}-x_{\nu}^{\star}\right\|^2\leq 60\Delta^2/G^2$ since $x_{t-1}\in\mathcal{C}$, and the last step follows from \cref{lemma:sub-Gaussian_grad_est}. \Cref{lemma:sub-Gaussian_zero_mean_MGF} then implies
\[
\mathbb{E}\Econd*{\exp(\lambda\theta_{t})}{\mathcal{F}_{t-1}}
\leq \exp\!\left(\frac{\lambda^2}{2} (q_3(t-1;t_0))^2\right),
\quad\forall \lambda\in\mathbb{R}.
\]
Denote $V_t=\sum_{k=1}^t (q_3(k-1;t_0))^2$.
We can now apply \cref{theorem:stitched_boundary} and get
\[
\begin{aligned}
& \mathbb{P}\!\left(
\exists t\in[T]\text{ s.t. }
\sum_{k=1}^t\theta_{k}\geq 
u_{\delta,t}
\right)\leq \delta,
\end{aligned}
\]
where
\[
u_{\delta,t} = \frac{e^{1/4}+e^{-1/4}}{\sqrt{2}}
\sqrt{V_t\left[2\ln\!\left(1+\ln\frac{V_t}{(q_3(0;t_0))^2}\right)+
\ln\frac{\pi^2}{6\delta}\right]}.
\]
Then, noting that $\left(\frac{e^{1/4}+e^{-1/4}}{\sqrt{2}}\right)^{\!2}\leq 20/9$ and $V_t\leq t(q_3(t-1;t_0))^2$, we get
\[
\begin{aligned}
u_{\delta,t}^2
\leq{} & 
\frac{20}{9}t(q_3(t-1;t_0))^2
\left[
2\ln\!\left(1+\ln\frac{t(q_3(t-1;t_0))^2}{(q_3(0;t_0))^2}\right)+\ln\frac{\pi^2}{6\delta}
\right].
\end{aligned}
\]
Then, since
\[
\begin{aligned}
\frac{t(q_3(t-1;t_0))^2}{(q_3(0;t_0))^2}
={} & t^7\frac{(1+(t_0-1)/t)^2(1+(t_0-2)/t)^2(1+(t_0-3)/t)^2}{t_0^2(t_0-1)^2(t_0-2)^2}
\leq
t^7
\end{aligned}
\]
we see that
\[
\begin{aligned}
u_{\delta,t}^2
\leq{} & 
\frac{20}{9}t(q_3(t-1;t_0))^2
\left[
2\ln\!\left(1+7\ln t\right)+\ln\frac{\pi^2}{6\delta}
\right] \\
={} & \frac{20(q_4(t;t_0))^2}{9}\cdot\frac{t}{(t+t_0)^2}\left[
2\ln\!\left(1+7\ln t\right)+\ln\frac{\pi^2}{6\delta}
\right] \\
\leq{} &
\frac{20(q_4(t;t_0))^2}{9}\cdot\frac{1}{t+2t_0}\left[
2\ln\!\left(1+7\ln t\right)+\ln\frac{\pi^2}{6\delta}
\right].
\end{aligned}
\]
Now, by the choice of $t_0$ and \cref{lemma:elementary_log_bound}, we see that
\[
2t_0\geq 
100d\ln(1+7\ln t)-t+50d\ln\frac{\pi^2}{6d},\qquad\forall t\geq 1,
\]
which implies
\[
\frac{1}{t+2t_0}\left[
2\ln\!\left(1+7\ln t\right)+\ln\frac{\pi^2}{6\delta}
\right]\leq \frac{1}{50d},
\]
and consequently,
\[
u_{\delta,t}^2\leq \frac{2(q_4(t;t0))^2}{45d}
\leq \frac{2m^2\Delta^2}{45G^4d} (q_4(t;t0))^2.
\]
Now we can conclude that
\[
\begin{aligned}
\mathbb{P}(E_1^c)
={} & \mathbb{P}\!\left(
\exists t\in[T]\text{ s.t. }
\sum_{k=1}^t\theta_k> \frac{2m\Delta}{3G^2\sqrt{10d}}q_4(t;t_0)\right) \\
\leq{} & \mathbb{P}\!\left(
\exists t\in[T]\text{ s.t. }
\sum_{k=1}^t\theta_k> u_{\delta,t}
\right)\leq\delta,
\end{aligned}
\]
which completes the proof.
\end{proof}

We can now summarize the previous results and establish that, by choosing $t_0$ appropriately, we have $\widehat{x}_t=x_t\in\mathcal{C}$ for all $t=1,\ldots,T$ with high probability.

\begin{theorem}
Let $\delta\in(0,1/5)$ be arbitrary. Suppose $\nu\leq G/(2m)$, and for each $t\in[T]$,
\[
\eta_t=\frac{3}{m(t+t_0)},\qquad
\text{where}\quad
t_0\geq
50d\ln(1+7\ln(700d))
+25d\ln\frac{\pi^2}{3\delta}
.
\]
Then $\widehat{x}_t=x_t\in\mathcal{C}$ for all $t=1,\ldots,T$ with probability at least $1-\delta$.
\end{theorem}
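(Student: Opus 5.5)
The plan is to combine \cref{lemma:E1_cap_E2_implies_bounded_x} with a union bound over $E_1^c$ and $E_2^c$, splitting the failure budget $\delta$ into two halves. By \cref{lemma:E1_cap_E2_implies_bounded_x}, on $E_1\cap E_2$ we have $x_t=\widehat{x}_t\in\mathcal{C}$ for all $t\in[T]$. That lemma requires $\nu\le G/(2m)$, which is assumed, and $t_0\ge 71$, which I will check from the prescribed lower bound. It therefore suffices to show $\mathbb{P}(E_1^c)\le\delta/2$ and $\mathbb{P}(E_2^c)\le\delta/2$, since then $\mathbb{P}(E_1\cap E_2)\ge 1-\delta$.

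For $E_1$, I apply \cref{lemma:probability_E1} with $\delta/2$ in place of $\delta$. The required threshold becomes
\[
50d\ln(1+7\ln(700d))+25d\ln\frac{\pi^2}{6(\delta/2)}=50d\ln(1+7\ln(700d))+25d\ln\frac{\pi^2}{3\delta},
\]
which is exactly the hypothesis on $t_0$. Since $\delta/2\in(0,1)$, this gives $\mathbb{P}(E_1)\ge 1-\delta/2$.

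For $E_2$, I apply \cref{lemma:probability_E2} with $\delta/2\in(0,1/10)\subset(0,1/5)$. This requires $t_0\ge 6d\ln(8/\delta)$. I need to verify that this is dominated by the stated bound. Since $\pi^2/3>3$, we have $25d\ln(\pi^2/(3\delta))\ge 25d\ln(3/\delta)$. Also, $8/\delta\le(3/\delta)^{25/6}$, because this is equivalent to $8\cdot 3^{-25/6}\le\delta^{-19/6}$, which holds trivially as $\delta<1$ makes the right side exceed $1$ while the left side is below $1$. Hence $6d\ln(8/\delta)\le 25d\ln(3/\delta)$, and the condition holds.

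For $t_0\ge 71$, I use $d\ge 3$: then $50d\ln(1+7\ln 2100)$ is already far larger than $71$, since $\ln 2100\approx 7.65$ gives $\ln(1+53.5)\approx 4$, so the term is about $600$. Every hypothesis of the three lemmas is then satisfied, and the union bound concludes.

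The only delicate point is bookkeeping. I must make sure the $E_1$ threshold with $\delta/2$ matches the constant $\pi^2/(3\delta)$ in the statement, and that the $E_2$ requirement is absorbed for all $\delta<1/5$ and $d\ge 3$. Both are elementary inequalities, so I expect no real obstacle.
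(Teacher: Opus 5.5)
Your proposal is correct and follows essentially the same route as the paper: allot $\delta/2$ to each of \cref{lemma:probability_E1} and \cref{lemma:probability_E2}, check $t_0\geq 71$ and the $E_2$ threshold using $d\geq 3$, and conclude via \cref{lemma:E1_cap_E2_implies_bounded_x}. The only difference is cosmetic: the paper asserts in one line the slightly stronger (and unneeded) condition $t_0\geq 6d\ln(10/\delta)$, whereas you verify the exact requirement $t_0\geq 6d\ln(8/\delta)$ explicitly.
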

\begin{proof}
Note that when $d\geq 3$ and $\delta\in(0,1/5)$, we have
\[
50d\ln(1+7\ln(700d))
+25d\ln\frac{\pi^2}{3\delta}
\geq\max\left\{71,6d\ln\frac{10}{\delta}\right\}.
\]
Therefore we can apply \cref{lemma:probability_E2,lemma:probability_E1} to get
\[
\mathbb{P}(E_1\cap E_2)\geq 1-\left(\frac{\delta}{2}+\frac{\delta}{2}\right) = 1-\delta.
\]
Then, \cref{lemma:E1_cap_E2_implies_bounded_x} shows that, on $E_1\cap E_2$ we have $x_t=\widehat{x}_t\in\mathcal{C}$.
\end{proof}

\subsection{Convergence to the Optimal Point}

Now we proceed to the analysis of the convergence rate.

\begin{lemma}
\label{lemma:convergence_raw_gap_bound}
Suppose $\nu\leq G/(2m)$ and $\eta_t=\frac{3}{m(t+t_0)}$ for each $t\in[T]$ for some $t_0\geq 143$. Let
\[
\overline{x}_T=\sum_{t=1}^T \frac{2(t+t_0-1)}{T(T+2t_0-1)}x_{t-1}.
\]
Then, as long as $x_t\in\mathcal{C}$ for all $t\in[T]$, we have
\begin{equation}
\label{eq:convergence_raw_gap_bound}
\begin{aligned}
F_{\nu,x_0}(\overline{x}_T)-F_{\nu,x_0}(x^\star_\nu)
\leq{} & \frac{2}{T(T+2t_0-1)}\sum_{t=1}^T(t+t_0-1)\langle x_{t-1}-x_\nu^\star, \nabla f_\nu(x_{t-1})-g_t\rangle \\
& + \frac{6}{mT(T+2t_0-1)}\sum_{t=1}^T\|g_t\|^2
+ \frac{m(48T+t_0(t_0-1)/3)}{T(T+2t_0-1)}\|x_0-x_\nu^\star\|^2.
\end{aligned}
\end{equation}
\end{lemma}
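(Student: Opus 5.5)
We will use the standard one-step analysis of stochastic gradient descent for strongly convex functions, weighted by $w_t = t+t_0-1$, and then apply Jensen's inequality to the weighted average. Since $x_t\in\mathcal{C}$ for all $t$, we have $\widehat{x}_t=x_t$ throughout. By Part 2 of \cref{lemma:bigF_smoothed_ver_comparison}, $F_{\nu,x_0}$ is $m$-strongly convex on $\mathcal{C}$, and $x_\nu^\star$ lies in the interior of $\mathcal{C}$.

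\textbf{One-step inequality.} Expand $\|x_t-x_\nu^\star\|^2$ exactly as in the proof of \cref{eq:inner_loop_dist_x_nu_star_bound}, but keep the function-value gap instead of discarding it. Strong convexity \cref{eq:inner_loop_strong_convexity_1} gives
\[
2\eta_t\bigl(F_{\nu,x_0}(x_{t-1})-F_{\nu,x_0}(x_\nu^\star)\bigr)
\leq (1-\eta_t m)\|x_{t-1}-x_\nu^\star\|^2-\|x_t-x_\nu^\star\|^2+\eta_t^2\|g_t+2m(x_{t-1}-x_0)\|^2+2\eta_t\langle x_{t-1}-x_\nu^\star,\nabla f_\nu(x_{t-1})-g_t\rangle .
\]
Bound the squared-norm term as before by $2\|g_t\|^2+16m^2\|x_0-x_\nu^\star\|^2+16m^2\|x_{t-1}-x_\nu^\star\|^2$. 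The $16m^2\eta_t^2\|x_{t-1}-x_\nu^\star\|^2$ piece is absorbed by weakening $1-\eta_t m$ to a slightly smaller contraction. This uses $\eta_t m=3/(t+t_0)\le 3/144$ for $t_0\geq 143$, which makes $16\eta_t^2m^2$ at most a small fraction of $\eta_t m$.

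\textbf{Weighting and telescoping.} Divide by $2\eta_t=6/(m(t+t_0))$ and multiply by $w_t=t+t_0-1$. The distance terms then become
\[
\tfrac{m}{6}\bigl[(t+t_0-1)(t+t_0-c)\|x_{t-1}-x_\nu^\star\|^2-(t+t_0)(t+t_0-1)\|x_t-x_\nu^\star\|^2\bigr]
\]
for some $c\geq 1$ coming from the contraction factor. It suffices that $(t+t_0-1)(t+t_0-c)\leq (t+t_0-1)(t+t_0-2)$, so that the terms telescope. Summing over $t\in[T]$ leaves only the initial term, roughly $\frac{m}{6}t_0(t_0-1)\|x_0-x_\nu^\star\|^2$, which matches the $t_0(t_0-1)/3$ coefficient up to constants.

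The remaining terms are as follows. The gradient-noise term gives $\frac{\eta_t w_t}{2}\cdot 2\|g_t\|^2\le \frac{3}{m}\|g_t\|^2$. The $x_0$-term gives $\frac{\eta_t w_t}{2}\cdot16m^2\|x_0-x_\nu^\star\|^2\le 24m\|x_0-x_\nu^\star\|^2$ per step, i.e.\ $48mT$ after normalization. The martingale term keeps coefficient $w_t$.

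\textbf{Conclusion.} Divide everything by $\sum_t w_t=T(T+2t_0-1)/2$. Jensen's inequality for the convex $F_{\nu,x_0}$ on the convex set $\mathcal{C}\ni x_{t-1}$ bounds $F_{\nu,x_0}(\overline{x}_T)$ by the weighted average of the $F_{\nu,x_0}(x_{t-1})$, and this yields \cref{eq:convergence_raw_gap_bound}. The main obstacle is the constant bookkeeping in the contraction step. The absorption of the $16\eta_t^2m^2$ term must leave enough contraction, in the sense that the factor multiplied by $w_t/\eta_t$ stays $\leq (t+t_0-1)(t+t_0-2)$. This is where the condition $t_0\geq143$ enters. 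If the exact constants fail, I would instead bound $\|x_{t-1}-x_0\|$ directly via $\mathcal{C}\subseteq x_0+\frac{10\Delta}{G}\mathbb{B}_d$, although that changes the form of the bound.
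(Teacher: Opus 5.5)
Your proposal is correct and follows the paper's proof essentially line by line: the same strong-convexity one-step inequality, the same splitting of $\|g_t+2m(x_{t-1}-x_0)\|^2$, weighting by $t+t_0-1$, telescoping, and Jensen on the weighted average. The bookkeeping you flagged as the main obstacle closes exactly, so your fallback is not needed. Since $\eta_t m\le 3/144=1/48$, you get $1-\eta_t m+16\eta_t^2m^2\le 1-\tfrac{2}{3}\eta_t m$, and after dividing by $2\eta_t$ the coefficient is $\tfrac{m}{6}(t+t_0-2)$. That is $c=2$, so the telescoping is exact, and the leftover $\tfrac{m}{6}t_0(t_0-1)\|x_0-x_\nu^\star\|^2$ gives precisely the $t_0(t_0-1)/3$ term after normalization.
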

\begin{proof}
Strong convexity of $F_{\nu,x_0}$ over $\mathcal{C}$ gives
\begin{equation}
\label{eq:convergence_raw_gap_bound_temp}
\begin{aligned}
F_{\nu,x_0}(x_{t-1})-F_{\nu,x_0}(x_\nu^\star)
\leq{} & \langle x_{t-1}-x_\nu^\star, \nabla f_\nu(x_{t-1})+2m(x_{t-1}-x_0)\rangle - \frac{m}{2}\|x_{t-1}-x_\nu^\star\|^2 \\
={} & \langle x_{t-1}-x_\nu^\star, g_{t}+2m(x_{t-1}-x_0)\rangle - \frac{m}{2}\|x_{t-1}-x_\nu^\star\|^2 \\
& + \langle x_{t-1}-x_\nu^\star, \nabla f_\nu(x_{t-1})-g_{t}\rangle.
\end{aligned}
\end{equation}
Since $\widehat{x}_t=x_t$ for all $t\in[T]$, the first term on the right-hand side can be written as
\[
\begin{aligned}
& \langle x_{t-1}-x_\nu^\star, g_{t}+2m(x_{t-1}-x_0)\rangle \\
={} &
\frac{1}{\eta_t}\langle x_{t-1}-x_\nu^\star,x_t-x_{t-1}\rangle \\
={} & \frac{1}{2\eta_t}\left(
\|x_{t-1}-x_t\|^2 + \|x_{t-1}-x_\nu^\star\|^2
-\|x_{t}-x_\nu^\star\|^2
\right) \\
={} & \frac{1}{2\eta_t}\left(
\|x_{t-1}-x_\nu^\star\|^2
-\|x_{t}-x_\nu^\star\|^2
\right)
+\frac{\eta_t}{2}\|g_t+2m(x_{t-1}-x_0)\|^2.
\end{aligned}
\]
Then, noting that
\[
\begin{aligned}
\|g_t+2m(x_{t-1}-x_0)\|^2
\leq{} & 2\|g_t\|^2+8m^2\|x_{t-1}-x_\nu^\star-(x_0-x_\nu^\star)\|^2 \\
\leq{} & 2\|g_t\|^2 + 16m^2 \|x_{t-1}-x_\nu^\star\|^2
+16m^2\|x_0-x_\nu^\star\|^2,
\end{aligned}
\]
we get
\[
\begin{aligned}
& \langle x_{t-1}-x_\nu^\star, g_{t}+2m(x_{t-1}-x_0)\rangle-\frac{m}{2}\|x_{t-1}-x_\nu^\star\|^2 \\
\leq{} & 
\frac{1}{2\eta_t}\left(
\|x_{t-1}-x_\nu^\star\|^2
-\|x_{t}-x_\nu^\star\|^2
\right) + \eta_t \|g_t\|^2 \\
&
-m\left(\frac{1}{2}-8\eta_t m\right)\|x_{t-1}-x_\nu^\star\|^2
+8\eta_tm^2\|x_0-x_\nu^\star\|^2 \\
\leq{} & 
\frac{1}{2\eta_t}\left(
\|x_{t-1}-x_\nu^\star\|^2
-\|x_{t}-x_\nu^\star\|^2
\right) + \eta_t \|g_t\|^2
-\frac{m}{3}\|x_{t-1}-x_\nu^\star\|^2
+8\eta_t m^2 \|x_0-x_\nu^\star\|^2,
\end{aligned}
\]
where we used $\eta_t m\leq 1/48$ that follows from $t_0\geq 143$. Plugging in $\eta_t=3/(m(t+t_0))$, we get
\[
\begin{aligned}
& \langle x_{t-1}-x_\nu^\star, g_{t}+2m(x_{t-1}-x_0)\rangle-\frac{m}{2}\|x_{t-1}-x_\nu^\star\|^2 \\
\leq{} & \frac{m}{6}\left[
(t+t_0-2)\|x_{t-1}-x_\nu^\star\|^2
-(t+t_0)\|x_{t}-x_\nu^\star\|^2
\right]
+\frac{3}{m(t+t_0)}\|g_t\|^2
+\frac{24m}{t+t_0}\|x_0-x_\nu^\star\|^2
\end{aligned}
\]
Plugging it back into~\cref{eq:convergence_raw_gap_bound_temp} and multiplying both sides by $t+t_0-1$, we get
\[
\begin{aligned}
& (t+t_0-1)\left(F_{\nu,x_0}(x_{t-1})-F_{\nu,x_0}(x_\nu^\star)\right) \\
\leq{} & \frac{m}{6}\left[(t+t_0-2)(t+t_0-1)\|x_{t-1}-x_\nu^\star\|^2-(t+t_0-1)(t+t_0)\|x_t-x_\nu^\star\|^2\right] \\
& + \frac{3}{m}\|g_t\|^2+24m\|x_0-x_\nu^\star\|^2
+(t+t_0-1)\langle x_{t-1}-x_\nu^\star, \nabla f_\nu(x_{t-1})-g_{t}\rangle.
\end{aligned}
\]
Taking the telescoping sum from $t=1$ to $t=T$ and discarding the negative of squares leads to
\[
\begin{aligned}
\sum_{t=1}^T (t \!+\! t_0 \!-\! 1)\!\left(F_{\nu,x_0}(x_{t-1}) - F_{\nu,x_0}(x_\nu^\star)\right)
\leq{} & \sum_{t=1}^T(t \!+\! t_0 \!-\! 1) \langle x_{t-1}-x_\nu^\star, \nabla f_\nu(x_{t-1})-g_{t}\rangle \\
& + \frac{3}{m}\sum_{t=1}^T \|g_t\|^2
+ \left[24T+\frac{1}{6}t_0(t_0-1)\right]m\|x_0-x_\nu^\star\|^2.
\end{aligned}
\]
Finally, by dividing both sides by $\sum_{t=1}^T(t+t_0-1)=T(T+2t_0-1)/2$, and noting that Jensen's inequality gives
\[
F_{\nu,x_0}(\overline{x}_T)\leq \frac{2}{T(T+2t_0-1)}\sum_{t=1}^T (t+t_0-1)F_{\nu,x_0}(x_{t-1}),
\]
we obtain the desired bound.
\end{proof}

With \cref{lemma:convergence_raw_gap_bound} at hand, the next steps would be to bound each term on the right-hand side of~\cref{eq:convergence_raw_gap_bound} with high probability. We shall see that bounding $\sum_{t=1}^T \|g_t\|^2$ with high probability follows a standard approach. Denote
\begin{equation}
E_3(\delta) \coloneq \left\{\sum_{t=1}^T\|g_t\|^2\leq 4G^2d\left(T\ln 2+\ln\frac{1}{\delta}\right)\right\}.
\end{equation}

\begin{lemma}
$\mathbb{P}(E_3(\delta))\geq 1-\delta$ for any $\delta\in(0,1)$.
\end{lemma}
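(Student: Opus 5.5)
The plan is the standard Chernoff/Markov argument on the moment generating function of $\sum_t\|g_t\|^2/(4G^2d)$, built on the conditional sub-Gaussian bound~\cref{eq:sub-gaussiaon_grad_est_norm} of \cref{lemma:sub-Gaussian_grad_est}.

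\textbf{Step 1: conditional bound.} Because of the safeguard step in~\cref{eq:main_inner_iteration}, every $x_{t-1}$ lies in $\mathcal{C}$. We have already shown that $\mathcal{C}+\nu\mathbb{B}_d\subseteq\mathcal{S}(12\Delta)$. So \cref{lemma:sub-Gaussian_grad_est} applies at $x=x_{t-1}$ with $G_\Delta$ replaced by $G$. Since $x_{t-1}$ is $\mathcal{F}_{t-1}$-measurable and $z_t$ is independent of $\mathcal{F}_{t-1}$, this gives
\[
\mathbb{E}\Econd*{\exp\!\left(\frac{\|g_t\|^2}{4G^2d}\right)}{\mathcal{F}_{t-1}}\leq 2 \qquad\text{a.s.}
\]
This holds whether or not the iterates have left $\mathcal{C}$, so no conditioning on the success event is needed.

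\textbf{Step 2: the supermartingale.} Define $L_t=\exp\bigl(\sum_{k=1}^t\|g_k\|^2/(4G^2d)\bigr)\,2^{-t}$ with $L_0=1$. By Step 1, $\mathbb{E}[L_t\mid\mathcal{F}_{t-1}]\leq L_{t-1}$, and the tower property gives $\mathbb{E}[L_T]\leq 1$, i.e.
\[
\mathbb{E}\!\left[\exp\!\left(\sum_{t=1}^T\frac{\|g_t\|^2}{4G^2d}\right)\right]\leq 2^T.
\]

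\textbf{Step 3: Markov's inequality.} Applying Markov to this exponential,
\[
\mathbb{P}\!\left(\sum_{t=1}^T\|g_t\|^2>4G^2d\left(T\ln 2+\ln\tfrac1\delta\right)\right)\leq 2^T\exp\!\left(-T\ln 2-\ln\tfrac1\delta\right)=\delta.
\]
This is exactly $\mathbb{P}(E_3(\delta)^c)\leq\delta$. Alternatively, one could invoke Ville's inequality (\cref{lemma:Ville_inequality}) on $L_t$, which would even give a time-uniform version, but the fixed-$T$ bound is all that is required.

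The only point needing care is Step 1. The conditional exponential-moment bound must hold almost surely for every $t$, which requires that $x_{t-1}$ always stays in the region where \cref{lemma:sub-Gaussian_grad_est} applies. The safeguard construction $x_t\in\mathcal{C}$ guarantees this deterministically. The remaining steps are routine.
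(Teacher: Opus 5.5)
Your proof is correct and is essentially the paper's argument: the same nonnegative supermartingale $L_t=\exp\bigl(\sum_{k\le t}\|g_k\|^2/(4G^2d)-t\ln 2\bigr)$ built from the conditional bound \cref{eq:sub-gaussiaon_grad_est_norm}, with the safeguard keeping every $x_{t-1}$ in $\mathcal{C}$. The only difference is that you apply Markov's inequality at the fixed time $T$, while the paper invokes Ville's inequality, which gives a time-uniform bound that $E_3(\delta)$ does not actually require.
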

\begin{proof}
For each $t\in[T]$, let $L_0=1$ and
\[
L_t = \exp\left(\frac{\sum_{k=1}^t\|g_t\|^2}{4G^2d}-t\ln 2\right).
\]
Then by \cref{lemma:sub-Gaussian_grad_est}, we have
\[
\mathbb{E}\Econd*{\frac{L_t}{L_{t-1}}}{\mathcal{F}_{t-1}}
=\mathbb{E}\Econd*{\exp\!\left(\frac{\|g_t\|^2}{4G^2d}-
\ln 2\right)}{\mathcal{F}_{t-1}}\leq 1,
\qquad\forall t\in[T],
\]
showing that $(L_t)_{t=0}^T$ is a non-negative supermartingale. By Ville's inequality (\cref{lemma:Ville_inequality}), we get
\[
\begin{aligned}
\mathbb{P}(E_3(\delta)^c)
\leq{} & \mathbb{P}\!\left(
\exists t\in[T]\text{ s.t. }
\sum_{k=1}^t \|g_k\|^2
> 4G^2d\left(t\ln 2+\ln\frac{1}{\delta}\right)
\right) \\
={} &
\mathbb{P}\!\left(
\exists t\in[T]\text{ s.t. }
L_t>\delta^{-1}
\right)
\leq\delta.
\end{aligned}
\]
The proof is now complete.
\end{proof}

Bounding the sum of $(t+t_0-1)\langle x_{t-1}-x_\nu^\star,\nabla f_{\nu}(x_{t-1})-g_t\rangle$, however, would require some extra work to get the tightest rate, as argued in \cite{harvey2019tight,harvey2019simple}. We shall first make some preparations. Denote
\begin{align*}
\mathscr{V}_t \coloneq{} &
\sum_{k=0}^{t-1} (k+t_0)^2\|x_{k}-x_\nu^\star\|^2.
\end{align*}

\begin{lemma}
\label{lemma:var_process_bound_convergence}
Suppose $t_0\geq 143$. Then on $E_1\cap E_2$, we have
\[
\begin{aligned}
\mathscr{V}_T
\leq{} &
\left(t_0^3+75T^2\right)\|x_0-x_\nu^\star\|^2
+\frac{20T}{m^2}\sum_{t=1}^{T}\|g_t\|^2 \\
& + \sum_{t=1}^{T}\mathfrak{a}_t\cdot\frac{(t+t_0-1)
\langle x_{t-1}-x_\nu^\star,\nabla f_\nu(x_{t-1})-g_t\rangle}{\sqrt{24d}G},
\end{aligned}
\]
where each $\mathfrak{a}_t$ is a non-negative real number satisfying $\mathfrak{a}_t\leq 30G\sqrt{d}(T-t)/m$.
\end{lemma}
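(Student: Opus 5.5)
The plan is to start from the unrolled distance recursion \cref{eq:inner_loop_dist_x_nu_star_bound2}, apply it for every $k$ in the range of $\mathscr{V}_T$, and then exchange the order of summation. On $E_1\cap E_2$, \cref{lemma:E1_cap_E2_implies_bounded_x} gives $x_k=\widehat{x}_k\in\mathcal{C}$ for all $k\in[T]$. Hence the hypothesis of \cref{eq:inner_loop_dist_x_nu_star_bound2} holds for every $k\in[T-1]$, and $t_0\geq 143\geq 71$. For each $k\geq 1$ I divide \cref{eq:inner_loop_dist_x_nu_star_bound2} (with $t=k$) by $q_4(k;t_0)$, multiply by $(k+t_0)^2$, and sum over $k=1,\ldots,T-1$. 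The $k=0$ term $t_0^2\|x_0-x_\nu^\star\|^2$ of $\mathscr{V}_T$ is kept separately; it is absorbed into the $t_0^3$ coefficient.

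After swapping sums there are three groups of terms, each to be bounded by an elementary estimate on ratios of the polynomials $q_i$.

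\textbf{Initial-distance term.} Its coefficient is
\[
\sum_{k=1}^{T-1}(k+t_0)^2\,\frac{q_4(0;t_0)+48k(k+t_0-1)(k+3t_0-3)}{q_4(k;t_0)}.
\]
I will use $q_4(0;t_0)\leq t_0^4$ and $q_4(k;t_0)\geq c\,(k+t_0)^4$. The latter holds with $c$ close to $1$ because $t_0\geq 143$ gives $(k+t_0-3)/(k+t_0)\geq 140/143$. Then the first part is at most about $t_0^4\sum_k (k+t_0)^{-2}\leq t_0^3$ up to a constant near $1$. The second part is at most about $48\sum_k 3k(k+t_0)^3/(k+t_0)^4\cdot(1+o(1))$, which is $O(T^2)$. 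Adding $t_0^2$, I expect to land on $(t_0^3+75T^2)\|x_0-x_\nu^\star\|^2$. The constants may need slightly careful tracking; for instance, one can use $k+3t_0-3\leq 3(k+t_0)$ and $\sum_k k/(k+t_0)\leq T$.

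\textbf{Gradient-norm term.} The coefficient of $\|g_t\|^2$ is
\[
\frac{18}{m^2}\,q_2(t-2;t_0)\sum_{k=t}^{T-1}\frac{(k+t_0)^2}{q_4(k;t_0)}.
\]
Since $q_2(t-2;t_0)\leq (t+t_0)^2$ and the summand is at most about $(k+t_0)^{-2}$, this is at most about $\frac{18}{m^2}(t+t_0)^2\cdot\frac{1}{t+t_0-1}$. That bound is not directly $\leq 20T/m^2$ when $t_0\gg T$. The better way is to bound the sum by its number of terms times its largest term: this gives $(T-t)\,(t+t_0)^2/q_4(t;t_0)$, and then $q_2(t-2)(t+t_0)^2/q_4(t)\leq 1+O(1/t_0)$. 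So the coefficient is $\leq \frac{18}{m^2}(1+O(1/t_0))T\leq 20T/m^2$.

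\textbf{Martingale term.} The coefficient of $\langle x_{t-1}-x_\nu^\star,\nabla f_\nu(x_{t-1})-g_t\rangle$ is
\[
\frac{6}{m}\,q_3(t-1;t_0)\sum_{k=t}^{T-1}\frac{(k+t_0)^2}{q_4(k;t_0)}
\leq\frac{6}{m}(1+O(1/t_0))\,(T-t)\,(t+t_0-1),
\]
using the same number-of-terms-times-largest-term bound together with $q_3(t-1)(t+t_0)^2/q_4(t)\leq (t+t_0-1)(1+O(1/t_0))$. The minus sign in \cref{eq:inner_loop_dist_x_nu_star_bound2} turns $g_t-\nabla f_\nu$ into $\nabla f_\nu-g_t$. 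Factoring out $(t+t_0-1)/(\sqrt{24d}\,G)$ leaves
\[
\mathfrak{a}_t=\sqrt{24d}\,G\cdot\frac{6}{m}(1+O(1/t_0))(T-t)\leq 30G\sqrt{d}(T-t)/m,
\]
because $6\sqrt{24}\approx 29.4$ and the $O(1/t_0)$ slack keeps the product below $30$. Crucially, $\mathfrak{a}_t$ is a deterministic nonnegative number: it is the exact coefficient produced by the deterministic step sizes.

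\textbf{Main obstacle.} I expect the hardest step to be the constant bookkeeping in the ratios $q_i(\cdot)(k+t_0)^2/q_4(k)$. The martingale coefficient must come out below $30$ even though $6\sqrt{24}$ already exceeds $29$, so the $(1+O(1/t_0))$ factors must be controlled explicitly using $t_0\geq 143$. I would first try writing each ratio as $\prod_j (1-j/(k+t_0))^{-1}$ and bounding it by $(1-3/143)^{-3}$. If that is too loose, I would instead keep the exact coefficient and define $\mathfrak{a}_t$ from it directly.
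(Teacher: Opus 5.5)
Your plan is the paper's proof: apply \cref{eq:inner_loop_dist_x_nu_star_bound2} at each index $k\in[T-1]$ (valid on $E_1\cap E_2$ by \cref{lemma:E1_cap_E2_implies_bounded_x}), weight by $(k+t_0)^2/q_4(k;t_0)$, and swap the sums. You also keep $\mathfrak{a}_t$ as the exact deterministic coefficient and bound each inner sum by the number of terms times a bound on its largest term; this gives the factor $\frac{t_0}{t_0-1}$ behind $18\cdot\frac{10}{9}=20$ and $6\sqrt{24}\cdot\frac{143}{142}<30$.

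The only place your sketched constants do not close as written is the $\|x_0-x_\nu^\star\|^2$ coefficient. The crude bound $q_4(k;t_0)\geq c(k+t_0)^4$ with $c<1$ gives about $t_0^3/c+t_0^2>t_0^3$. Also, the second summand is $48k\frac{(k+t_0)(k+3t_0-3)}{(k+t_0-2)(k+t_0-3)}$, of order $k$ rather than $k/(k+t_0)$.

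The paper fixes the first issue with $\frac{k+t_0}{k+t_0-3}\leq\frac{t_0}{t_0-3}$ and the telescoping identity $\sum_{k\geq 0}\frac{1}{(k+t_0-1)(k+t_0-2)}=\frac{1}{t_0-2}$. This gives exactly $t_0^2(t_0-1)\leq t_0^3$, with the $k=0$ term included. For the second, it uses monotonicity in $k$ to get $48\cdot\frac{(3t_0-2)(t_0+1)}{(t_0-1)(t_0-2)}\cdot\frac{T(T-1)}{2}\leq 75T^2$.
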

\begin{proof}
By \cref{lemma:E1_cap_E2_implies_bounded_x}, on $E_1\cap E_2$ we have $x_t=\widehat{x}_t\in\mathcal{C}$ for all $t\in[T]$. Thus we can apply~\cref{eq:inner_loop_dist_x_nu_star_bound2} and get
\[
\begin{aligned}
\|\widehat{x}_t-x_{\nu}^{\star}\|^2
\leq{} & \frac{q_4(0;t_0)+48t(t\!+\!t_0\!-\!1)(t\!+\!3t_0\!-\!3)}{q_4(t;t_0)}\|x_0-x_{\nu}^{\star}\|^2 \\
&
+\frac{18}{m^2}\sum_{k=1}^t \frac{q_2(k-2;t_0)}{q_4(t;t_0)}\|g_k\|^2 \\
& -\frac{6}{m}\sum_{k=1}^t \frac{q_3(k-1;t_0)}{q_4(t;t_0)}\langle x_{k-1}-x_{\nu}^{\star},g_k-\nabla f_{\nu}(x_{k-1})\rangle.
\end{aligned}
\]
Consequently,
\begin{equation}
\label{eq:sum_t2_sq_dist_bound_temp}
\begin{aligned}
\mathscr{V}_T \leq{} &
\|x_0-x_\nu^\star\|^2
\left(\sum_{t=0}^{T-1}\frac{[(q_4(0;t_0)+48t(t\!+\!t_0\!-\!1)(t\!+\!3t_0\!-\!3)](t+t_0)^2}{q_4(t;t_0)}\right) \\
&
+ \frac{18}{m^2}\sum_{t=1}^{T-1}
\frac{(t+t_0)^2}{q_4(t;t_0)}\sum_{k=1}^t q_2(k-2;t_0) \|g_k\|^2 \\
& + \frac{6}{m}\sum_{t=1}^{T-1}\frac{(t+t_0)^2}{q_4(t;t_0)}\sum_{k=1}^t
q_3(k-1;t_0)
\langle x_{k-1}-x_\nu^\star,\nabla f_\nu(x_{k-1})-g_k\rangle.
\end{aligned}
\end{equation}
We next treat each term on the right-hand side. To bound the first term, we have
\[
\begin{aligned}
& \sum_{t=0}^{T-1}\frac{[(q_4(0;t_0)+48t(t\!+\!t_0\!-\!1)(t\!+\!3t_0\!-\!3)](t+t_0)^2}{q_4(t;t_0)} \\
={} & 
q_4(0;t_0)\sum_{t=0}^{T-1}\frac{t+t_0}{(t\!+\!t_0\!-\!1)(t\!+\!t_0\!-\!2)(t\!+\!t_0\!-\!3)}
+48\sum_{t=1}^{T-1}\frac{t(t\!+\!3t_0\!-\!3)(t\!+\!t_0)}{(t\!+\!t_0\!-\!2)(t\!+\!t_0\!-\!3)} \\
\leq{} & q_4(0;t_0)\frac{t_0}{t_0-3}\sum_{t=0}^{T-1}\frac{1}{(t+t_0-1)(t+t_0-2)}
+48\cdot \frac{(3t_0-2)(t_0+1)}{(t_0-1)(t_0-2)}\sum_{t=1}^{T-1}t \\
\leq{} & t_0^3+75T^2,
\end{aligned}
\]
where we used the fact that $t\mapsto (t\!+\!3t_0\!-\!3)(t\!+\!t_0)/[(t\!+\!t_0\!-\!2)(t\!+\!t_0\!-\!3)]$ is  decreasing over $t\geq 1$ in the first inequality. For the second term of~\cref{eq:sum_t2_sq_dist_bound_temp}, we have
\[
\begin{aligned}
\sum_{t=1}^{T-1}
\frac{(t+t_0)^2}{q_4(t;t_0)}
\sum_{k=1}^tq_2(k-2;t_0) \|g_k\|^2
={} & 
\sum_{k=1}^{T-1}q_2(k-2;t_0)\|g_k\|^2
\sum_{t=k}^{T-1}\frac{(t\!+\!t_0)}{(t\!+\!t_0\!-\!1)(t\!+\!t_0\!-\!2)(t\!+\!t_0\!-\!3)} \\
\leq{} & \sum_{k=1}^{T-1}
q_2(k-2;t_0)\|g_k\|^2\sum_{t=k}^{T-1}\frac{t_0}{(t_0\!-\!1)(k\!+\!t_0\!-\!2)(k\!+\!t_0\!-\!3)} \\
\leq{} & \frac{10}{9}\sum_{k=1}^{T-1}(T-k)\|g_k\|^2
\leq\frac{10T}{9}\sum_{k=1}^{T}\|g_k\|^2.
\end{aligned}
\]
For the last term, we have
\[
\begin{aligned}
& \frac{6}{m}\sum_{t=1}^{T-1}\frac{(t+t_0)^2}{q_4(t;t_0)}\sum_{k=1}^t
q_3(k-1;t_0)
\langle x_{k-1}-x_\nu^\star,\nabla f_\nu(x_{k-1})-g_k\rangle \\
={} &
\frac{6}{m}\sum_{k=1}^{T-1} 
\langle x_{k-1}-x_\nu^\star,\nabla f_\nu(x_{k-1})-g_k\rangle
\cdot q_3(k-1;t_0)\sum_{t=k}^{T-1}\frac{t+t_0}{(t+t_0-1)(t+t_0-2)(t+t_0-3)} \\
={} & \sum_{k=1}^{T-1}\frac{\langle x_{k-1}-x_\nu^\star,\nabla f_\nu(x_{k-1})-g_k\rangle (k+t_0-1)}{\sqrt{24d}G}\cdot \mathfrak{a}_k
\end{aligned}
\]
and we can bound $\mathfrak{a}_k$ by
\[
\begin{aligned}
\mathfrak{a}_k ={} &
\frac{6\sqrt{24d}G}{m}q_2(k-2;t_0)\sum_{t=k}^{T-1}\frac{t+t_0}{(t+t_0-1)(t+t_0-2)(t+t_0-3)}
\\
\leq{} & 
\frac{6\sqrt{24d}G}{m}q_2(k-2;t_0)\sum_{t=k}^{T-1}\frac{t_0}{(t_0-1)(k+t_0-2)(k+t_0-3)}
\leq \frac{30G\sqrt{d}}{m}(T-k).
\end{aligned}
\]
We further let $\mathfrak{a}_T=0$. Summarizing these results, we get the desired bound.
\end{proof}

Now we denote
\begin{align*}
\vartheta_t \coloneq{} &
\frac{1}{\sqrt{24d}G}(t+t_0-1)\langle x_{t-1}-x_\nu^\star,\nabla f_\nu(x_{t-1})-g_t\rangle, \\
\mathfrak{b}_T(\delta) \coloneq{} &
\left(t_0^3+75T^2\right)\|x_0-x_\nu^\star\|^2
+\frac{80TG^2d}{m^2}
\left(T\ln 2+\ln\frac{1}{\delta}\right), \\
\Gamma_T(\delta)\coloneq{} &
\max\!\left\{
\frac{240G\sqrt{d}(T-1)}{m}\ln\frac{1}{\delta},
\sqrt{16\mathfrak{b}_T(\delta)\ln\frac{1}{\delta}}
\right\},
\end{align*}
and
\[
E_4(\delta)\coloneq
\left\{
\sum_{t=1}^T\vartheta_t \leq \Gamma_T
\text{ or }
\mathscr{V}_T
> \sum_{t=1}^{T}\mathfrak{a}_t\vartheta_t + \mathfrak{b}_T(\delta)
\right\},
\]
where $\mathfrak{a}_1,\ldots,\mathfrak{a}_{T-1}$ are given by \cref{lemma:var_process_bound_convergence}. The next lemma bounds the probability of $E_4(\delta)$.
\begin{lemma}
$\mathbb{P}(E_4(\delta))\geq 1-\delta$ for any $\delta\in(0,1)$.
\end{lemma}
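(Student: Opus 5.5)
The plan is to apply the generalized Freedman inequality (\cref{theorem:generalized_freedman}) to the martingale difference sequence $(\vartheta_t)_{t=1}^T$. First, each $\vartheta_t$ is conditionally zero-mean. The reason is that $x_{t-1}$ is $\mathcal{F}_{t-1}$-measurable. Moreover, $x_{t-1}\in\mathcal{C}\subseteq\mathcal{D}_\nu$ always holds because of the safeguard step in \cref{eq:main_inner_iteration}. So \cref{lemma:expectation_grad_est} gives $\mathbb{E}[g_t\,|\,\mathcal{F}_{t-1}]=\nabla f_\nu(x_{t-1})$.

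For the conditional MGF bound, we argue as in the proof of \cref{lemma:probability_E1}. By Cauchy--Schwarz, $\vartheta_t^2\le \frac{(t+t_0-1)^2\|x_{t-1}-x_\nu^\star\|^2}{24dG^2}\|g_t-\nabla f_\nu(x_{t-1})\|^2$. Together with \cref{eq:sub-Gaussian_norm_centered_grad}, this gives
\[
\mathbb{E}\bigl[\exp\bigl(\vartheta_t^2/\kappa_{t-1}^2\bigr)\,\big|\,\mathcal{F}_{t-1}\bigr]\le 2,
\qquad
\kappa_{t-1}^2=\tfrac{1}{3}(t+t_0-1)^2\|x_{t-1}-x_\nu^\star\|^2 .
\]
\Cref{lemma:sub-Gaussian_zero_mean_MGF} then yields the bound $\exp(\frac{\lambda^2}{2}v_{t-1})$ with $v_{t-1}=3\kappa_{t-1}^2=(t+t_0-1)^2\|x_{t-1}-x_\nu^\star\|^2$. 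This $v_{t-1}$ is $\mathcal{F}_{t-1}$-measurable and non-negative, and $\sum_{t=1}^T v_{t-1}=\mathscr{V}_T$. The constant $\sqrt{24d}G$ in the definition of $\vartheta_t$ was evidently chosen so that this normalization comes out exactly; I will check the factor $8\cdot 3=24$ carefully.

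Next, apply \cref{theorem:generalized_freedman} at time $T$ with $\alpha_t=\mathfrak{a}_t$ (so $\alpha\le 30G\sqrt d(T-1)/m$), $\beta=\mathfrak{b}_T(\delta)$, and $x=\Gamma_T(\delta)$. The theorem bounds the probability of $\{\sum_{t}\vartheta_t\ge \Gamma_T \text{ and } \mathscr{V}_T\le\sum_t\mathfrak{a}_t\vartheta_t+\mathfrak{b}_T\}$ by $\exp(-\Gamma_T^2/(4\alpha\Gamma_T+8\mathfrak{b}_T))$. The complement of this event is contained in $E_4(\delta)$; the boundary case $\sum\vartheta_t=\Gamma_T$ is harmless. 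It therefore suffices to show $\Gamma_T^2\ge (4\alpha\Gamma_T+8\mathfrak{b}_T)\ln(1/\delta)$, which splits into two cases according to which term dominates the denominator. If $4\alpha\Gamma_T\ge 8\mathfrak{b}_T$, the requirement becomes $\Gamma_T\ge 8\alpha\ln(1/\delta)$, which holds since $\Gamma_T\ge 240G\sqrt d(T-1)\ln(1/\delta)/m\ge 8\alpha\ln(1/\delta)$. Otherwise the requirement becomes $\Gamma_T^2\ge16\mathfrak{b}_T\ln(1/\delta)$, which holds by the second entry of the max. So the failure probability is at most $\delta$. If all $\mathfrak{a}_t$ and $\beta$ vanish, the degenerate case is excluded anyway because $\mathfrak{b}_T>0$.

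The main obstacle is a subtlety: $\mathfrak{a}_t$ are only guaranteed to be deterministic non-negative reals satisfying the bound. I expect they are deterministic, since from the proof of \cref{lemma:var_process_bound_convergence} they depend only on $k,t_0,T,G,d,m$; I will confirm this. The other point is that the MGF condition must hold unconditionally of $E_1\cap E_2$, not merely on that event. This is exactly why the safeguard set $\mathcal{C}$ is used: it guarantees $x_{t-1}\in\mathcal{C}$ always, so the sub-Gaussian bound of \cref{lemma:sub-Gaussian_grad_est} applies at every step without conditioning on a high-probability event.
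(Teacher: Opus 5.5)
Your proposal is correct and follows essentially the same route as the paper. The bound \cref{eq:sub-Gaussian_norm_centered_grad} together with \cref{lemma:sub-Gaussian_zero_mean_MGF} gives the variance proxy $(t+t_0-1)^2\|x_{t-1}-x_\nu^\star\|^2$, whose sum is $\mathscr{V}_T$, and \cref{theorem:generalized_freedman} with $\alpha\le 30G\sqrt{d}(T-1)/m$, $\beta=\mathfrak{b}_T(\delta)$, $x=\Gamma_T(\delta)$ finishes the argument. Your case split on which term dominates $4\alpha\Gamma_T+8\mathfrak{b}_T(\delta)$ is equivalent to the paper's step of bounding each of $4\alpha/\Gamma_T$ and $8\mathfrak{b}_T(\delta)/\Gamma_T^2$ by $1/(2\ln(1/\delta))$; your remarks that the safeguard makes the MGF bound hold unconditionally and that the $\mathfrak{a}_t$ are deterministic spell out what the paper leaves implicit.
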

\begin{proof}
By \cref{lemma:sub-Gaussian_grad_est}, we have
\[
\begin{aligned}
& \mathbb{E}\Econd*{\exp\!\left(\frac{3\vartheta_t^2}{(t+t_0-1)^2\|x_{t-1}-x_\nu^\star\|^2}\right)}{\mathcal{F}_{t-1}} \\
\leq{} & \mathbb{E}\Econd*{
\exp\!\left(\frac{\|g_t-\nabla f_\nu(x_{t-1})\|^2
}{8G^2d}\right)}{\mathcal{F}_{t-1}}\leq 2.
\end{aligned}
\]
\Cref{lemma:sub-Gaussian_zero_mean_MGF} then leads to
\[
\mathbb{E}\Econd*{\exp(\lambda \vartheta_t)}{\mathcal{F}_{t-1}}
\leq\exp\!\left(\frac{1}{2}(t+t_0-1)^2\|x_{t-1}-x_\nu^\star\|^2\right).
\]
Noting that $\max_{1\leq t\leq T}\mathfrak{a}_t=30G\sqrt{d}(T-1)/m$, by \cref{theorem:generalized_freedman}, we get
\[
\begin{aligned}
& \mathbb{P}\!\left(
\sum_{t=1}^T\vartheta_t > \Gamma_T\text{ and }
\mathscr{V}_T\leq \sum_{t=1}^T\mathfrak{a}_t\vartheta_t+\mathfrak{b}_T(\delta)
\right) \\
\leq{} &
\exp\!\left(
-\frac{\Gamma_T^2}{120G\sqrt{d}(T-1)\Gamma_T/m+8\mathfrak{b}_T(\delta)}
\right).
\end{aligned}
\]
Now, by the definition of $\Gamma_T$, we have
\[
\frac{120G\sqrt{d}(T-1)}{m\Gamma_T}
\leq\frac{1}{2\ln(1/\delta)}
\qquad\text{and}\qquad
\frac{8\mathfrak{b}_T(\delta)}{\Gamma_T^2}\leq\frac{1}{2\ln(1/\delta)},
\]
which then implies that
\[
\begin{aligned}
& \exp\!\left(
-\frac{\Gamma_T^2}{120G\sqrt{d}(T-1)\Gamma_T/m+8\mathfrak{b}_T(\delta)}
\right) \\
={} &
\exp\!\left(
-\frac{1}{120G\sqrt{d}(T-1)/(m\Gamma_T)+8\mathfrak{b}_T(\delta)/\Gamma_T^2}
\right)\leq\delta.
\end{aligned}
\]
We have now obtained the desired result.
\end{proof}

We can now summarize all the previous results, and obtain the convergence guarantee of the inner loop iterations.

\begin{theorembis}{theorem:subroutine_convergence}
Let $\delta\in(0,\min\{1/5,1/\ln d\})$ be arbitrary. Suppose $m=\max\{\mu,G^2/\Delta\}$, $\nu\leq G/(2m)$, and for each $t\in[T]$, the step sizes are given by
\[
\eta_t=\frac{3}{m(t+t_0)},\qquad\text{where}
\quad
t_0= 50d\ln(1+7\ln(700d))+25d\ln\frac{2\pi^2}{3\delta}.
\]
Then with probability at least $1-\delta$, we have $x_t=\widehat{x}_t\in\mathcal{C}\subseteq\mathcal{D}_\nu$ for all $t\in[T]$, and
\[
F_{x_0}(\overline{x}_T)-\inf_{x\in\mathcal{D}} F_{x_0}(x)
\leq C\!\left(\frac{d\ln(1/\delta)}{T}\cdot \Delta
+\left[\frac{1}{T}
+\left(\frac{d\ln(1/\delta)}{T}\right)^{\!2}\right]\frac{m\Delta^2}{G^2}
\right)
+\frac{3G\nu}{2},
\]
where $C>0$ is some absolute numerical constant.
\end{theorembis}
\begin{proof}
In this proof, we use $C_1,C_2,C_3,\ldots$ to denote certain absolute numerical constants.

Note that, by our choice of $t_0$, we have
\[
\mathbb{P}\!\left(E_1\cap E_2\cap E_3(\delta/4)\cap E_4(\delta/4)\right)
\geq 1-\delta.
\]
Now suppose the event $E_1\cap E_2\cap E_3(\delta/4)\cap E_4(\delta/4)$ occurs. Then $x_t=\widehat{x}_t\in\mathcal{C}$ for all $t\in[T]$. Thus we can apply \cref{lemma:convergence_raw_gap_bound} together with the definitions of $E_3(\delta/4)$ and $E_4(\delta/4)$ to get
\[
\begin{aligned}
F_{\nu,x_0}(\overline{x}_T)
-F_{\nu,x_0}(x_\nu^\star)
\leq{} &
\frac{2}{T(T+2t_0-1)}\sqrt{24d}G\Gamma_T(\delta/4) \\
&
+\frac{6}{mT(T+2t_0-1)}\cdot 4G^2d\left(T\ln 2+\ln\frac{4}{\delta}\right) \\
& + \frac{m(48T+t_0(t_0-1)/3)}{T(T+2t_0-1)}\|x_0-x_\nu^\star\|^2.
\end{aligned}
\]
Then, we note that
\[
\begin{aligned}
\sqrt{16\mathfrak{b}_T(\delta)\ln\frac{1}{\delta}}
\leq{} & C_1\!\left(\sqrt{(t_0^3+T^2)\frac{\Delta^2}{G^2}+\frac{T^2G^2d}{m^2}\ln\frac{1}{\delta}}\right)
\leq
C_1\!\left(
\left(t_0^{3/2}+T\right)\frac{\Delta}{G}+\frac{TG\sqrt{d}}{m}\ln\frac{1}{\delta}
\right),
\end{aligned}
\]
where we used $\|x_0-x_\nu^\star\|^2\leq 4\Delta^2/G^2$. Therefore
\[
\begin{aligned}
\Gamma_T(\delta/4)
\leq{} &
C_2\!\left(
\frac{G\sqrt{d}T}{m}\ln\frac{1}{\delta}
+\left(t_0^{3/2}+T\right)\frac{\Delta}{G}
\right) \\
\leq{} &
C_3\!\left(
\frac{\sqrt{d}T\Delta}{G}\ln\frac{1}{\delta}
+\left(d^{\frac{3}{2}}\left(\ln\ln d\right)^{\frac{3}{2}}+T\right)\frac{\Delta}{G}
\right) \\
\leq{} &
C_4\!\left(\frac{\sqrt{d}T\Delta}{G}\ln\frac{1}{\delta}
+d^{\frac{3}{2}}\left(\ln\frac{1}{\delta}\right)^{\!\frac{3}{2}}\frac{\Delta}{G}\right),
\end{aligned}
\]
where we used the choice of $t_0$ as well as $m\geq G^2/\Delta$ in the second step, and used $\delta\leq 1/{\ln d}$ in the third step. Consequently,
\[
\begin{aligned}
F_{\nu,x_0}(\overline{x}_T)
-F_{\nu,x_0}(x_\nu^\star)
\leq{} &
C_5\!\left(
\frac{\sqrt{d}G}{T^2}\left[\frac{\sqrt{d}T\Delta}{G}\ln\frac{1}{\delta}
+d^{\frac{3}{2}}\left(\ln\frac{1}{\delta}\right)^{\!\frac{3}{2}}\frac{\Delta}{G}
\right]
+\frac{G^2d}{mT}\ln\frac{1}{\delta}
+\frac{T+t_0^2}{T^2}\frac{m\Delta^2}{G^2}
\right) \\
\leq{} &
C_6\!\left(
\frac{d\Delta}{T}\ln\frac{1}{\delta}
+\frac{d^2\Delta}{T^2}\left(\ln\frac{1}{\delta}\right)^{\!\frac{3}{2}}
+\left(\frac{1}{T}+\frac{d^2(\ln\ln d)^2}{T^2}\right)\frac{m\Delta^2}{G^2}
\right) \\
\leq{} &
C_7\!\left(
\frac{d\Delta}{T}\ln\frac{1}{\delta}
+\left(\frac{1}{T}+\frac{d^2(\ln(1/\delta))^2}{T^2}\right)\frac{m\Delta^2}{G^2}
\right),
\end{aligned}
\]
where we again used $m\geq G^2/\Delta$ to combine relevant terms, and used $\delta<1/{\ln d}$ in the last step. 

As a final step of the proof, we notice that $F_{x_0}(x)\geq \Delta\geq f(x_0)=F_{x_0}(x_0)$ for all $x\in\mathcal{D}\backslash\mathcal{S}_{x_0}(\Delta)$, and thus
\[
\begin{aligned}
\inf_{x\in\mathcal{D}} F_{x_0}(x)
={} & \inf_{x\in \mathcal{S}_{x_0}(\Delta)} F_{x_0}(x)
=\inf_{x\in x_0+\frac{10\Delta}{G}\mathbb{B}_d} F_{x_0}(x) \\
\geq{} &
\inf_{x\in x_0+\frac{10\Delta}{G}\mathbb{B}_d} (F_{\nu,x_0}(x)-G\nu)
=F_{\nu,x_0}(x_\nu^\star)-G\nu,
\end{aligned}
\]
where the inequality follows from Part 1 of \cref{lemma:properties_smoothed_func}. Consequently, 
\[
\begin{aligned}
F_{x_0}(\overline{x}_T)-\inf_{x\in\mathcal{D}} F_{x_0}(x)
\leq{} & F_{\nu,x_0}(\overline{x}_T) + m\nu^2
-F_{\nu,x_0}(x_\nu^\star)+G\nu \\
\leq{} & F_{\nu,x_0}(\overline{x}_T)
-F_{\nu,x_0}(x_\nu^\star) + \frac{3G\nu}{2},
\end{aligned}
\]
where we used Part 1 of \cref{lemma:bigF_smoothed_ver_comparison} and also $m\nu^2\leq G\nu/2$ by our choice of $\nu$. Combining all the previous results finishes the proof.
\end{proof}

\section{Conclusion}

In this work, we study zeroth-order optimization for nonsmooth, nonconvex problems with convex liftings, with discrete-time state-feedback $\mathcal{H}_\infty$ policy optimization being a specific motivating example. We propose the zeroth-order proximal point algorithm, and show that with high probability, i) all intermediate iterates will remain in the feasible region even though the algorithm is projection-free; and ii) the algorithm returns an $\epsilon$-optimal solution with oracle complexity $\widetilde{O}(d\epsilon^{-3})$. By applying the proposed algorithm to discrete-time state-feedback $\mathcal{H}_\infty$ control, we obtain a policy optimization algorithm which achieves global optimality with guaranteed oracle complexity. Some promising future directions are as follows: i) It would be interesting to see if the complexity bound can be further improved by establishing stronger landscape properties of state-feedback $\mathcal{H}_\infty$ policy optimization. ii) Our algorithm assumes access to an exact zeroth-order oracle; how to develop a purely sample-based method that combines $\mathcal{H}_\infty$ cost estimation and optimization would be another interesting future direction.

\section*{Statement on the Use of AI Systems.}
The authors have had active discussions with ChatGPT 5.6 Sol on several technical details of this work. Particularly, the AI system assisted in formulating and proving~\cref{lemma:time-uniform_poly_growth_v}, and also assisted in pinpointing the important reference~\cite{howard2021time} which contains~\cref{theorem:stitched_boundary}; both \cref{lemma:time-uniform_poly_growth_v} and \cref{theorem:stitched_boundary} are critical tools in improving the dependence of the oracle complexity on the dimension $d$. The AI system also proposed an alternative approach for bounding the failure probability, but after careful inspection, we abandoned this approach because it leads to inferior complexity bounds. In addition, the AI system was used for polishing the language of the article. The authors have independently verified, substantially revised, and rewritten all AI-assisted materials, and take full responsibility for the accuracy and content of the manuscript.

\bibliographystyle{unsrt}
\bibliography{refs.bib}

\appendix

\crefalias{section}{appendix}
\crefalias{subsection}{appendix}

\section{Proofs of Auxiliary Lemmas}

\subsection{Proof of \Cref{lemma:properties_smoothed_func}}
\label{appendix:proof_properties_smoothed_func}

Note that $S+\nu\mathbb{B}_d\subseteq\mathcal{S}(\Delta)\subseteq\mathcal{D}$ implies $S\subseteq\mathcal{D}_\nu$, and therefore $f_\nu(x)$ and $\nabla f_\nu(x)$ are well-defined for all $x\in S$. Now we proceed to prove the three claims. 
\begin{enumerate}
\item We have
\[
\begin{aligned}
|f_\nu(x)-f(x)| ={} & \left|
\mathbb{E}_{y\sim\mathcal{U}(\mathbb{B}_d)}[f(x+\nu y)-f(x)]
\right| \\
\leq{} & \mathbb{E}_{y\sim\mathcal{U}(\mathbb{B}_d)}\!\left[\left|f(x+\nu y)-f(x)\right|\right] \\
\leq{} & \mathbb{E}_{y\sim\mathcal{U}(\mathbb{B}_d)}\!\left[G_\Delta\|\nu y\|\right]\leq G_\Delta\nu.
\end{aligned}
\]
\item We have
\[
\begin{aligned}
|f_\nu(x)-f_\nu(y)|
={} & 
\left|\mathbb{E}_{w\sim\mathcal{U}(\mathbb{B}_d)}[f(x+\nu w)-f(y+\nu w)]\right| \\
\leq{}  &
\mathbb{E}_{w\sim\mathcal{U}(\mathbb{B}_d)}\!\left[\left|f(x+\nu w)-f(y+\nu w)\right|\right] \\
\leq{} & \mathbb{E}_{w\sim\mathcal{U}(\mathbb{B}_d)}\!\left[G_\Delta\|x-y\|\right] = G_\Delta\|x-y\|.
\end{aligned}
\]
\item By \cref{lemma:expectation_grad_est}, we have
\[
\begin{aligned}
\left\|\nabla f_\nu(x)-\nabla f_\nu(y)\right\|
= & \left\|\frac{d}{2\nu}\mathbb{E}_{z\sim\mathcal{U}(\mathbb{S}_{d-1})}
\!\left[(f(x+\nu z)-f(x-\nu z)-f(y+\nu z)+f(y-\nu z))z\right]\right\| \\
\leq{} & \frac{d}{2\nu}\mathbb{E}_{z\sim\mathcal{U}(\mathbb{S}_{d-1})}
\!\left[|f(x+\nu z)-f(y+\nu z)|+|f(x-\nu z)-f(y-\nu z)|\right] \\
\leq{} & \frac{d}{2\nu}\mathbb{E}_{z\sim\mathcal{U}(\mathbb{S}_{d-1})}
\!\left[2G_\Delta\|x-y\|\right] \\
={} & \frac{G_\Delta d}{\nu}\|x-y\|.
\end{aligned}
\]
Now suppose all convex combinations of $x$ and $y$ are in $S$. Then
\[
\begin{aligned}
f_\nu(y)-f_\nu(x) ={} & \int_0^1 \langle\nabla f_\nu(x+s(y-x)),y-x\rangle\ud s \\
={} & \langle\nabla f_\nu(x),y-x\rangle
+\int_0^1\langle\nabla f_\nu(x+s(y-x))-\nabla f_\nu(x),y-x\rangle\ud s \\
\leq{} & \langle\nabla f_\nu(x),y-x\rangle+
\int_0^1\|\nabla f_\nu(x+s(y-x))-\nabla f_\nu(x)\|\|y-x\|\,\ud s \\
\leq{} &
\langle\nabla f_\nu(x),y-x\rangle
+\|y-x\|\int_0^1 L_{\nu,\Delta}s\|y-x\|\,\ud s \\
={} & \langle\nabla f_\nu(x),y-x\rangle
+\frac{L_{\nu,\Delta}}{2}\|y-x\|^2.
\end{aligned}
\]
The proof is now complete.
\end{enumerate}

\subsection{Proof of \Cref{lemma:sub-Gaussian_grad_est}}
\label{appendix:proof_lemma_sub-Gaussian_grad_est}

The proof of \cref{lemma:sub-Gaussian_grad_est} is based on the following concentration inequality; this concentration inequality was also used in \cite{shamir2017optimal} for bounding $\mathbb{E}_z\!\left[\|\sfG(x;z,\nu)\|^2\right]$.

\begin{lemma}[\cite{aubrun2026optimal}]
\label{lemma:Levy_concentration_sphere}
Let $h:\mathbb{S}_{d-1}\rightarrow\mathbb{R}$ be a $1$-Lipschitz continuous function. Then for every $\epsilon>0$, we have
\[
\mathbb{P}_z\!\left(|h(z)-\mathbb{E}_z[h(z)]|\geq \epsilon\right)
\leq \exp\!\left(
-\frac{d\epsilon^2}{2}\right),
\]
where $z\sim\mathcal{U}(\mathbb{S}_{d-1})$.
\end{lemma}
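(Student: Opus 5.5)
The plan is to follow the classical functional-inequality route to concentration on the sphere, and to treat the sharp constant in the exponent (and the absence of a prefactor $2$ for the two-sided bound) as the delicate part. The sharp form is presumably what \cite{aubrun2026optimal} supplies, so the plan below only reconstructs the mechanism and flags where extra care is needed.

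\textbf{Reduction to smooth $h$.} By a standard approximation (for instance, convolution on $\mathbb{S}_{d-1}$ with a smooth rotation-invariant kernel, which preserves the $1$-Lipschitz property), it suffices to treat smooth $h$ with $\|\nabla_{\mathbb{S}}h\|\leq 1$. We may also assume $\mathbb{E}_z[h(z)]=0$.

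\textbf{Log-Sobolev and Herbst.} Next I would invoke the logarithmic Sobolev inequality on $\mathbb{S}_{d-1}$ with the uniform measure. Its optimal constant is $1/(d-1)$ (Mueller--Weissler; alternatively Bakry--\'Emery with Ricci curvature $d-2$ together with the sharp spectral-gap refinement), in the form
\[
\operatorname{Ent}(F^2)\leq \frac{2}{d-1}\,\mathbb{E}\big[\|\nabla_{\mathbb{S}}F\|^2\big].
\]
Applying Herbst's argument with $F=e^{\lambda h/2}$ gives $\mathbb{E}[e^{\lambda h}]\leq e^{\lambda^2/(2(d-1))}$ for all $\lambda\in\mathbb{R}$. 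Chebyshev's inequality then yields the one-sided tails $\mathbb{P}(\pm h\geq\epsilon)\leq \exp(-(d-1)\epsilon^2/2)$.

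\textbf{Sharpening to the stated form.} This already gives a bound of the correct order, but with $d-1$ in place of $d$ and a factor $2$ for the two-sided event. Removing both is the main obstacle. The approach I would try first is geometric rather than functional:
\begin{enumerate}
\item Use L\'evy's spherical isoperimetric inequality. It shows that the $\epsilon$-enlargement of the half-sphere $\{h\leq M_h\}$, where $M_h$ is a median of $h$, has measure at least that of the enlargement of a hemisphere. Hence each of the events $\{h-M_h\geq\epsilon\}$ and $\{h-M_h\leq-\epsilon\}$ is controlled by the normalized measure of a spherical cap of geodesic radius $\pi/2-\epsilon$.
\item Compute that cap measure exactly via the density $\propto(1-s^2)^{(d-3)/2}$ of one coordinate of $z$. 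Estimate it sharply, so that the two caps together are bounded by $\exp(-d\epsilon^2/2)$, at least in the nontrivial range $\epsilon\lesssim 1$ (for $\epsilon\geq 2$ the probability is zero because $h$ has range at most $2$).
\item Pass from the median $M_h$ to the mean $\mathbb{E}_z[h(z)]$. Here the previous sub-Gaussian bound gives $|\mathbb{E}_z[h(z)]-M_h|=O(1/\sqrt d)$, and one must absorb this shift without losing the constant.
\end{enumerate}

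I expect the exact cap computation and the median-to-mean correction in steps 2 and 3 to be where the argument is tight. If they resist a clean treatment, I would fall back on the cited optimal result \cite{aubrun2026optimal} for precisely this constant. The rest of the paper only uses the lemma up to absolute constants, and the Herbst bound with $2\exp(-(d-1)\epsilon^2/2)$ would suffice there after adjusting constants.
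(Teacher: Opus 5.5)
The paper does not prove this lemma at all. It imports it verbatim from \cite{aubrun2026optimal}, so your fallback is the paper's treatment. What you actually derive, however, stops short of the statement.

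Log-Sobolev plus Herbst gives $\mathbb{P}_z(|h(z)-\mathbb{E}_z[h(z)]|\geq\epsilon)\leq 2\exp(-(d-1)\epsilon^2/2)$. This is weaker both in the prefactor and in the exponent. Since the log-Sobolev constant $1/(d-1)$ is already optimal, both improvements must come from your steps~2 and~3, and those steps carry the entire content of the lemma.

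\emph{Step~2} is a plausible but unperformed one-dimensional estimate. Note that L\'evy's inequality gives you caps $\{z_1\geq\sin\epsilon\}$, not $\{z_1\geq\epsilon\}$.

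\emph{Step~3} cannot work as described. Suppose the median and the mean differ by $a\asymp 1/\sqrt{d}$. Replacing $\epsilon$ by $\epsilon-a$ in a median-centred bound of order $\exp(-d\epsilon^2/2)$ multiplies it by $\exp(da\epsilon-da^2/2)$, roughly $\exp(c\sqrt{d}\,\epsilon)$. This factor is unbounded precisely in the regime $\epsilon\sqrt{d}\gg 1$, which is where the lemma has content. The favourable shift on the opposite tail only turns this into a $\cosh(c\sqrt{d}\,\epsilon)$ factor and does not offset it. A prefactor-free, mean-centred two-sided bound needs the two tails to be controlled jointly with the location of the mean, not a median bound plus a worst-case shift. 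That is what the cited optimal-constant result provides. Without it, your argument proves a different, weaker lemma.

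Your closing remark is correct for the $\widetilde{O}(d\epsilon^{-3})$ conclusion but not for the paper as written. The proof of \cref{lemma:sub-Gaussian_grad_est} uses the exact form of this lemma. If you feed in $2\exp(-(d-1)\epsilon^2/2)$ instead, the same moment computation bounds $\mathbb{E}_z[\exp(\|\sfG(x;z,\nu)\|^2/(4G_\Delta^2 d))]$ only by $1+2\sum_{k\geq 1}\bigl(d/(2d-2)\bigr)^k$. This exceeds $2$ for every $d$, and equals $7$ for $d=3$. The scales in \cref{eq:sub-gaussiaon_grad_est_norm,eq:sub-Gaussian_norm_centered_grad} would then have to be enlarged, and every explicit constant downstream recomputed: the requirement $t_0\geq 6d\ln(4/\delta)$ in \cref{lemma:probability_E2}, the value of $t_0$ in \cref{theorem:subroutine_convergence}, and so on. So either invoke the sharp result, as the paper does, or state the weaker lemma and propagate it explicitly.
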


Now we prove \cref{lemma:sub-Gaussian_grad_est}. Define
\[
h_x(z) = \frac{1}{G_\Delta}\cdot\frac{f(x+\nu z)-f(x-\nu z)}{2\nu}.
\]
When $z\sim\mathcal{U}(\mathbb{S}_{d-1})$, we have $\mathbb{E}_z[h_x(z)]=0$ by symmetry. Moreover, for all $z_1,z_2\in\mathbb{S}_{d-1}$, we have
\[
\begin{aligned}
|h_x(z_1)-h_x(z_2)|
={} & \frac{1}{2\nu G_\Delta}
|f(x+\nu z_1)-f(x-\nu z_1)-f(x+\nu z_2)+f(x-\nu z_2)| \\
\leq{} & \frac{1}{2\nu G_\Delta}
\left(|f(x+\nu z_1)-f(x+\nu z_2)|
+|f(x-\nu z_1)-f(x-\nu z_2)|\right) \\
\leq{} & \frac{1}{2\nu G_\Delta}
\cdot 2G_\Delta \nu\|z_1-z_2\|
=\|z_1-z_2\|,
\end{aligned}
\]
showing that $h_x$ is a $1$-Lipschitz continuous function on $\mathbb{S}_{d-1}$. We can now apply \cref{lemma:Levy_concentration_sphere} to obtain
\[
\mathbb{P}_z\!\left(
|h_x(z)|\geq \epsilon
\right) \leq\exp\!\left(-\frac{d\epsilon^2}{2}\right),\qquad\forall \epsilon>0.
\]
On the other hand, note that for any $z\in\mathbb{S}_{d-1}$, we have $\|G(x;z,\nu)\| = G_\Delta d |h_x(z)|$. Therefore
\[
\mathbb{P}_z\!\left(\|G(x;z,\nu)\|\geq \epsilon\right)
=
\mathbb{P}_z\!\left(\|h_x(z)\|\geq \frac{\epsilon}{G_\Delta d}\right)
\leq \exp\!\left(
-\frac{\epsilon^2}{2G_\Delta^2 d}\right),
\]
which is just~\cref{eq:high_prob_norm_grad_est}.

The bound~\cref{eq:sub-gaussiaon_grad_est_norm} is a standard result for sub-Gaussian random variables: Note that
\[
\begin{aligned}
\mathbb{E}_z\!\left[\exp\!\left(\frac{\|G(x;z,\nu)\|^2}{4G_\Delta^2 d}\right)\right]
={} & 1+\sum_{k=1}^\infty\frac{1}{k!}\mathbb{E}_z\!\left[\left(\frac{\|G(x;z,\nu)\|}{2G_\Delta\sqrt{d}}\right)^{\!2k}\right],
\end{aligned}
\]
while
\[
\begin{aligned}
\mathbb{E}_z\!\left[\left(\frac{\|G(x;z,\nu)\|}{2G_\Delta\sqrt{d}}\right)^{\!2k}\right]
={} & \int_0^{+\infty}\mathbb{P}_z\!\left(\frac{\|G(x;z,\nu)\|}{2G_\Delta\sqrt{d}}
\geq u\right) 2k u^{2k-1}\,\ud u \\
\leq{} &
\int_0^{+\infty}
\exp\!\left(
-2u^2
\right)2ku^{2k-1}\,\ud u \\
={} & \frac{k!}{2^{k}}.
\end{aligned}
\]
Therefore
\[
\mathbb{E}_z\!\left[\exp\!\left(\frac{\|G(x;z,\nu)\|^2}{4G_\Delta^2 d}\right)\right]
\leq 1+\sum_{k=1}^\infty\frac{1}{k!}\frac{k!}{2^k}
=2,
\]
which completes the proof of~\cref{eq:sub-gaussiaon_grad_est_norm}.

To show the inequality~\cref{eq:sub-Gaussian_norm_centered_grad}, we note that 
\[
\begin{aligned}
\left\|G(x;z,\nu) - \nabla f_\nu(x)\right\|^2
\leq{} &
\left(1+\frac{1}{3}\right)\|G(x;z,\nu)\|^2 + (1+3)\left\|\nabla f_\nu(x)\right\|^2 \\ 
\leq{} &
\frac{4}{3}\|G(x;z,\nu)\|^2 + 4G_\Delta^2
\end{aligned}
\]
Hence
\[
\begin{aligned}
\mathbb{E}_z\!\left[\exp\!\left(\frac{\left\|G(x;z,\nu)-\nabla f_\nu(x)\right\|^2}{8G_\Delta^2d}\right)\right]
\leq{} & \mathbb{E}_z\!\left[\exp\!\left(\frac{\left\|G(x;z,\nu)\right\|^2}{6G_\Delta^2d} + \frac{1}{2d}\right)\right] \\
\leq{} & 
\mathbb{E}_z\!\left[\exp\left(\frac{\left\|G(x;z,\nu)\right\|^2}{4G_\Delta^2d}\cdot \frac{2}{3}\right)\right] \cdot e^{1/6},
\end{aligned}
\]
where we used $1/(2d)\leq 1/6$ for $d\geq 3$. From \cref{eq:sub-gaussiaon_grad_est_norm} and Jensen's inequality, we can get 
\[
\mathbb{E}_z\!\left[\exp\left(\frac{\left\|G(x;z,\nu)\right\|^2}{4G_\Delta^2d}\cdot \frac{2}{3}\right)\right] 
\leq \left(\mathbb{E}_z\!\left[\exp\frac{\left\|G(x;z,\nu)\right\|^2}{4G_\Delta^2d}\right] \right) ^{\!\frac{2}{3}}
\leq 2^\frac{2}{3}.
\]
Thus
\[
\mathbb{E}_z\!\left[\exp\!\left(\frac{\left\|G(x;z,\nu)-\nabla f_\nu(x)\right\|^2}{8G_\Delta^2d}\right)\right] \leq 2^{\frac{2}{3}}\cdot e^{1/6}\leq 2,
\]
which is just~\cref{eq:sub-Gaussian_norm_centered_grad}.

\subsection{Proof of \Cref{lemma:time-uniform_poly_growth_v}}
\label{appendix:proof_time-uniform_poly_growth_v}

Let $t\in[T]$ be arbitrary. We have, for any $k\in[t]$ and $\lambda\in(0,1/v_k]$,
\[
\begin{aligned}
\mathbb{E}\Econd*{\exp(\lambda h_k-\lambda v_k\ln 2)}{\mathcal{F}_{k-1}}
={} & 2^{-\lambda v_k}\mathbb{E}\Econd*{(\exp(h_k/v_k))^{\lambda v_k}}{\mathcal{F}_{k-1}} \\
\leq{} & 2^{-\lambda v_k}\left(\mathbb{E}\Econd*{\exp(h_k/v_k)}{\mathcal{F}_{k-1}}\right)^{\lambda v_k} \\
\leq{} & 2^{-\lambda v_k} 2^{\lambda v_k}=1,
\end{aligned}
\]
where the second step follows by Jensen's inequality and $\lambda v_k\leq 1$. As a result, if we let
\[
L_k(\lambda)=\exp\!\left(\lambda\left(\sum_{i=1}^k h_i-V_k\ln 2\right)\right),
\]
then $(L_k(\lambda))_{k=1}^t$ forms a non-negative supermartingale whenever $0\leq \lambda\leq 1/\max_{k\leq t} v_k=1/v_t$. Noting that $\mathbb{E}[L_1(1/v_t)]\leq 1$, we can now apply Ville's 
inequality to $(L_k(1/v_t))_{k=1}^t$ and get
\[
\mathbb{P}\!\left(\sum_{k=1}^t h_k\geq cV_t\right)
=\mathbb{P}\!\left((L_t(1/v_t)\geq e^{(c-\ln 2)V_t/v_t}\right)\leq\exp\!\left(-\frac{V_t}{v_t}(c-\ln 2)\right)
\]
whenever $c>\ln 2$. Then by the condition~\cref{eq:lemma_time-uniform_poly_growth_v_condition}, we have
\[
\mathbb{P}\!\left(\sum_{k=1}^t h_k\geq cV_t\right)
\leq \exp(-(\alpha t+\beta)(c-\ln 2)).
\]
By taking the union bound, we get
\[
\begin{aligned}
\mathbb{P}\!\left(\exists t\in[T]\text{ s.t. }
\sum_{k=1}^t h_k> cV_t
\right)\leq{} & \sum_{t=1}^T \exp(-(\alpha t+\beta)(c-\ln 2)) \\
\leq{} & \exp(-\beta(c-\ln 2))\sum_{t=1}^\infty\left(\exp(-\alpha(c-\ln 2))\right)^t \\
={} & \frac{\exp(-(\alpha+\beta)(c-\ln 2))}{1-\exp(-\alpha(c-\ln 2))},
\end{aligned}
\]
which gives the desired bound.

\section{Proofs for the Convex Lifting of $J_\infty$}

\subsection{Proof of \cref{proposition:convex_lifting_Hinf}}
\label{appendix:proof_convex_lifting_Hinf}

\noindent\textbf{Part 1.} It is known in classical control theory that, by introducing the transfer matrix
\[
\mathbf{T}_K(z)=\begin{bmatrix}
Q^{1/2} \\
R^{1/2}K
\end{bmatrix}(zI-A-BK)^{-1}B_w,
\]
the objective function $J_\infty$ can be equivalently written as the $\mathcal{H}_\infty$ norm of $\mathbf{T}_K$, i.e.,
\[
J_\infty(K) = \|\mathbf{T}_K\|_{\mathcal{H}_\infty}
=\max_{\omega\in[0,2\pi)}
\sigma_{\max}\!\left(\mathbf{T}_{K}(e^{\uj\omega})\right),
\]
where $\sigma_{\max}(\cdot)$ denotes the largest singular value of a matrix. The following discrete-time bounded real lemma then plays a fundamental role in convexifying the epigraph of $J_\infty$.

\begin{lemma}[Non-strict Bounded Real Lemma, {\cite{vaidyanathan1985discrete}}]
\label{lemma:DT_bounded_real}
Let $\mathbf{G}(z)=C(zI-A)^{-1}B+D$, where $A,B,C,D$ are real matrices of proper dimensions, and let $\gamma>0$ be arbitrary. Suppose $(A,B)$ is controllable and $(C,A)$ is observable. Then the following statements are equivalent:
\begin{enumerate}
\item $A$ is Schur stable and $\|\mathbf{G}\|_{\mathcal{H}_\infty}\leq\gamma$.
\item There exists a real positive definite matrix $P$ such that
\[
\begin{bmatrix}
A^\tran PA-P & A^\tran P B \\
B^\tran PA & B^\tran P B - \gamma^2 I
\end{bmatrix}+\begin{bmatrix}
C^\tran \\ D^\tran
\end{bmatrix}
\begin{bmatrix}
C & D
\end{bmatrix}\preceq 0.
\]
\end{enumerate}
\end{lemma}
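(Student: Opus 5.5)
The plan is to prove the two implications separately. I will work with the dissipation inequality induced by the matrix inequality in statement~2, namely $V(x)=x^\tran Px$ satisfying
\[
V(x(t+1))-V(x(t))\leq \gamma^2\|w(t)\|^2-\|z(t)\|^2
\]
along trajectories of $x(t+1)=Ax(t)+Bw(t)$, $z(t)=Cx(t)+Dw(t)$. This is just the quadratic form of the matrix in statement~2 evaluated at $(x(t),w(t))$.

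\textbf{(2)$\Rightarrow$(1).} First I will show Schur stability from the upper-left block, which gives $A^\tran PA-P+C^\tran C\preceq 0$. Suppose $Av=\lambda v$ with $v\neq 0$ and $|\lambda|\geq 1$. Testing this inequality with $v$ gives $(|\lambda|^2-1)v^*Pv+\|Cv\|^2\leq 0$. Since $P\succ 0$, this forces $Cv=0$, which contradicts observability of $(C,A)$ by the PBH test. Hence $A$ is Schur stable, and in particular $e^{\uj\omega}I-A$ is invertible for every $\omega$. For the norm bound, fix $\omega$ and $u\in\mathbb{C}^{n}$, and set $x=(e^{\uj\omega}I-A)^{-1}Bu$, so that $Ax+Bu=e^{\uj\omega}x$. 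I will evaluate the Hermitian form of the matrix inequality at $(x,u)$. The $P$-terms combine into $|e^{\uj\omega}|^2x^*Px-x^*Px=0$, which leaves $\|\mathbf{G}(e^{\uj\omega})u\|^2\leq\gamma^2\|u\|^2$. Taking the supremum over $\omega$ and $u$ yields $\|\mathbf{G}\|_{\mathcal{H}_\infty}\leq\gamma$.

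\textbf{(1)$\Rightarrow$(2).} This is the harder direction, because only the non-strict bound $\|\mathbf{G}\|_{\mathcal{H}_\infty}\leq\gamma$ is available. I will go through the strict version and take a limit. For each $\epsilon\in(0,1]$ we have $\|\mathbf{G}\|_{\mathcal{H}_\infty}<\gamma+\epsilon$ with $A$ Schur. The classical strict bounded real lemma (via the stabilizing solution of the associated Riccati equation) then supplies $P_\epsilon\succ0$ satisfying the inequality of statement~2 with $\gamma$ replaced by $\gamma+\epsilon$. The crux is to bound $P_\epsilon$ uniformly above and below, so that a convergent subsequence has a positive definite limit.

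For the lower bound, I will start from $x(0)=x_0$ with $w\equiv 0$ and sum the dissipation inequality. Stability makes $V(x(t))\to 0$, which gives $x_0^\tran P_\epsilon x_0\geq\sum_t\|CA^tx_0\|^2=x_0^\tran W_ox_0$. The observability Gramian $W_o$ is positive definite because $(C,A)$ is observable. For the upper bound, controllability of $(A,B)$ lets me steer $0$ to $x_0$ in $n$ steps with input energy $x_0^\tran W_{c}^{-1}x_0$, where $W_c$ is the finite-horizon reachability Gramian. Summing the dissipation inequality along this input gives $x_0^\tran P_\epsilon x_0\leq(\gamma+1)^2x_0^\tran W_c^{-1}x_0$. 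Thus every $P_\epsilon$ lies in the compact set $\{W_o\preceq P\preceq(\gamma+1)^2W_c^{-1}\}$.

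I then extract a subsequence $P_{\epsilon_k}\to P$. The limit satisfies $P\succeq W_o\succ0$. Since the matrix in statement~2 depends continuously on $(P,\gamma)$ and the semidefinite cone is closed, $P$ satisfies the non-strict inequality with the original $\gamma$. I expect the uniform bounds on $P_\epsilon$ to be the main obstacle, since they are the only place where controllability and observability are genuinely needed. If invoking the strict lemma is considered too heavy, my fallback is to construct $P_\epsilon$ directly as the available-storage function of the system with level $\gamma+\epsilon$; the same two energy bounds apply to it unchanged.
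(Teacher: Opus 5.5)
Your proof is correct. It necessarily differs from the paper, because the paper gives no proof of this lemma: it quotes it from \cite{vaidyanathan1985discrete} and uses it only as a black box in the proof of Proposition~\ref{proposition:convex_lifting_Hinf}. So the relevant comparison is with the classical way of proving the non-strict lemma, not with anything in the paper.

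Your (2)$\Rightarrow$(1) direction is the standard elementary argument and is complete. The PBH test on the $(1,1)$ block gives Schur stability. Evaluating the Hermitian form at $(x,u)$ with $e^{\uj\omega}x=Ax+Bu$ then gives the norm bound.

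For (1)$\Rightarrow$(2), classical treatments for minimal realizations construct $P$ directly. They use either a spectral factorization of $\gamma^2I-\mathbf{G}^\sim\mathbf{G}$ or Willems' storage-function theory. Both must handle a para-Hermitian function that is only nonnegative, possibly singular, on the unit circle, and that is the delicate part.

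You avoid this by pushing all the hard analysis onto the strict lemma at level $\gamma+\epsilon$, which needs no minimality assumption, and recovering the non-strict case by compactness. Your two energy estimates check out:
\begin{itemize}
\item The minimum-energy input steers $0$ to $x_0$ with energy $x_0^\tran W_c^{-1}x_0$. Summing the dissipation inequality with $V(0)=0$ gives $P_\epsilon\preceq(\gamma+1)^2W_c^{-1}$.
\item The free response gives $P_\epsilon\succeq W_o\succ0$.
\item The inequality is closed under $\epsilon_k\to0$, since the matrix depends continuously on $(P,\gamma)$.
\end{itemize}

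What your route buys is transparency about the hypotheses. Observability is used exactly twice: for Schur stability in one direction and for $P\succ0$ of the limit in the other. Controllability is used exactly once, for the uniform upper bound that stops $P_\epsilon$ from blowing up. The price is that the strict bounded real lemma remains an imported result. Your fallback via the available storage would also need the standard but nontrivial fact that the available storage of a linear system with quadratic supply is a quadratic form.

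A minor simplification: once $P_\epsilon\succeq0$ is known, the lower bound needs only $V(x(T))\geq0$ rather than $V(x(T))\to0$. Your decay argument is still the right one if $P_\epsilon$ is taken to be merely a symmetric Riccati solution.
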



Now we are ready to prove the first statement of \cref{proposition:convex_lifting_Hinf}. Let $K\in\mathbb{R}^{n_u\times n_x}$ and $\gamma>0$ be arbitrary. Since $B_wB_w^\tran$ is assumed to be positive definite, $B_w$ has full row rank, and thus $(A+BK, B_w)$ is controllable. Furthermore, since $Q$ is assumed to be positive definite, the matrix $\begin{bmatrix}
Q & K^\tran R^{1/2}
\end{bmatrix}^\tran$ has full column rank, which further implies that $\left(\begin{bmatrix}
Q & K^\tran R^{1/2}
\end{bmatrix}^\tran,A+BK\right)$ is observable. Thus we can apply the non-strict bounded real lemma to the transfer matrix $\mathbf{T}_K$ and conclude that $K\in\mathcal{K}$ and $J_\infty(K)=\|\mathbf{T}_K\|_{\mathcal{H}_\infty}\leq\gamma$ if and only if there exists a positive definite matrix $P$ such that
\[
\begin{bmatrix}
(A+BK)^\tran P(A+BK)-P & (A+BK)^\tran PB_w \\
B_w^\tran P(A+BK) & B_w^\tran PB_w-\gamma^2 I
\end{bmatrix}
+\begin{bmatrix}
Q^{1/2} & K^\tran R^{1/2} \\
0 & 0
\end{bmatrix}\begin{bmatrix}
Q^{1/2} & 0 \\ R^{1/2}K & 0
\end{bmatrix}
\preceq 0,
\]
which is just $\mathscr{M}_{\mathrm{lft}}(K,\gamma,P)\preceq 0$. In other words, $(K,\gamma)\in\operatorname{epi}(J_\infty)$ if and only if there exists a real positive definite matrix $P$ such that $(K,\gamma,P)\in\mathfrak{L}_{\mathrm{lft}}$. The proof of Part 1 is now complete.

\noindent\textbf{Part 2.} Temporarily denote $\mathcal{U}=\mathbb{R}^{n_u\times n_x}\times\mathbb{R}_{++}\times \mathbbm{S}_{++}^{n_x}$ and $\mathcal{V}=\mathbb{R}_{++}\times\mathbb{R}^{n_u\times n_x}\times\mathbbm{S}_{++}^{n_x}$. It is straightforward to see that the domain of $\Phi$ can be extended to $\mathcal{U}$, i.e., we let
\[
\Phi(K,\gamma,P) = (\gamma,\gamma KP^{-1},\gamma P^{-1}),
\qquad\forall K\in\mathbb{R}^{n_u\times n_x},\gamma\in\mathbb{R}_{++},P\in\mathbbm{S}_{++}^{n_x},
\]
and $\Phi$ is obviously infinitely differentiable on the open set $\mathcal{U}\supseteq \mathfrak{L}_{\mathrm{lft}}$. Moreover, it is straightforward to check that the inverse of $\Phi:\mathcal{U}\rightarrow\mathcal{V}$ is given by
\[
\Phi^{-1}(\gamma,Y,X)=(YX^{-1},\gamma,\gamma X^{-1}),
\qquad\forall (\gamma,Y,X)\in\mathcal{V},
\]
and that $\Phi^{-1}$ is infinitely differentiable on the open set $\mathcal{V}$.

Now we only need to prove that $\Phi(\mathfrak{L}_{\mathrm{lft}})\subseteq\mathfrak{F}_{\mathrm{cvx}}$ and $\Phi^{-1}(\mathfrak{F}_{\mathrm{cvx}})\subseteq\mathfrak{L}_{\mathrm{lft}}$. Let $(K,\gamma,P)\in\mathfrak{L}_{\mathrm{lft}}$ be arbitrary, and denote $(\gamma,Y,X)=\Phi(K,\gamma,P)$. Note that $\mathscr{M}_{\mathrm{lft}}(K,\gamma,P)\preceq 0$ can be equivalently written as
\[
\begin{aligned}
-\begin{bmatrix}
(A\!+\! BYX^{-1})^\tran & I & (YX^{-1})^\tran \\
B_w^\tran & 0 & 0
\end{bmatrix}
\begin{bmatrix}
\gamma X^{-1} & 0 & 0 \\ 0 & Q & 0 \\
0 & 0 & R
\end{bmatrix}
\begin{bmatrix}
A\!+\!BYX^{-1} & B_w \\
I & 0 \\
YX^{-1} & 0
\end{bmatrix}
+\begin{bmatrix}
\gamma X^{-1} & 0 \\ 0 & \gamma^2 I
\end{bmatrix}
\succeq 0,
\end{aligned}
\]
where we have used $K=YX^{-1}$ and $P=\gamma X^{-1}$. Since $X=\gamma P^{-1}\succ 0$, we can left- and right-multiply both sides by $\gamma^{-1/2}\operatorname{diag}(X,I)$ to get the equivalent matrix inequality
\[
\begin{aligned}
-\begin{bmatrix}
(AX\!+\! BY)^\tran & X & Y^\tran \\
B_w^\tran & 0 & 0
\end{bmatrix}
\begin{bmatrix}
X^{-1} & 0 & 0 \\ 0 & \gamma^{-1}Q & 0 \\
0 & 0 & \gamma^{-1}R
\end{bmatrix}
\begin{bmatrix}
AX\!+\!BY & B_w \\
X & 0 \\
Y & 0
\end{bmatrix}
+\begin{bmatrix}
X & 0 \\ 0 & \gamma I
\end{bmatrix}
\succeq 0.
\end{aligned}
\]
Note that $X$, $Q$ and $R$ are all positive definite, and so by Schur complement, the above matrix inequality is equivalent to
\begin{equation}
\label{eq:Mcvx_succ_0_in_proof}
\begin{bmatrix}
X & 0 & (AX\!+\!BY)^\tran & X & Y^\tran \\
0 & \gamma I & B_w^\tran & 0 & 0 \\
AX\!+\!BY & B_w & X & 0 & 0 \\
X & 0 & 0 & \gamma Q^{-1} & 0 \\
Y & 0 & 0 & 0 & \gamma R^{-1}
\end{bmatrix}
\succeq 0,
\end{equation}
which is just $\mathscr{M}_{\mathrm{cvx}}(\gamma,Y,X)\succeq 0$. Therefore $(\gamma,Y,X)=\Phi(K,\gamma,P)\in\mathfrak{F}_{\mathrm{cvx}}$, and by the arbitrariness of $(K,\gamma,P)\in\mathfrak{L}_{\mathrm{lft}}$, we get $\Phi(\mathfrak{L}_{\mathrm{lft}})\subseteq\mathfrak{F}_{\mathrm{cvx}}$.

Conversely, let $(\gamma,Y,X)\in\mathfrak{F}_{\mathrm{cvx}}$ be arbitrary. By denoting $K=YX^{-1}$ and $P=\gamma X^{-1}$ so that $(K,\gamma,P)=\Phi^{-1}(\gamma,Y,X)$, we can just reverse the previous calculation to obtain $\mathscr{M}_{\mathrm{lft}}(K,\gamma,P)\preceq 0$ from $\mathscr{M}_{\mathrm{cvx}}(\gamma,Y,X)\succeq 0$, showing that $\Phi^{-1}(\gamma,Y,X)\in\mathfrak{L}_{\mathrm{lft}}$. By the arbitrariness of $(\gamma,Y,X)\in\mathfrak{F}_{\mathrm{cvx}}$, we get $\Phi^{-1}(\mathfrak{F}_{\mathrm{cvx}})\subseteq\mathfrak{L}_{\mathrm{lft}}$. The proof of Part 2 is now complete.

\subsection{Proof of Part 1 of \cref{proposition:Hinf_analytical_properties}}
\label{appendix:proof_Hinf_analytical_properties}

Let $\Delta>0$ be arbitrary. We shall prove that the set
\[
\setc*{(\gamma,Y,X)\in\mathfrak{F}_{\mathrm{cvx}}}{\gamma\leq\Delta}
\]
is a compact set. Then by Part 2 of \cref{proposition:convex_lifting_Hinf}, we see that $\setc*{(K,\gamma,P)\in\mathfrak{L}_{\mathrm{lft}}}{\gamma\leq\Delta}$ is compact.

Let $(\gamma,Y,X)\in\mathfrak{F}_{\mathrm{cvx}}$ satisfy $\gamma\leq\Delta$. Recall that $\mathscr{M}_{\mathrm{cvx}}(\gamma,Y,X)\succeq 0$ is just~\cref{eq:Mcvx_succ_0_in_proof}. By switching the second and the third rows of blocks as well as the second and the third columns of blocks on the left-hand side, we see that $\mathscr{M}_{\mathrm{cvx}}(\gamma,Y,X)\succeq 0$ is equivalent to
\begin{equation*}
\begin{bmatrix}
X & (AX\!+\!BY)^\tran & 0 &  X & Y^\tran \\
AX\!+\!BY & X & B_w & 0 & 0 \\
0 & B_w^\tran & \gamma I & 0 & 0 \\
X & 0 & 0 & \gamma Q^{-1} & 0 \\
Y & 0 & 0 & 0 & \gamma R^{-1}
\end{bmatrix}\succeq 0.
\end{equation*}
By Schur complement, the above matrix inequality is further equivalent to
\[
\begin{bmatrix}
X & (AX\!+\!BY)^\tran \\
AX\!+\!BY & X
\end{bmatrix}
-\begin{bmatrix}
0 & X & Y^\tran \\
B_w 0 & 0
\end{bmatrix}
\begin{bmatrix}
\gamma I & 0 & 0 \\ 0 & \gamma Q^{-1} & 0 \\ 0 & 0 & \gamma R^{-1}
\end{bmatrix}^{-1}
\begin{bmatrix}
0 & B_w^\tran \\ X & 0 \\ Y & 0
\end{bmatrix}\succeq 0,
\]
which is just
\[
\begin{bmatrix}
X-\gamma^{-1}(XQX+Y^\tran RY) & (AX\!+\!BY)^\tran \\
AX+BY & X-\gamma^{-1}W
\end{bmatrix} \succeq 0,
\]
where we recall the notation $W\coloneqq B_wB_w^\tran$. This matrix inequality implies that the two diagonal blocks $X-\gamma^{-1}(XQX+Y^\tran RY)$ and $X- \gamma^{-1}W$ are positive semidefinite. By $X\succeq \gamma^{-1}(XQX+Y^\tran RY)$, we get
\[
\begin{aligned}
\operatorname{tr}(X)\geq{} &
\gamma^{-1}\left[\operatorname{tr}(XQX)+\operatorname{tr}(Y^\tran RY)\right] \\
\geq {} & \gamma^{-1}\left(\lambda_{\min}(Q)\|X\|_F^2+\lambda_{\min}(R)\|Y\|_F^2\right),
\end{aligned}
\]
where we use $\lambda_{\min}(\cdot)$ to denote the smallest eigenvalue of a real symmetric matrix. As a result, we have
\[
\lambda_{\min}(Q)\|X\|_F^2+\lambda_{\min}(R)\|Y\|_F^2\leq \gamma \operatorname{tr}(X)\leq\gamma \sqrt{n_x}\|X\|_F\leq \Delta\sqrt{n_x}\|X\|_F,
\]
On the other hand, from $X\succeq \gamma^{-1}W$, we can obtain
\[
\begin{aligned}
\lambda_{\min}(X)\geq{} &
\lambda_{\min}(\gamma^{-1}W) \\
={} & \gamma^{-1}\lambda_{\min}(W)\geq\Delta^{-1}\lambda_{\min}(W).
\end{aligned}
\]
In addition, $X\succeq \gamma^{-1}W\succ 0$ also implies that $\|X\|_F\geq \gamma^{-1}\|W\|_F$, and thus 
\[
\gamma \geq \|W\|_F/\|X\|_F.
\]
By the arbitrariness of $(\gamma,Y,X)$, we see that $\setc*{(\gamma,Y,X)\in\mathfrak{F}_{\mathrm{cvx}}}{\gamma\leq\Delta}$ is a subset of
\[
\mathfrak{C}_\Delta\coloneqq\setv*{(\gamma,Y,X)\in \mathbb{R}_{++}\times
\mathbb{R}^{n_u\times n_x}\times\mathbbm{S}^{n_x}_{++}}
{\begin{aligned}
& \lambda_{\min}(Q)\|X\|_F^2+\lambda_{\min}(R)\|Y\|_F^2\leq \Delta\sqrt{n_x}\|X\|_F, \\
& \lambda_{\min}\big(X\big)\geq\Delta^{-1}\lambda_{\min}(W), \\
&\|W\|_F/\|X\|_F\leq \gamma\leq\Delta
\end{aligned}}.
\]
Since $Q,R$ and $W$ are all positive definite, we have $\lambda_{\min}(Q)>0$, $\lambda_{\min}(R)>0$ and $\|W\|_F\geq \lambda_{\min}(W)>0$. Then it is not hard to see that $\mathfrak{C}_\Delta$ is bounded and closed in $\mathbb{R}\times
\mathbb{R}^{n_u\times n_x}\times\mathbbm{S}^{n_x}$, and thus is compact. As a result, the set $\setc*{(\gamma,Y,X)\in\mathfrak{F}_{\mathrm{cvx}}}{\gamma\leq\Delta}$ has the following equivalent representation:
\[
\begin{aligned}
& \setc*{(\gamma,Y,X)\in\mathfrak{F}_{\mathrm{cvx}}}{\gamma\leq\Delta} \\
={} & \setc*{(\gamma,Y,X)\in\mathfrak{C}_\Delta}{\gamma\leq\Delta,\mathscr{M}_{\mathrm{cvx}}(\gamma,Y,X)\succeq 0} \\
={} & \setc*{(\gamma,Y,X)\in\mathfrak{C}_\Delta}{\gamma\leq\Delta}\cap\setc*{(\gamma,Y,X)\in\mathfrak{C}_\Delta}{\lambda_{\min}(\mathscr{M}_{\mathrm{cvx}}(\gamma,Y,X))\geq 0}.
\end{aligned}
\]
By the continuity of $(\gamma,Y,X)\mapsto \gamma$ and $(\gamma,Y,X)\mapsto\lambda_{\min}(\mathscr{M}_{\mathrm{cvx}}(\gamma,Y,X))$ over $(\gamma,Y,X)\in\mathfrak{C}_\Delta$, we see that both $\setc*{(\gamma,Y,X)\in\mathfrak{C}_\Delta}{\gamma\leq\Delta}$ and $\setc*{(\gamma,Y,X)\in\mathfrak{C}_\Delta}{\lambda_{\min}(\mathscr{M}_{\mathrm{cvx}}(\gamma,Y,X))\geq 0}$ are closed in $\mathfrak{C}_\Delta$, and so is their intersection. Thus $\setc*{(\gamma,Y,X)\in\mathfrak{F}_{\mathrm{cvx}}}{\gamma\leq\Delta}$ is closed in $\mathfrak{C}_\Delta$, and consequently is a compact set since $\mathfrak{C}_\Delta$ is compact.

\end{document}